\documentclass[11pt]{article}

\usepackage[letterpaper,
            bindingoffset=0.2in,
            left=1in,
            right=1in,
            top=1in,
            bottom=1in,
            footskip=.25in]{geometry}


\usepackage{epsfig}
\usepackage{graphicx}
\usepackage{amsbsy}
\usepackage{amsmath}
\usepackage{amsfonts}
\usepackage{amssymb}
\usepackage{textcomp}
\usepackage[hidelinks]{hyperref}
\usepackage{aliascnt}

\newcommand{\mcm}[3]{\newcommand{#1}[#2]{{\ensuremath{#3}}}} 

\mcm{\tuple}{1}{\langle #1 \rangle}
\mcm{\name}{1}{\ulcorner #1 \urcorner}
\mcm{\Nbb}{0}{\mathbb{N}}
\mcm{\Zbb}{0}{\mathbb{Z}}
\mcm{\Rbb}{0}{\mathbb{R}}
\mcm{\Cbb}{0}{\mathbb{C}}
\mcm{\Qbb}{0}{\mathbb{Q}}
\mcm{\Acal}{0}{\cal A}
\mcm{\Bcal}{0}{\cal B}
\mcm{\Ccal}{0}{\cal C}
\mcm{\Dcal}{0}{\cal D}
\mcm{\Ecal}{0}{\cal E}
\mcm{\Fcal}{0}{\cal F}
\mcm{\Gcal}{0}{\cal G}
\mcm{\Hcal}{0}{\cal H}
\mcm{\Ical}{0}{\cal I}
\mcm{\Jcal}{0}{\cal J}
\mcm{\Kcal}{0}{\cal K}
\mcm{\Lcal}{0}{\cal L}
\mcm{\Mcal}{0}{\cal M}
\mcm{\Ncal}{0}{\cal N}
\mcm{\Ocal}{0}{{\cal O}}
\mcm{\Pcal}{0}{{\cal P}}
\mcm{\Qcal}{0}{{\cal Q}}
\mcm{\Rcal}{0}{{\cal R}}
\mcm{\Scal}{0}{{\cal S}}
\mcm{\Tcal}{0}{{\cal T}}
\mcm{\Ucal}{0}{{\cal U}}
\mcm{\Vcal}{0}{{\cal V}}
\mcm{\Wcal}{0}{{\cal W}}
\mcm{\Xcal}{0}{{\cal X}}
\mcm{\Ycal}{0}{{\cal Y}}
\mcm{\Zcal}{0}{{\cal Z}}
\mcm{\Mfrak}{0}{\mathfrak M}

\mcm{\restric}{0}{\upharpoonright}
\mcm{\upset}{0}{\uparrow}
\mcm{\onto}{0}{\twoheadrightarrow}
\mcm{\smallNbb}{0}{{\small \mathbb{N}}}
\DeclareMathOperator{\preop}{op}
\mcm{\op}{0}{^{\preop}}

%
{\begin{array}{c}
\setlength{\unitlength}{1em}}%
{\end{array}}

\usepackage{amsthm}

\newcommand{\theoremize}[2]{\newaliascnt{#1}{thm} \newtheorem{#1}[#1]{#2} \aliascntresetthe{#1}}

\theoremstyle{plain}
\newtheorem{thm}{Theorem}[section]
\theoremize{lem}{Lemma}
\theoremize{skolem}{Skolem}
\theoremize{fact}{Fact}
\theoremize{sublem}{Sublemma}
\theoremize{claim}{Claim}
\theoremize{subclaim}{Subclaim}
\theoremize{obs}{Observation}
\theoremize{prop}{Proposition}
\theoremize{cor}{Corollary}
\theoremize{que}{Question}
\theoremize{oque}{Open Question}
\theoremize{con}{Conjecture}

\theoremstyle{definition}
\theoremize{dfn}{Definition}
\theoremize{rem}{Remark}
\theoremize{eg}{Example}
\theoremize{exercise}{Exercise}
\theoremstyle{plain}

\newenvironment{claimproof}{
  \pushQED{\qed}%
  \trivlist
  \item[\hskip\labelsep
        \itshape
    Proof of Claim.]\ignorespaces
}{%
  
  \popQED\endtrivlist
}

\usepackage{verbatim}
\usepackage{enumerate}
\usepackage{enumitem}
\usepackage[all]{xy}

\usepackage{subfig}

\usepackage{authblk}

\title{Shallow Hitting Edge Sets in Uniform Hypergraphs\thanks{The results presented here are based on the Master's thesis of the first author~\cite{Planken-msc}.}}

\author[1]{Tim Planken}
\author[2]{Torsten Ueckerdt}

\affil[1]{University of Birmingham\\
    \href{mailto:txp265@student.bham.ac.uk}{txp265@student.bham.ac.uk}
}
\affil[2]{Karlsruhe Institute of Technology\\
    \href{mailto:torsten.ueckerdt@kit.edu}{torsten.ueckerdt@kit.edu}
}

\date{}

\DeclareMathOperator{\degree}{deg}

\DeclareMathOperator{\incident}{Inc}
\DeclareMathOperator{\Prob}{Pr}

\newcommand{\euler}{\mathrm{e}}

\usepackage{xr}

\begin{document}

\maketitle

\begin{abstract}
    A subset $M$ of the edges of a graph or hypergraph is \emph{hitting} if $M$ covers each vertex of $H$ at least once, and $M$ is \emph{$t$-shallow} if it covers each vertex of $H$ at most $t$ times.
    We consider the existence of shallow hitting edge sets and the maximum size of shallow edge sets in $r$-uniform hypergraph $H$ that are regular or have a large minimum degree.
    Specifically, we show the following.
    \begin{itemize}
        \item Every $r$-uniform regular hypergraph has a $t$-shallow hitting edge set with $t = O(r)$.
        \item Every $r$-uniform regular hypergraph with $n$ vertices has a $t$-shallow edge set of size $\Omega(nt/r^{1+1/t})$.
        \item Every $r$-uniform hypergraph with $n$ vertices and minimum degree $\delta_{r-1}(H) \geq n/((r-1)t+1)$ has a $t$-shallow hitting edge set.
        \item Every $r$-uniform $r$-partite hypergraph with $n$ vertices in each part and minimum degree $\delta'_{r-1}(H) \geq n/((r-1)t+1) +1$ has a $t$-shallow hitting edge set.
    \end{itemize}
    We complement our results with constructions of $r$-uniform hypergraphs that show that most of our obtained bounds are best-possible.
\end{abstract}

\section{Introduction}

Two of the most fundamental results in graph theory are the necessary and sufficient conditions for the existence of perfect matchings in bipartite graphs due to Hall's ``marriage theorem''~\cite{Hall-1935} and in general graphs due to Tutte's ``odd component count''~\cite{Tutte-1950}.
However, the situation is less understood and also more complicated in hypergraphs.
For example, while we know how to detect perfect matchings in graphs in polynomial time~\cite{Edmonds-1965,MV-algorithm-1980} and in bipartite graphs even in near-linear time~\cite{near-linear-flow-2022}, the problem is NP-complete already in $3$-uniform $3$-partite hypergraphs~\cite{Karp-1972}, indicating that there is (in all likelihood) no nice necessary and sufficient criterium for perfect matchings in hypergraphs.

A simple application of Hall's marriage theorem shows that every regular (all vertices have the same degree) bipartite graph admits a perfect matching.
However, this does not hold for hypergraphs, as for example illustrated by the $2$-regular hypergraph in Figure~\ref{fig:3-graph} which admits no perfect matching.
For some integer $r \geq 2$, a hypergraph $H = (V,E)$ is $r$-uniform if every edge $e \in E$ consists of exactly $r$ vertices.
That is, $2$-uniform hypergraphs are just graphs, while the hypergraph in Figure~\ref{fig:3-graph} is $3$-uniform.
Moreover, $H = (V,E)$ is $r$-partite if its vertices can be partitioned into $r$ parts such that each edge contains exactly one vertex of each part.
That is, $2$-uniform $2$-partite hypergraphs are exactly the bipartite graphs, while the hypergraph in Figure~\ref{fig:3-graph} is $3$-partite with the a $3$-partition indicated by the shapes of the vertices.
Evidently, as soon as $r \geq 3$, not all regular $r$-uniform $r$-partite hypergraphs admit perfect matchings.

\begin{figure}
    \centering
    \includegraphics{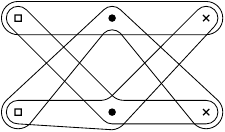}
    \caption{A $2$-regular $3$-uniform $3$-partite hypergraph with no perfect matching.}
    \label{fig:3-graph}
\end{figure}

We are concerned with a relaxation of perfect matchings, called shallow hitting edge sets.
While $M$ is a perfect matching if every vertex has exactly one incident edge in $M$, an edge subset $M$ is a \emph{$t$-shallow hitting edge set} for a positive integer $t$ if every vertex has at least one incident edge in $M$ (this is the ``hitting'' part) and at most $t$ incident edges in $M$ (this is the ``$t$-shallow'' part).
For example, any set of three or four edges in the hypergraph $H$ in Figure~\ref{fig:3-graph} is a $2$-shallow hitting edge set of $H$.
In this paper, we shall present sufficient conditions for $t$ and $r$ for which every regular $r$-uniform hypergraph admits a $t$-shallow hitting edge set.

Of course, if a graph or hypergraph admits no perfect matching, it is interesting to prove lower bounds on the size of its largest matchings, for example for regular graphs~\cite{HenYeo-max-matching-2007}.
In this paper, we shall present for regular $n$-vertex $r$-uniform hypergraphs $H$ lower bounds for the size of their maximum $t$-shallow edge sets in terms of $n$, $r$ and $t$.

Another simple application of Hall's marriage theorem shows that every bipartite graph with $n$ vertices in each part and minimum degree at least $n/2$ admits a perfect matching.
R\"odl, Ruci\'nski and Szemer\'edi~\cite{rodl2009perfect}, see also K\"uhn and Osthus~\cite{kuhnmatchings}, extend this by showing that every $n$-vertex $r$-uniform hypergraph $H$ with large enough $\delta_{r-1}(H)$ in terms of $n$ and $r$ (see Section~\ref{sec:minimum-degree-conditions} for the definition) admits a perfect matching.
Similarly, every $r$-uniform $r$-partite hypergraph with parts of size $n$ and large enough $\delta'_{r-1}(H)$ in terms of $n$ and $r$ admits a perfect matching~\cite{kuhnmatchings,aharoniperfectmatchings}.
In this paper, we extend these results to $t$-shallow hitting edge sets.

\subsection{Our results and organization of the paper}

All hypergraphs considered in this paper are uniform and finite, but may have parallel edges.
We give the formal definitions of terms such as $r$-uniform, $r$-partite, girth, near-regular, $t$-shallow edge sets, hitting edge sets, etc.~in Section~\ref{subsec:preliminaries}.

\medskip

In Section~\ref{sec:shallow-hitting} we investigate the existence of \textbf{shallow hitting edge sets in regular uniform hypergraphs}.
We seek to prove that every $r$-uniform regular hypergraph admits a $t$-shallow hitting edge set with $t$ being as small as possible with respect to $r$.
On the positive side, Theorem~\ref{theorem:upper-bound-uniform-regular-hypergraph} in Section~\ref{subsec:upper-bound-general} gives that every $r$-uniform regular hypergraph has a $t$-shallow hitting edge set with $t = O(r)$.
In Theorem~\ref{thm:upper-bound-high-girth} in Section~\ref{subsec:upper-bound-girth} we improve the upper bound on $t$ to $t = O(\ln r)$ for the special case of $r$-uniform regular hypergraphs whose girth is at least four.
We complement this with lower bounds in Section~\ref{subsec:lower-bound-shallow-hitting}.
We present in Theorem~\ref{theorem:01-truncated-projective-space} a construction of an $r$-uniform regular hypergraph that admits $t$-shallow hitting edge sets only for $t = \Omega(\log r)$.

In fact, we prove our upper bounds in Sections~\ref{subsec:upper-bound-general} and~\ref{subsec:upper-bound-girth} for the more general case of $r$-uniform near-regular hypergraphs.
In a near-regular hypergraph we only require that the quotient $\Delta/\delta$ of the maximum degree over the minimum degree is bounded by some constant $\mu = O(1)$.
On the other hand, the lower bounds in Section~\ref{subsec:lower-bound-shallow-hitting} are provided in the more restricted case of $r$-uniform regular hypergraphs, and which are additionally restricted to be $r$-partite.

\medskip
In Section~\ref{sec:maximum-shallow-edge-sets} we investigate the \textbf{maximum size of shallow edge sets in regular uniform hypergraphs}.
Again, we consider $r$-uniform regular hypergraphs $H = (V,E)$.
But here we seek to find $t$-shallow edge sets $M \subseteq E$ that are not necessarily hitting.
Of course, $M = \emptyset$ is always a valid choice, while the challenge here is to have $|M|$ as large as possible in terms of $t$, $r$ and $n = |V|$.
On the positive side, Theorem~\ref{theorem:03-01-lower-bound} in Section~\ref{subsec:03-01-lower-bound} gives that every $r$-uniform regular hypergraph with $n$ vertices admits a $t$-shallow edge set $M$ of size
\[
    |M| = \Omega\left(\frac{nt}{r^{1+1/t}}\right).
\]
We complement this with an asymptotically matching upper bound in Section~\ref{subsec:04-03-combinatorial-designs}.
We present in Theorem~\ref{theorem:04-03-upper-bound} a construction of $r$-uniform regular hypergraphs with $n$ vertices that admit $t$-shallow edge sets $M$ only of size 
\[
    |M| = O\left(\frac{nt}{r^{1+1/t}}\right).
\]
In fact, we again prove our lower bound in Section~\ref{subsec:03-01-lower-bound} for the more general case of near-regular hypergraphs, while our upper bound in Section~\ref{subsec:04-03-combinatorial-designs} is provided in the more restricted case of regular $r$-uniform $r$-partite hypergraphs.

\medskip
In Section~\ref{sec:minimum-degree-conditions} we investigate the existence of \textbf{shallow hitting edge sets in uniform hypergraphs of large minimum degree}.
The famous theorem of Dirac~\cite{dirac1952some} states that every graph on $n$ vertices with minimum degree at least $n/2$ contains a Hamiltonian cycle, while some graphs with minimum degree less than $n/2$ do not.
We consider the Dirac-type problem of finding a minimum degree condition for uniform hypergraphs to contain a shallow hitting edge set.
We focus on two of the several possibilities to define the minimum degree of $r$-uniform hypergraphs with $r > 2$.

In Section~\ref{subsec:uniform-hypergraphs} we consider $r$-uniform hypergraphs $H$ and define $\delta_{r-1}(H)$ to be the smallest integer $d$ such that every $(r-1)$-set of vertices of $H$ forms an edge with $d$ distinct vertices in $H$.
We prove in Theorem~\ref{theorem:02-09-large-degree-uniform-hypergraph} that every $r$-uniform hypergraph $H$ with $n = |V|$ vertices has a $t$-shallow hitting edge set, $t \geq 2$, provided
\[
    \delta_{r-1}(H) \geq \frac{n}{(r-1)t+1}.
\]
We complement this with a construction in Theorem~\ref{theorem:02-09-uniform-lower-bound} of $r$-uniform hypergraphs with $n$ vertices and
\[
    \delta_{r-1}(H) \geq \frac{n}{(r-1)t+1}-1
\]
that have no $t$-shallow hitting edge sets.

In Section~\ref{subsec:uniform-partite-hypergraphs} we consider $r$-uniform $r$-partite hypergraphs $H$ and define $\delta'_{r-1}(H)$ to be the smallest integer $d$ such that every $(r-1)$-set of vertices from pairwise distinct parts of $H$ forms an edge with $d$ distinct vertices of the remaining part in $H$.
We prove in Theorem~\ref{theorem:02-09-upper-bound} that every $r$-uniform $r$-partite hypergraph with parts of size $n$ has a $t$-shallow hitting edge set, $t \geq 2$, provided
\[
    \delta'_{r-1}(H) \geq \frac{n}{(r-1)t+1} + 1.
\]
We complement this with a construction in Theorem~\ref{theorem:02-09-lower-bound} of $r$-uniform $r$-partite hypergraphs with parts of size $n$ and
\[
    \delta'_{r-1}(H) \geq \frac{n}{(r-1)t+1}-1
\]
that have no $t$-shallow hitting edge sets.

\medskip
We end this paper with a brief discussion in Section~\ref{sec:conclusions}.

\subsection{Related work}
\label{subsec:related-work}

Let us just briefly mention some further related work on matchings in hypergraphs.
Until now, there is no generalization of Hall’s Theorem or Tutte’s Theorem to hypergraphs.
There is an equivalence for the existence of perfect matchings in so-called balanced hypergraphs, see~\cite{CCKV-balanced-1996} and~\cite{HucTri-balanced-2002}.
On the other hand, there are two ways to extend the sufficient condition of Hall’s Theorem to uniform bipartite hypergraphs (here, a hypergraph $H=(V,E)$ is bipartite if there exists a partition $V = A\dot\cup B$ such that every edge has exactly one vertex in $A$).
Haxell~\cite{Haxell-1995} gave a sufficient condition in terms of minimum vertex covers while Aharoni and Haxell~\cite{AhaHax-2000} gave a sufficient condition in terms of maximum matchings. 
Both theorems are equivalent to Hall’s Theorem in the case of bipartite graphs, but do not give a necessary condition in general.

The concept of $t$-shallow hitting edge sets has been considered for graphs implicitly under several names.
Specifically, there is a lot of research on subgraphs $G'$ of a given graph $G$ such that for each vertex $v$ its number of incident edges in $G'$ lies in some predefined set $S_v$, as introduced by Lov\'asz~\cite{Lovasz1972_Factorization}.
(It would be $S_v = \{1,\ldots,t\}$ for $t$-shallow hitting edge sets.)
Let us just mention that Berge and Las Vergnas~\cite{berge1978existence} give a Hall-type characterization for the case of bipartite graphs, as well as a Tutte-type characterization for general graphs.

Finally, hitting edge sets of a hypergraph $H$ are equivalent to ``hitting vertex sets'' of the dual hypergraph $H^*$ of $H$.
Such hitting vertex sets are also known as vertex covers, transversals, or hitting sets in the literature.
Shallow hitting (vertex) sets are for example a standard tool for coloring geometric hypergraphs~\cite{KesDom-shallow-definition-2019}.

\subsection{Preliminaries}
\label{subsec:preliminaries}

A hypergraph is a tuple $H=(V,E)$ consisting of a finite set $V$ of vertices and a finite multiset $E$ of edges\footnote{Sometimes $e$ is called a \emph{hyper}edge if $|e| \geq 3$, but let us call $e$ simply an \emph{edge}, independent of its size $|e|$.}, where each edge $e \in E$ is a subset of $V$.
For a vertex $v \in V$, we denote the set of edges incident to $v$ in $H$ by $\incident(v)$.
For $M \subseteq E$ and $v \in V$, the \emph{degree $\degree_M(v)$ of $v$ in $M$} is the number of incident edges at $v$ that are in $M$, i.e., $\degree_M(v)=|\incident(v) \cap M|$.
The \emph{degree $\degree(v)$ of $v$} is $\degree(v)=\degree_E(v) = |\incident(v)|$.

The maximum (respectively minimum) degree of $H$ is the largest (respectively smallest) degree of a vertex in $H$.
A hypergraph $H$ with maximum degree $\Delta$ and minimum degree $\delta$ is \emph{regular} if $\Delta = \delta$, and \emph{$\mu$-near-regular} for a real number $\mu \in \Rbb_{\geq 0}$ if $\delta > 0$ and $\Delta / \delta \leq \mu$.
Thus, a hypergraph is $1$-near-regular if and only if it is regular and has minimum degree at least $1$.
Of course, every hypergraph with minimum degree $\delta > 0$ is $\mu$-near-regular for every large enough $\mu$.
When we later consider ``near-regular hypergraphs'', this refers to the class of all $\mu$-near-regular hypergraphs for some fixed $\mu \in \Rbb_{\geq 0}$.

For an integer $r \geq 1$, a hypergraph $H = (V,E)$ is \emph{$r$-uniform} if we have $|e| = r$ for each $e \in E$.
That is, $2$-uniform hypergraphs are graphs (with no loops but possibly parallel edges).
In the literature, $r$-uniform hypergraphs are sometimes simply called \emph{$r$-graphs}.

A hypergraph $H$ is called \emph{$r$-partite} if there exists a partition $V_1,\ldots,V_r$ of $V$ into $r$ pairwise disjoint parts such that for each $e \in E$ and $i \in \{1,\ldots,r\}$ we have $|e \cap V_i| \leq 1$.
Whenever we consider an $r$-partite hypergraph $H = (V,E)$, we fix one such partition $V = V_1 \dot\cup \cdots \dot\cup V_r$ and call each $V_i$, $i=1,\ldots,r$, a \emph{part} of $H$.
The smallest $r$ for which $H$ is $r$-partite is the \emph{chromatic number} of $H$, but we shall not use this notion here.
Arguably, $r$-uniform $r$-partite hypergraphs are the most natural generalization of bipartite graphs to larger uniformity.

We say that $H^*=(V^*,E^*)$ is the \emph{dual hypergraph} of $H$ if $V^*=E$ is the set of vertices and $E^* = \{\incident(v) \mid v \in V\}$ is the set of edges.

\smallskip

Let $H = (V,E)$ be a hypergraph.
For an edge set $M \subseteq E$, a vertex $v \in V$ is said to be \emph{covered by $M$} if $\degree_M(v) \geq 1$.
An edge set $M \subseteq E$ is \emph{hitting} if every vertex in $H$ is covered by $M$.
For an integer $t \geq 0$, an edge set $M \subseteq E$ is \emph{$t$-shallow} if $\degree_M(v) \leq t$ for each vertex $v$.

Finally, recall that $o_r(1)$ denotes the class of all functions $f \colon \Rbb_{\geq 0} \to \Rbb_{\geq 0}$ for which $f(r) \to 0$ as $r \to \infty$.
At several places in this paper we seek to describe the asymptotic behavior of a function in terms of \emph{two} parameters: $r$ and $\mu$.
To this end, let $o_{\mu r}(1)$ be the class of all functions $f \colon \Rbb_{\geq 0}^2 \to \Rbb_{\geq 0}$ for which $f(\mu,r) \to 0$ whenever $\mu r \to \infty$.

\section{Shallow Hitting Edge Sets in Uniform Near-Regular Hypergraphs}
\label{sec:shallow-hitting}

\subsection{Upper bound for the general case}
\label{subsec:upper-bound-general}

Let $A_1, \dots, A_n$ be events.
A graph $G=(V,E)$ is a \emph{dependency graph} of these events if $V=\{A_1, \dots, A_n\}$ and the following holds for all $i$.
For each event $A_i$ and each set $S$ of non-adjacent events with $A_i \notin S$ it holds that $\Prob\left[A_i \mid \bigwedge_{A_j \in S} \overline A_j\right] \leq \Prob[A_i]$.
The \emph{degree of dependence} of the events $A_1, \dots, A_n$ is the smallest possible maximum degree of a dependency graph.
If each ``bad'' event $A_i$ only occurs with small probability and each bad event is dependent of a small amount of other bad events, the Lovász Local Lemma is a tool to prove that it is possible that no bad event occurs.

\begin{lem}[Lovász Local Lemma~\cite{spencer1977asymptotic}]
    \label{lem:lovasz-local-lemma}
    Let $A_1, \dots, A_n$ be events with $\Prob[A_i] \leq p$ and let $d$ be their degree of dependence.
    If $\euler p (d + 1) \leq 1$, then $\Prob[\overline A_1 \overline A_2 \cdots \overline A_n] > 0$.
\end{lem}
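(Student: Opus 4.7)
The plan is to prove the stronger asymmetric form of the Lovász Local Lemma and then specialize it. The asymmetric form asserts that if there exist reals $x_1, \dots, x_n \in [0,1)$ with $\Prob[A_i] \leq x_i \prod_{A_j \sim A_i}(1-x_j)$ for every $i$ (where $\sim$ denotes adjacency in a dependency graph of $A_1,\dots,A_n$), then
\[
    \Prob\left[\overline A_1 \overline A_2 \cdots \overline A_n\right] \geq \prod_{i=1}^{n}(1-x_i) > 0.
\]
Given this, the theorem follows via the chain rule $\Prob[\bigwedge_i \overline A_i] = \prod_i \Prob[\overline A_i \mid \bigwedge_{j<i}\overline A_j]$ once each factor is bounded below by $1-x_i$.

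The heart of the proof is the inductive claim that for every $i$ and every $S \subseteq \{A_1,\dots,A_n\}$ with $A_i \notin S$,
\[
    \Prob\left[A_i \,\Big|\, \bigwedge_{A_j \in S} \overline A_j\right] \leq x_i,
\]
proved by induction on $|S|$. The base case is immediate from $\Prob[A_i]\leq x_i$. For the inductive step I would split $S = S_1 \dot\cup S_2$ into the events of $S$ adjacent to $A_i$ (in $S_1$) and non-adjacent to $A_i$ (in $S_2$), and express the conditional probability as the ratio
\[
    \frac{\Prob\left[A_i \wedge \bigwedge_{A_j \in S_1}\overline A_j \,\Big|\, \bigwedge_{A_j \in S_2}\overline A_j\right]}{\Prob\left[\bigwedge_{A_j \in S_1}\overline A_j \,\Big|\, \bigwedge_{A_j \in S_2}\overline A_j\right]}.
\]
The numerator is at most $\Prob[A_i \mid \bigwedge_{A_j \in S_2}\overline A_j] \leq \Prob[A_i] \leq x_i \prod_{A_j \sim A_i}(1-x_j)$, where the middle inequality is exactly the ``lopsidependency'' condition from the excerpt's definition of dependency graph, applied to the non-adjacent set $S_2$. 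The denominator is bounded below by $\prod_{A_j \in S_1}(1-x_j)$ by enumerating $S_1$ arbitrarily, chaining via the chain rule, and invoking the induction hypothesis on each factor (each intermediate conditioning set has strictly fewer than $|S|$ elements). Taking the ratio yields the claimed bound.

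To deduce the symmetric statement, set $x_i = 1/(d+1)$ for all $i$. Since each $A_i$ has at most $d$ neighbors in the dependency graph,
\[
    x_i \prod_{A_j \sim A_i}(1-x_j) \geq \frac{1}{d+1}\left(1 - \frac{1}{d+1}\right)^d \geq \frac{1}{\euler(d+1)} \geq p \geq \Prob[A_i],
\]
using the elementary bound $(1+1/d)^d \leq \euler$, equivalently $(1-1/(d+1))^d \geq 1/\euler$, together with the hypothesis $\euler p(d+1) \leq 1$. The asymmetric form then delivers the desired conclusion.

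The main obstacle is the denominator bound in the inductive step: one has to carefully chain conditional probabilities across $S_1$ and verify that at each stage the conditioning set (the remaining members of $S_1$ together with all of $S_2$, minus the event being extracted) has fewer than $|S|$ elements, so that the induction hypothesis applies to bound each $\Prob[B_l \mid \cdots]$ from above by its corresponding $x$-value. This is precisely the reason one must strengthen the statement to be parametrized by an arbitrary conditioning set $S$ rather than attempting a direct argument on the original events.
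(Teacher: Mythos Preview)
The paper does not supply its own proof of this lemma; it is stated with a citation to Spencer~\cite{spencer1977asymptotic}, and Remark~\ref{rem:weak-dependency-graph} points to the proof in Alon--Spencer~\cite{alon2016probabilistic} for the version with the weaker (lopsided) dependency-graph condition used here. Your argument is precisely that standard proof: the asymmetric form proved by induction on $|S|$, with the key step being the numerator/denominator split where the lopsidependency hypothesis is invoked on the non-adjacent set $S_2$, followed by the specialization $x_i = 1/(d+1)$ and the inequality $(1-1/(d+1))^d \geq 1/\euler$. The proof is correct and is essentially the one the paper defers to.
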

\begin{rem}
\label{rem:weak-dependency-graph}
    In the original Lovász Local Lemma, the notion of a dependency graph is replaced by a slightly stronger definition in which each event $A_i$ must be mutually independent of all non-adjacent events $A_j$ with $i \neq j$.
    It can be seen by the proof in~\cite{alon2016probabilistic} that the Lovász Local Lemma remains true if we use the weaker notion of a dependency graph as defined above.
    In Lemma~\ref{lemma:upper-bound-uniform-regular-hypergraph}, we will use the original version of the Lovász Local Lemma (which is included as a special case in Lemma~\ref{lem:lovasz-local-lemma}).
    In Lemma~\ref{lemma:upper-bound-high-girth}, we need the weaker notion of a dependency graph and therefore the slightly stronger Lovász Local Lemma as in Lemma~\ref{lem:lovasz-local-lemma}.
\end{rem}

In the next lemma, we want to find shallow hitting edge sets in uniform near-regular hypergraphs.
For this, we use a random experiment that generates a hitting edge set $M$.
Then, we only need to ensure the condition that $M$ is also shallow which will be done using the Lovász Local Lemma.

\begin{lem}
    \label{lemma:upper-bound-uniform-regular-hypergraph}
    Let $H=(V,E)$ be an $r$-uniform $\mu$-near-regular hypergraph and let $t$ be a positive integer satisfying
    \begin{equation}
        \label{eq:01-linear-upper-bound}
        \frac{t!}{t+1} \geq \euler \mu^{t+1} r^{t+3}~.
    \end{equation}
    Then, $H$ has a $t$-shallow hitting edge set.
\end{lem}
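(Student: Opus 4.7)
The plan is to build $M$ by a random experiment which is hitting by construction, and then to invoke the Lov\'asz Local Lemma in its mutual-independence form (i.e.\ \autoref{lem:lovasz-local-lemma} applied with a standard dependency graph, as indicated in \autoref{rem:weak-dependency-graph}) to force $M$ to also be $t$-shallow.

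The experiment I would use is the obvious one: for each $v \in V$, independently and uniformly pick an incident edge $e_v \in \incident(v)$, and set $M = \{e_v : v \in V\}$. Since $v \in e_v$, the set $M$ hits every vertex, so the only remaining task is to ensure $\degree_M(v) \leq t$ for all $v$. Accordingly, for each $v$ introduce the bad event $A_v = \{\degree_M(v) > t\}$ and try to show via LLL that $\Prob\left[\bigcap_{v \in V} \overline{A_v}\right] > 0$.

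For the probability of $A_v$, the key inequality is $\degree_M(v) \leq Y_v := |\{u \in V : v \in e_u\}|$, because every edge in $M \cap \incident(v)$ is of the form $e_u$ with $u \in e_u$, forcing $v \in e_u$. The random variable $Y_v = \sum_u Z_u$ with $Z_u = \mathbf{1}[v \in e_u]$ is a sum of independent Bernoullis, and using $\mu$-near-regularity together with $\sum_{u \neq v} k_{uv} = \degree(v)(r-1)$ (where $k_{uv}$ is the number of edges through $u$ and $v$) gives $\mathbb{E}[Y_v] \leq 1 + \mu(r-1) \leq \mu r$. The standard elementary-symmetric / Maclaurin bound $\Prob[\sum Z_u \geq k] \leq (\sum \mathbb{E}[Z_u])^k / k!$ then yields
\[
    \Prob[A_v] \;\leq\; \frac{(\mu r)^{t+1}}{(t+1)!}.
\]

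For mutual independence, $A_v$ is a function only of the random choices $\{e_u : u \in N[v]\}$, where $N[v] = \bigcup_{e \ni v} e$. Hence $A_v$ is mutually independent of any family of $A_{v'}$'s with $N[v] \cap N[v'] = \emptyset$, so the dependency degree is at most the number of vertices at distance $\leq 2$ from $v$ in the $2$-section of $H$, which $r$-uniformity together with $\mu$-near-regularity controls in terms of $\mu$ and $r$. Substituting both estimates into the LLL inequality $\euler \Prob[A_v] (d + 1) \leq 1$ and tidying up gives exactly the hypothesis $\frac{t!}{t+1} \geq \euler \mu^{t+1} r^{t+3}$, so no bad event occurs with positive probability and the desired $M$ exists. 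The step I expect to be the main obstacle is this dependency-degree count: a careless bound brings in the maximum degree $\Delta = \mu \delta$ of $H$, and one has to argue carefully that the second-neighbourhood factor ultimately collapses to a constant power of $r$, which is exactly what fixes the shape $r^{t+3}$ appearing in the hypothesis.
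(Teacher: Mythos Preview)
Your random experiment and the probability estimate $\Prob[A_v] \leq (\mu r)^{t+1}/(t+1)!$ are both fine, but the dependency step contains a genuine gap that cannot be closed with the bad events as you have defined them. The event $A_v$ is a function of the choices $\{e_u : u \in N[v]\}$, so your dependency graph joins $A_v$ to $A_{v'}$ whenever $N[v] \cap N[v']\neq\emptyset$; the degree of this graph is the size of the second neighbourhood of $v$ in the $2$-section, which in general is $\Theta((\Delta r)^2)$ (for instance whenever $H$ has girth at least five). This does \emph{not} collapse to a power of $r$: an $r$-uniform $d$-regular hypergraph has $\mu=1$ for every $d$, yet the dependency degree is of order $d^2 r^2$. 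Plugging into $\euler\, p(d+1)\leq 1$ would force $(t+1)! \geq \euler\, r^{t+3}\cdot \Theta(\Delta^2)$, strictly stronger than the hypothesis and false once $\Delta$ is large. In short, your probability bound has already absorbed all the $\delta$-dependence into the factor $\mu^{t+1}$, leaving nothing to offset the $\Delta^2$ coming from the second neighbourhood.

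The paper's proof uses the same random experiment but a finer family of bad events: one event $A_F=\{F\subseteq M\}$ for every $(t+1)$-subset $F$ of edges through a common vertex. The point is that now $\Prob[A_F]\leq (r/\delta)^{t+1}$ carries an explicit $\delta^{-(t+1)}$, while counting the $(t+1)$-subsets $F'$ with $V(F)\cap V(F')\neq\emptyset$ yields a dependency degree of order $\frac{t+1}{t!}\,r^2\Delta^{t+1}$. The product $\euler\, p(d+1)$ then contains $(\Delta/\delta)^{t+1}=\mu^{t+1}$, and the remaining factors give exactly $\euler\,\frac{t+1}{t!}\,\mu^{t+1} r^{t+3}$, matching the hypothesis. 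The essential trick you are missing is to index bad events by edge tuples rather than vertices, so that the unavoidable $\Delta$-growth in the dependency degree is cancelled by a matching $\delta^{-1}$-decay in the probability.
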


\begin{proof}
    Let $\Delta$ be the maximum degree and $\delta$ be the minimum degree of $H$.
    We build an edge set $M \subseteq E$ with the following random experiment.
    For each vertex $v \in V$, we pick an incident edge $e \in \incident(v)$ uniformly at random and add it to the edge set $M$.
    Clearly, $M$ is a hitting edge set.
    Observe that for $t \geq \Delta$, $M$ is clearly $t$-shallow.
    Thereby, assume that $t < \Delta$.
    We use the Lovász Local Lemma to prove that there exists an edge set $M$ that also satisfies $\degree_M(v) \leq t$ for all vertices $v \in V$.
    For a set $F$ of edges, denote by $V(F)$ the set of vertices $\bigcup_{e \in F} \, e$.
    
    Define the set $\mathcal{F}$ to be the set of all edge sets $F \subseteq E$ of size $t+1$ such that there exists a vertex $v \in V$ with $F \subseteq \incident(v)$, i.e.,
    \begin{equation*}
        \mathcal{F} = \{F \subseteq E \colon |F|=t+1, F \subseteq \incident(v) \text{ for some }  v \in V\}~.
    \end{equation*}
    For a set $F \in \mathcal{F}$, we denote the event that $F \subseteq M$ by $A_F$.
    These are the bad events used in the Lovász Local Lemma.
    
    To apply the Lovász Local Lemma, we have to find a bound $p$ such that $\Prob[A_F] \leq p$ holds for all $F \in \mathcal{F}$.
    For a vertex $v \in V$, we denote by $P_v$ the random variable with range $\incident(v)$ that describes which edge is picked at the vertex $v$.
    Moreover, for $e \in E$ and $v \in e$, we define $B_{e,v}$ to be the event that $P_v = e$ and $B_e$ to be the event that $e \in M$.
    Since every vertex has degree at least $\delta$, it holds that $\Prob[B_{e,v}] = \Prob[P_v = e] \leq 1/\delta$ for all $e \in E$ and $v \in e$.
    For an edge $e \in E$, we can now bound the probability for the event $B_e$ by
    \begin{equation*}
        \Prob[B_e] 
        = \Prob\left[\bigvee_{v \in e} B_{e,v}\right] 
        \leq \sum_{v \in e} \Prob[B_{e,v}] 
        \leq \frac{r}{\delta}~.
    \end{equation*}
    For $e \in E$ and $F \subseteq E$ with $e \notin F$, it holds that $\Prob[B_{e} \mid \cap_{e' \in F} B_{e'}] \leq \Prob[B_{e}]$ and hence $\Prob[B_{e} \cap \left(\cap_{e' \in F} B_{e'}\right)]\leq\Prob[B_{e}] \cdot \Prob[\cap_{e' \in F} B_{e'}]$.
    Thus, for each $F \in \mathcal{F}$, the probability that the event $A_F$ occurs can be bounded by
    \begin{equation*}
        \Prob[A_F]
        = \Prob\left[ \bigwedge_{e \in F} B_e \right]
        \leq \prod_{e \in F} \Prob[B_e]
        \leq \left( \frac{r}{\delta} \right)^{t+1}~.
    \end{equation*}
    
    We construct a dependency graph $G_\text{D}=(V_\text{D}, E_\text{D})$ with $V_\text{D}=\{A_F \mid F \in \mathcal{F}\}$.
    Two events $A_F$ and $A_{F'}$ are adjacent in $G_\text{D}$ if and only if $V(F) \cap V(F') \neq \emptyset$.
    Note that the event $B_e$ is mutually independent of all events $B_{e'}$ with $e \cap e' = \emptyset$.
    Thus, each event $A_F$ for $F \in \mathcal{F}$ is mutually independent of all non-adjacent events $A_{F'}$.
    
    In the next step, we have to bound the degree of dependence of these events.
    For an edge $e \in E$ and a vertex $v \in e$, denote by $\mathcal{F}_{e,v} \subseteq \mathcal{F}$ the set of edge sets $F \in \mathcal{F}$ with $e \in F$ and $F \subseteq \incident(v)$.
    We bound the size of the sets $\mathcal{F}_{e,v}$.
    The edge $e$ is contained in all sets $F \in \mathcal{F}_{e,v}$ and the vertex $v$ has degree at most $\Delta$.
    Since every edge set $F \in \mathcal{F}$ has size exactly $t+1$, there are at most $\binom{\Delta - 1}{t}$ ways to choose the remaining $t$ edges (which must be incident with $v$).
    Thus,
    \begin{equation*}
        |\mathcal{F}_{e,v}| \leq \binom{\Delta - 1}{t} \quad \text{for all } e \in E \text{ and } v \in e~.
    \end{equation*}
    Let $F \in \mathcal{F}$ be an arbitrary but fixed set of edges.
    In order to bound the degree of dependence, we count the number of edge sets $F' \in \mathcal{F}$ with $V(F) \cap V(F') \neq \emptyset$.
    Let $F' \in \mathcal{F}$ be an arbitrary edge set with $A_{F'}$ adjacent to $A_F$ in the dependency graph $G_\text{D}$. 
    Since $V(F) \cap V(F') \neq \emptyset$, there must exist a vertex $\tilde v$ in $V(F) \cap V(F')$ with an edge $e'$ in $F'$ which is incident to $\tilde v$.
    By definition of $\mathcal{F}$, there is a vertex $v' \in V$ with $F \subseteq \incident(v')$.
    As $e' \in F'$, this vertex must be incident to $e'$, i.e., $v' \in e'$.
    Thus, $F'$ is in the set $\mathcal{F}_{e', v'}$.
    Since the set $F$ itself fulfills all the described properties, it is also in a set $\mathcal{F}_{e,v}$ and we count it too.
    We can now bound the degree of dependence $d$ by
    \begin{equation*}
        d+1 \leq \max_{F \in \mathcal F}\left|\bigcup\limits_{\tilde v \in V(F)} \bigcup\limits_{e' \in \incident(\tilde v)} \bigcup\limits_{v' \in e'} \mathcal{F}_{e',v'}\right|
        \leq (t+1)r \cdot \Delta \cdot r \cdot \binom{\Delta - 1}{t}~.
    \end{equation*}
    Since we assumed $\Delta - 1 \geq t$, it holds that
    \begin{equation*}
        \binom{\Delta - 1}{t} \leq \frac{(\Delta - 1)^{t}}{t!} \leq \frac{\Delta^{t}}{t!}
    \end{equation*}
    and thus
    \begin{equation}
        d + 1 \leq \frac{t+1}{t!} r^2 \Delta^{t+1}~.
    \end{equation}
    To apply the Lovász Local Lemma, we calculate $\euler p (d 
 + 1)$.
    With Equation~\eqref{eq:01-linear-upper-bound}, it follows that
    \begin{equation*}
        \euler p (d+1) \leq \euler \cdot \left( \frac{r}{\delta}\right)^{t+1} \cdot \frac{t+1}{t!} r^2 \Delta^{t+1}
        \leq \euler \frac{t+1}{t!} \mu^{t+1} r^{t+3} 
        \leq 1~.
    \end{equation*}
    
    By the Lovász Local Lemma, the probability that no event $A_F$ with $F \in \mathcal{F}$ occurs is greater than zero.
    Thus, there exists an edge set $M \subseteq E$ such that every vertex has degree $1 \leq \degree_M(v) \leq t$ in $M$.
\end{proof}

\begin{thm}
    \label{theorem:upper-bound-uniform-regular-hypergraph}
    Every $r$-uniform $\mu$-near-regular hypergraph has a $t(\mu, r)$-shallow hitting edge set, where $t(\mu, r) = \euler \mu r (1+o_{\mu r}(1))$.
\end{thm}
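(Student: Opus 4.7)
The theorem follows by applying Lemma~\ref{lemma:upper-bound-uniform-regular-hypergraph} with an appropriate integer $t = t(\mu, r)$, so the task is to exhibit such a $t$ of the claimed size that satisfies inequality~\eqref{eq:01-linear-upper-bound}. My plan is to set $t := \lceil \euler\mu r(1+\alpha)\rceil$ for a suitable correction $\alpha = \alpha(\mu, r)$ and verify~\eqref{eq:01-linear-upper-bound} with Stirling.

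Using the standard bound $t! \geq (t/\euler)^t$, inequality~\eqref{eq:01-linear-upper-bound} is implied by
\[
\left(\frac{t}{\euler\mu r}\right)^{t} \;\geq\; \euler \,\mu\, r^3 (t+1).
\]
Substituting $t = \euler\mu r(1+\alpha)$ and taking logarithms reduces this to
\[
\euler \mu r (1+\alpha)\log(1+\alpha) \;\geq\; \log\bigl(\euler \mu r^3(t+1)\bigr) \;=\; O(\log(\mu r)).
\]
Since $\log(1+\alpha) \geq \alpha/2$ on $\alpha \in [0,1]$, this is satisfied as soon as $\alpha \geq C \log(\mu r)/(\mu r)$ for some absolute constant $C$. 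Whenever $\mu r$ is large enough that this lower bound is at most $1$, we may set $\alpha := C\log(\mu r)/(\mu r) \in o_{\mu r}(1)$ and conclude that $t = \euler\mu r(1 + o_{\mu r}(1))$ satisfies~\eqref{eq:01-linear-upper-bound}.

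For the remaining bounded values of $\mu r$---where both $\mu$ and $r$ are bounded, since $\mu, r \geq 1$---inequality~\eqref{eq:01-linear-upper-bound} is satisfied by some uniformly bounded integer $t = t(\mu, r)$, which contributes only a bounded quantity to $t/(\euler\mu r)$ and therefore is automatically compatible with the asymptotic form $\euler\mu r(1+o_{\mu r}(1))$, since that notation only constrains behavior as $\mu r \to \infty$. Combining the two regimes produces the required function $t(\mu,r)$, and Lemma~\ref{lemma:upper-bound-uniform-regular-hypergraph} supplies the corresponding $t$-shallow hitting edge set. The only step that requires care is the algebraic rearrangement after Stirling to expose the leading factor $\euler\mu r$; once that is done, bounding the error term $\alpha$ is routine calculus and presents no real obstacle.
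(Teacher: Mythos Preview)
Your argument is correct and follows essentially the same route as the paper: apply Stirling to reduce~\eqref{eq:01-linear-upper-bound} to an inequality of the form $(t/\euler\mu r)^t \gtrsim \operatorname{poly}(\mu,r,t)$, then take $t$ to be $\euler\mu r$ times a factor $1+o_{\mu r}(1)$. The only cosmetic difference is that the paper writes down an explicit closed-form choice $t = 1 + \lceil \euler\mu r\,(\euler^2\mu^2 r^4)^{1/(\euler\mu r)}\rceil$ and verifies~\eqref{eq:01-linear-upper-bound} directly for all $\mu,r$ in one shot, whereas you argue asymptotically with $\alpha = C\log(\mu r)/(\mu r)$ and handle the bounded regime separately; since $(\euler^2\mu^2 r^4)^{1/(\euler\mu r)} = 1 + \Theta(\log(\mu r)/(\mu r))$, the two choices coincide up to constants.
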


\begin{proof}
    We show that 
    \[
        t=1+\left\lceil \euler \mu r \left(\euler^2 \mu^2 r^4 \right)^{1/(\euler \mu r)} \right\rceil = \euler \mu r (1+o_{\mu r}(1))
    \]
    satisfies Equation~\eqref{eq:01-linear-upper-bound}.
    Indeed, with Stirling's Formula $t! \geq \sqrt{2 \pi t} (t/\euler)^t$ and with $(t+1)/\sqrt{2 \pi t} \leq t$ for $t \geq 1$ it follows that
    \begin{equation*}
        \begin{split}
            \euler \frac{t+1}{t!} \mu^{t+1} r^{t+3} 
            &\leq \euler \frac{t+1}{\sqrt{2 \pi t}} \mu r^3 \cdot \left( \frac{\euler \mu r}{t} \right)^t
            \leq \euler^2 \mu^2 r^4 \left( \frac{\euler \mu r}{t-1} \right)^{t-1}\\
            &\leq \euler^2 \mu^2 r^4 \left( \euler^2 \mu^2 r^4 \right)^{-(\euler^2 \mu^2 r^4)^{1/(\euler \mu r)}} \leq 1~.
        \end{split}
    \end{equation*}
\end{proof}

\subsection{Upper bound for girth at least four}
\label{subsec:upper-bound-girth}
In the next theorem, we show a better bound for uniform near-regular hypergraphs without small cycles.
Let $H=(V,E)$ be a hypergraph.
A \emph{cycle of length $k$} is a sequence $v_1 e_1 v_2 e_2 \dots v_k e_k v_{k+1}$ of distinct vertices $v_1,v_2,\dots,v_k$ and distinct edges $e_1,e_2,\dots,e_k$ such that $v_1 = v_{k+1}$ and $\{v_i,v_{i+1}\}\subseteq e_i$ for $i=1,2,\dots, k$.
The \emph{girth} of a hypergraph $H$ is the smallest possible length of a cycle in $H$.
In Lemma~\ref{lemma:upper-bound-high-girth} and Theorem~\ref{thm:upper-bound-high-girth}, we prove a better bound for uniform regular hypergraphs of girth at least $4$.

For that, we need the following lemma.
It says that if we have two events $A$ and $B$ that are independent by conditioning on the set $\{\tilde C_1,\dots, \tilde C_n\}$ of events for all $\tilde C_i \in \{C_i, \overline C_i\}$, $i=1,\dots,n$, and each event $C_i$ decreases the probability that $A$ occurs but raises the probability that $B$ occurs, then $\Prob[A \mid B] \leq \Prob[A]$.
Moreover, we use the abbreviation
\[
    \sum_{\substack{\tilde C_i \in \{C_i, \overline C_i\} \\ i=1,\dots,k}} f(\tilde C_1, \dots, \tilde C_k)
    = \sum_{\substack{\tilde C_i \in \{C_i, \overline C_i\} \\ i=1,\dots,k-1}} \; \sum_{\tilde C_k \in \{C_k, \overline C_k\}} f(\tilde C_1, \dots, \tilde C_k)
\]
for $k \geq 1$ and with 
\[
    \sum_{\substack{\tilde C_i \in \{C_i, \overline C_i\} \\ i\in \emptyset}} a = a~.
\]
\begin{lem}
    \label{lemma:helper-lemma-triangle-free-probabilities}
    Let $A$, $B$ and $C_1,\dots,C_n$ be events such that the following holds for all events $\tilde C_i \in \{C_i, \overline C_i\}$, $i=1,\dots,n$,
    \begin{enumerate}
        \item\label{itm:helper-lemma-eq01}
        $\Prob\left[A \mid B, \bigwedge_{i=1}^n \tilde C_i\right] = \Prob\left[A \mid \bigwedge_{i=1}^n \tilde C_i\right]$,
        \item\label{itm:helper-lemma-eq02}
        $\Prob\left[A \mid C_{k+1}, \bigwedge_{i=1}^k \tilde C_i\right] \leq \Prob\left[ A \mid \bigwedge_{i=1}^k \tilde C_i\right]$ for all $0 \leq k < n$,
        \item\label{itm:helper-lemma-eq03}
        $\Prob\left[ B \mid C_{k+1}, \bigwedge_{i=1}^k \tilde C_i\right] \geq \Prob\left[ B \mid \bigwedge_{i=1}^k \tilde C_i\right]$ for all $0 \leq k < n$.
    \end{enumerate}
    Then, $\Prob[A \mid B] \leq \Prob[A]$.
\end{lem}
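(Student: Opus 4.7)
The plan is to establish, by backward induction on $k \in \{0,1,\ldots,n\}$, the stronger statement that for every choice of $\tilde C_i \in \{C_i, \overline C_i\}$, $i=1,\ldots,k$, one has
\[
    \Prob\left[A \;\Big|\; B, \bigwedge_{i=1}^k \tilde C_i\right] \leq \Prob\left[A \;\Big|\; \bigwedge_{i=1}^k \tilde C_i\right].
\]
Taking $k=0$ yields the lemma, and the base case $k=n$ is exactly hypothesis~(\ref{itm:helper-lemma-eq01}) (in fact, equality).

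For the inductive step, I would fix $k<n$ and $\tilde C_1,\ldots,\tilde C_k$, and decompose both sides by conditioning on $C_{k+1}$ versus $\overline C_{k+1}$. Write
\[
    p' = \Prob\left[A \;\Big|\; \bigwedge_{i=1}^k \tilde C_i, C_{k+1}\right], \quad
    q' = \Prob\left[A \;\Big|\; \bigwedge_{i=1}^k \tilde C_i, \overline C_{k+1}\right],
\]
\[
    \alpha = \Prob\left[C_{k+1} \;\Big|\; \bigwedge_{i=1}^k \tilde C_i\right], \quad
    \beta = \Prob\left[C_{k+1} \;\Big|\; B, \bigwedge_{i=1}^k \tilde C_i\right].
\]
Applying the inductive hypothesis at step $k+1$ to both extended assignments bounds each conditional of $A$ given $B$ by the corresponding $p'$ or $q'$, giving
\[
    \Prob\left[A \;\Big|\; B, \bigwedge_{i=1}^k \tilde C_i\right] \leq p'\beta + q'(1-\beta), \qquad
    \Prob\left[A \;\Big|\; \bigwedge_{i=1}^k \tilde C_i\right] = p'\alpha + q'(1-\alpha),
\]
where the right-hand equality is just the law of total probability. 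It then remains to check $(p'-q')(\beta-\alpha) \leq 0$: hypothesis~(\ref{itm:helper-lemma-eq03}) gives $\beta \geq \alpha$, and hypothesis~(\ref{itm:helper-lemma-eq02}) gives $p' \leq p'\alpha + q'(1-\alpha)$, which rearranges to $(1-\alpha)(q'-p') \geq 0$. Hence $p' \leq q'$ whenever $\alpha < 1$, while if $\alpha = 1$ then necessarily $\beta = 1$ and $\beta-\alpha = 0$, so in either case the desired inequality holds.

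The only real subtlety I anticipate is bookkeeping around degenerate conditional probabilities: assignments with $\Prob[\bigwedge \tilde C_i] = 0$ or $\Prob[B, \bigwedge \tilde C_i] = 0$ make some of the conditional probabilities above formally undefined, but these can be handled by the standard convention of assigning them arbitrary values in $[0,1]$, since such branches carry zero weight in the decompositions. Beyond this, no further probabilistic machinery is required; the argument is essentially a two-state monotone-coupling check dressed up as an induction.
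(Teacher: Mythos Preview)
Your proof is correct and follows essentially the same backward induction as the paper's, using Conditions~(\ref{itm:helper-lemma-eq02}) and~(\ref{itm:helper-lemma-eq03}) in the inductive step in exactly the same way (deducing $p'\leq q'$ and $\beta\geq\alpha$). The only difference is that you carry the pointwise inequality $\Prob[A\mid B,\bigwedge_{i=1}^k\tilde C_i]\leq\Prob[A\mid\bigwedge_{i=1}^k\tilde C_i]$ for each assignment, whereas the paper carries the single aggregate inequality $\Prob[A\mid B]\leq\sum_{\tilde C_1,\ldots,\tilde C_k}\Prob[\bigwedge_{i=1}^k\tilde C_i\mid B]\,\Prob[A\mid\bigwedge_{i=1}^k\tilde C_i]$; your invariant is slightly stronger and arguably cleaner, but the underlying computation is identical.
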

\begin{proof}
    We show that the inequality 
    \begin{equation}
        \label{eq:helper-lemma-high-girth}
        \Prob\left[ A \mid B \right] 
        \leq \sum_{\substack{\tilde C_i \in \{C_i, \overline C_i\} \\ i=1,\dots,k}} \Prob\left[ \bigwedge_{i=1}^{k} \tilde C_i \mid B \right] \Prob\left[ A \mid \bigwedge_{i=1}^{k} \tilde C_i \right]
    \end{equation}
    holds for all $k=0,1,\dots,n$ by induction on $n-k$.
    Clearly, for $k = 0$, it follows that $\Prob[A \mid B] \leq \Prob[A]$.
    For the base case $k=n$ it holds that
    \[
        \Prob\left[ A \mid B \right] 
        = \sum_{\substack{\tilde C_i \in \{C_i, \overline C_i\} \\ i=1,\dots,n}} \Prob\left[ \bigwedge_{i=1}^{n} \tilde C_i \mid B \right] \Prob\left[ A \mid B, \bigwedge_{i=1}^{n} \tilde C_i \right]~.
    \]
    With Condition~\ref{itm:helper-lemma-eq01} of Lemma~\ref{lemma:helper-lemma-triangle-free-probabilities}, the base case $k=n$ of Equation~\eqref{eq:helper-lemma-high-girth} follows.
    Now, suppose that $0 \leq k < n$ and that the inequality holds for $k+1$.
    Then, we use the induction hypothesis to get
    \begin{equation*}
        \begin{split}
            \Prob\left[ A \mid B \right]
            &\leq \sum_{\substack{\tilde C_i \in \{C_i, \overline C_i\} \\ i=1,\dots,k+1}} \Prob\left[ \bigwedge_{i=1}^{k+1} \tilde C_i \mid B \right] \Prob\left[ A \mid \bigwedge_{i=1}^{k+1} \tilde C_i \right]
            = \sum_{\substack{\tilde C_i \in \{C_i, \overline C_i\} \\ i=1,\dots,k}} \Prob\left[ \bigwedge_{i=1}^{k} \tilde C_i \mid B \right] \cdot X
        \end{split}
    \end{equation*}
    with
    \begin{equation*}
        \begin{alignedat}{2}
             X &= &&
             \Prob\left[ C_{k+1} \mid B, \bigwedge_{i=1}^k \tilde C_i \right] 
             \Prob\left[ A \mid C_{k+1}, \bigwedge_{i=1}^k \tilde C_i \right] + 
             \Prob\left[ \overline C_{k+1} \mid B, \bigwedge_{i=1}^k \tilde C_i \right] 
             \Prob\left[ A \mid \overline C_{k+1}, \bigwedge_{i=1}^k \tilde C_i \right] \\
             &= &&\Prob\left[ C_{k+1} \mid B, \bigwedge_{i=1}^k \tilde C_i \right] 
             \underbrace{\left( \Prob\left[ A \mid C_{k+1}, \bigwedge_{i=1}^k \tilde C_i \right] - \Prob\left[ A \mid \overline C_{k+1}, \bigwedge_{i=1}^k \tilde C_i \right] \right)}_{\leq 0}\\ 
             & &&+ 
             \Prob\left[ A \mid \overline C_{k+1}, \bigwedge_{i=1}^k \tilde C_i \right]~.
        \end{alignedat}
    \end{equation*}
    It follows from Condition~\ref{itm:helper-lemma-eq02} that $\Prob\left[ A \mid C_{k+1}, \bigwedge_{i=1}^k \tilde C_i \right] \leq \Prob\left[ A \mid \overline C_{k+1}, \bigwedge_{i=1}^k \tilde C_i \right]$ and it follows from Condition~\ref{itm:helper-lemma-eq03} that $\Prob\left[ C_{k+1} \mid B, \bigwedge_{i=1}^k \tilde C_i \right] \geq \Prob\left[ C_{k+1} \mid \bigwedge_{i=1}^k \tilde C_i \right]$.
    We use the second inequality and get
    \begin{equation*}
        \begin{alignedat}{2}
            X &\leq && \Prob\left[ C_{k+1} \mid \bigwedge_{i=1}^k \tilde C_i \right] 
            \Prob\left[ A \mid C_{k+1}, \bigwedge_{i=1}^k \tilde C_i \right] + 
            \Prob\left[ \overline C_{k+1} \mid \bigwedge_{i=1}^k \tilde C_i \right] 
            \Prob\left[ A \mid \overline C_{k+1}, \bigwedge_{i=1}^k \tilde C_i \right] \\
            &= &&\Prob\left[ A \mid \bigwedge_{i=1}^{k} \tilde C_i \right]
        \end{alignedat}
    \end{equation*}
    and the induction step is proven.
\end{proof} 

\begin{lem}
    \label{lemma:upper-bound-high-girth}
    Let $H=(V,E)$ be an $r$-uniform hypergraph of girth at least $4$ with minimum degree $\delta \geq \ln r + 2$ and maximum degree $\Delta$.
    Let $\eta = \Delta/(\delta - 1)$ and let $t$ be a positive integer satisfying
    \begin{equation}
        \label{eq:01-triangle-free-upper-bound}
        \frac{t!}{t+1} \geq \euler r^2 \eta^{t+1} (\ln r + 1 + \euler^{-1})^{t+1}~.
    \end{equation}
    Then $H$ has a $t$-shallow hitting edge set.
\end{lem}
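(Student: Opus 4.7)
My plan is to mimic the Lovász Local Lemma argument from Lemma~\ref{lemma:upper-bound-uniform-regular-hypergraph}: construct $M$ by a random experiment that automatically makes $M$ hitting, then apply the LLL to the ``bad'' events $A_F = \{F \subseteq M\}$ for $F \in \Fcal$ and conclude that, with positive probability, no vertex is covered more than $t$ times by $M$. The two new ingredients, both enabled by the girth-$\geq 4$ hypothesis, are (i) an exponentially sharper bound on $\Prob[A_F]$, replacing the previous $(r/\delta)^{t+1}$ by roughly $((\ln r)/(\delta - 1))^{t+1}$, and (ii) the use of the \emph{weaker} notion of dependency graph from Remark~\ref{rem:weak-dependency-graph}, verified by the conditional-probability machinery of Lemma~\ref{lemma:helper-lemma-triangle-free-probabilities}.

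For the random experiment and the bad events I would keep the set-up of the previous lemma: for each $v \in V$ independently pick $P_v \in \incident(v)$ uniformly at random, set $M = \{P_v : v \in V\}$, and define $\Fcal$ and $A_F$ exactly as before. The crucial structural fact is that under girth $\geq 4$ any two distinct edges of $F \subseteq \incident(v)$ intersect exactly at the common vertex $v$, so the vertex sets $\{e \setminus \{v\} : e \in F\}$ are pairwise disjoint. Hence, conditional on the single random variable $P_v$, the events ``$e \in M$'' for different $e \in F$ depend on disjoint sets of independent picks $P_u$ and therefore factorise. A careful computation, using the hypothesis $\delta \geq \ln r + 2$ to bound the tail $1 - (1 - 1/\delta)^{r-1}$ by $(\ln r + 1 + \euler^{-1})/(\delta - 1)$, then yields the sharp estimate
\[
\Prob[A_F] \leq p := \left(\frac{\ln r + 1 + \euler^{-1}}{\delta - 1}\right)^{t+1}.
\]

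For the dependency graph I would take the same adjacency as before, $A_F \sim A_{F'}$ iff $V(F) \cap V(F') \neq \emptyset$, and inherit the counting bound $d + 1 \leq \frac{t+1}{t!}\, r^2 \Delta^{t+1}$ from the previous lemma. However, the events $A_F$ are no longer mutually independent from their non-neighbours (the random picks $P_u$ at vertices outside $V(F)$ can still couple $A_F$ with other $A_{F'}$), so I must verify the weaker condition $\Prob[A_F \mid \bigwedge_{F' \in S} \overline{A_{F'}}] \leq \Prob[A_F]$ for non-neighbour sets $S$. I would do this through Lemma~\ref{lemma:helper-lemma-triangle-free-probabilities}: with $A = A_F$, $B = \bigwedge_{F' \in S} \overline{A_{F'}}$, and introducing the per-edge events $C_i = (e_i \in M)$ one at a time (indexed by the edges contributing to $B$), the girth-$\geq 4$ hypothesis guarantees that conditioning on any such $C_i$ can only decrease $\Prob[A_F]$ and increase $\Prob[B]$, securing hypotheses~\ref{itm:helper-lemma-eq02} and~\ref{itm:helper-lemma-eq03}; hypothesis~\ref{itm:helper-lemma-eq01} follows because once all the $C_i$'s are conditioned on, $A_F$ is measurable with respect to them and $B$ drops out. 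Substituting the probability bound $p$ and the dependency bound into the LLL inequality $\euler p (d+1) \leq 1$, together with $\Delta = \eta(\delta - 1)$, recovers precisely \eqref{eq:01-triangle-free-upper-bound}, so the Local Lemma (in its weaker form) produces the desired $t$-shallow hitting edge set.

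The main obstacle is the refined probability bound on $A_F$. The union-bound route from Lemma~\ref{lemma:upper-bound-uniform-regular-hypergraph} yields only $\Prob[e \in M] \leq r/\delta$, and shaving the factor $r/\ln r$ per edge genuinely requires combining the girth-induced decoupling of the edges of $F$ outside their common vertex with a delicate use of the hypothesis $\delta \geq \ln r + 2$. A secondary obstacle is the correct sequencing of the per-edge events $C_i$ in Lemma~\ref{lemma:helper-lemma-triangle-free-probabilities}: the monotonicity hypotheses only hold when one conditions in an order consistent with the coupling between $A_F$ and $B$, and checking this order-by-order is where most of the remaining bookkeeping lives.
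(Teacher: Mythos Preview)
Your plan has a genuine gap: the one-step random experiment from Lemma~\ref{lemma:upper-bound-uniform-regular-hypergraph} cannot deliver the sharpened per-edge bound you quote. Under that experiment, for an edge $e$ through the common vertex $v$ of $F$, the probability that some $u\in e\setminus\{v\}$ picks $e$ is $1-\prod_{u\in e\setminus\{v\}}(1-1/\deg(u))$, which is bounded \emph{below} by $1-(1-1/\Delta)^{r-1}$. The inequality $1-(1-1/\delta)^{r-1}\le(\ln r+1+\euler^{-1})/(\delta-1)$ you assert is false already in the regular case $r=\delta=\Delta=100$ (left side $\approx 0.63$, right side $\approx 0.06$), and the hypothesis $\delta\ge\ln r+2$ does not help. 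The girth-$\ge 4$ condition does buy near-independence of the events $\{e\in M\}$ for the $t+1$ edges of $F$, but it does nothing to push any single one of those probabilities below order $\min(1,r/\delta)$; your product bound on $\Prob[A_F]$ is therefore off by a factor of roughly $(r/\ln r)^{t+1}$, and the LLL inequality fails by exactly the margin that separates the two lemmas.

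The missing idea is a \emph{different} random experiment. The paper uses a two-round scheme: first include each edge independently with probability $(\ln r+1)/(\delta-1)$; second, for each still-uncovered vertex, add one incident edge chosen uniformly at random. The first-round probability is calibrated so that any vertex is left uncovered with probability at most $(1-(\ln r+1)/(\delta-1))^{\delta}\le 1/(\euler r)$ (this is precisely where $\delta\ge\ln r+2$ is used), which makes the second-round contribution to $\Prob[e\in M]$ at most $r\cdot\frac{1}{\euler r}\cdot\frac{1}{\delta}\le \euler^{-1}/(\delta-1)$ and yields the claimed bound $(\ln r+1+\euler^{-1})/(\delta-1)$. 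The verification of the weak dependency-graph condition via Lemma~\ref{lemma:helper-lemma-triangle-free-probabilities} is correspondingly more delicate than you sketch: the conditioning events $C_i$ that make the argument work are \emph{first-round} indicators $X_g$ (for edges $g$ straddling $V(F)$ and $V(\Fcal')$), not events of the form ``$e_i\in M$''; with your choice, $A_F$ is not measurable with respect to the $C_i$, so your justification of Condition~\ref{itm:helper-lemma-eq01} does not go through.
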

\begin{proof}    
    We build an edge set $M$ with the following two-step random experiment.
    In the first step, we pick each edge in $E$ independently at random with probability $(\ln r + 1)/(\delta - 1)$.
    In the second step, we pick for every vertex $v$ that is not covered after the first step an incident edge independently uniformly at random.
    Clearly, $M$ is a hitting edge set.
    For an edge set $F$, we denote by $V(F)$ the set of vertices $\bigcup_{e \in F} e$.
    For a set $\mathcal{F}$ of edge sets, we denote by $V(\mathcal{F})$ the set of vertices $\bigcup_{F \in \mathcal{F}} V(F)$.

    For each edge $e \in E$, we define $X_e$ to be the event that $e$ is picked in the first step.
    For \emph{each} vertex $v$ (independently of the results of the first step), we pick an incident edge independently uniformly at random and denote by $Y_v \in \incident(v)$ the random variable that indicates which edge is picked, and by $Y_{v,e}$ the event that $Y_v = e$.
    Then, 
    \[
        W_e = \bigvee_{v \in e} \left( Y_{v,e} \wedge \bigwedge_{e' \in \incident(v)} \overline X_{e'} \right)
    \]
    denotes the event that $e$ is picked in the second step and $Z_e = X_e \vee W_e$ is the event that $e$ is picked in the first or second step.
    Define the set $\mathcal{F}$ to be the set of all edge sets $F \subseteq E$ of size $t+1$ such that there exists a vertex $v \in V$ with $F \subseteq \incident(v)$, i.e.,
    \begin{equation*}
        \mathcal{F} = \{F \subseteq E \colon |F|=t+1, F \subseteq \incident(v) \text{ for some }  v \in V\}~.
    \end{equation*}
    For a set $F \in \mathcal{F}$, we denote the event that $F \subseteq M$ by $A_F$, i.e., $A_F = \bigwedge_{e \in F} Z_e$.
    These shall be the bad events in the Lovász Local Lemma below.
    
    We need the following observation, which easily follows from $H$ having girth at least $4$.
    \begin{obs}
        \label{obs:girth-four}
        For any two distinct edges $e,e' \in E$,
        \begin{enumerate}
            \item $|e \cap e'| \leq 1$ and
            \item if $e \cap e' = \{u\}$ for some $u \in V$ then there exists no edge $f \in E \setminus \{e,e'\}$ with $e \cap f = \{v\}$ and $e' \cap f =\{v'\}$ for $v,v' \in E \setminus \{u\}$.
        \end{enumerate}
    \end{obs}
    
    Now, we need to bound the probability that a bad event $A_F$ occurs.
    Therefore, we show that $\Prob\left[Z_e \mid \bigwedge_{e' \in F'} Z_{e'}\right] \leq \Prob[Z_e]$, which will be done in the following two claims and using Observation~\ref{obs:girth-four}.

    \begin{claim}
    \label{claim:probabilities-bad-events-subclaim}
        Let $F' \neq \emptyset$ be an edge set with $\bigcap_{e' \in F'} e' \neq \emptyset$.
        Then, $\Prob[\bigwedge_{e' \in F'} Z_{e'} \mid \bigwedge_{e' \in F'} \overline X_{e'}] \leq \Prob[\bigwedge_{e' \in F'} Z_{e'}]$.
    \end{claim}
    \begin{claimproof}
        Let $v$ be a vertex in $\bigcap_{e' \in F'} e'$, which is unique for $|F'| \geq 2$ (Observation~\ref{obs:girth-four}).
        First, we bound the right-hand side:
        \[
            \Prob\left[\bigwedge_{e' \in F'} Z_{e'}\right]
            \geq \Prob\left[\bigwedge_{e' \in F'} X_{e'}\right]
            = \prod_{e' \in F'} \Prob[X_{e'}]
            = \left(\frac{\ln r + 1}{\delta - 1}\right)^{|F'|}~,
        \]
        where we used that the events $X_{e'}$ are independent.
        Now, we bound the left-hand side as follows:
        \begin{equation}
        \label{eq:probabilities-bad-events-subclaim}
            \begin{alignedat}{2}
                & &&\Prob\left[\bigwedge_{e' \in F'} Z_{e'} \mid \bigwedge_{e' \in F'} \overline X_{e'}\right]
                = \Prob\left[\bigwedge_{e' \in F'} W_{e'} \mid \bigwedge_{e' \in F'} \overline X_{e'}\right]\\
                &\leq && \Prob\left[Y_v \in F' \wedge \bigwedge_{g \in \incident(v) \setminus F'} \overline X_g\right] \Prob\left[\bigwedge_{e' \in F'} W_{e'} \mid Y_v \in F', \bigwedge_{g \in \incident(v)} \overline X_g\right]\\
                & && + \Prob\left[\bigwedge_{e' \in F'} W_{e'} \mid \left(Y_v \notin F' \vee \bigvee_{g \in \incident(v)\setminus F'} X_g \right), \bigwedge_{e' \in F'} \overline X_{e'}\right]
            \end{alignedat}
        \end{equation}
        Then, we explicitly calculate the probabilities using Observation~\ref{obs:girth-four}.
        First, it holds that $\Prob\left[Y_v \in F' \wedge \bigwedge_{g \in \incident(v) \setminus F'} \overline X_g\right] \leq \Prob\left[Y_v \in F'\right] \leq |F'|/\delta$.
        
        For the calculation of the other probabilities, we need the following bounds.
        Let $e,e' \in \incident(v)$ be two distinct edges and $v' \in e' \setminus\{v\}$.
        Then,
        \begin{equation*}
            \begin{split}
                &\Prob\left[Y_{v',e'} \wedge \bigwedge_{e'' \in \incident(v')} \overline X_{e''} \mid \overline X_{e'}\right]
                = \Prob\left[Y_{v',e'} \wedge \bigwedge_{e'' \in \incident(v') \setminus\{e'\}} \overline X_{e''}\right]\\
                &\leq \frac{1}{\delta} \left(1-\frac{\ln r - 1}{\delta - 1}\right)^{\delta - 1}
                \leq \frac{1}{\euler \delta r}
            \end{split}
        \end{equation*}
        and
        \[
            \Prob\left[W_{e'} \mid Y_{v} = e, \overline X_{e'}\right]
            \leq \sum_{v' \in e' \setminus \{v\}} \Prob\left[Y_{v',e'} \wedge \bigwedge_{e'' \in \incident{v'}} X_{e''} \mid \overline X_{e'}\right]
            \leq \frac{r-1}{\euler \delta r} \leq \frac{1}{\euler \delta}~.
        \]
        Here we used $\left(1-(\ln r + 1)/(\delta - 1)\right)^{\delta - 1} \leq 1/(\euler r)$ for $\delta \geq \ln r + 2$.

        Finally, we can bound the other probabilities occurring in Equation~\ref{eq:probabilities-bad-events-subclaim}.
        Let $e \in F'$ be fixed. Then,
        \begin{equation*}
            \begin{split}
                &\Prob\left[ \bigwedge_{e' \in F'} W_{e'}\mid Y_v \in F', \bigwedge_{g \in \incident(v)} \overline X_g \right]
                \leq \Prob\left[\bigwedge_{e' \in F' \setminus\{e\}} W_{e'} \mid Y_v = e, \bigwedge_{g \in \incident(v)} \overline X_g\right]\\
                &= \prod_{e' \in F' \setminus\{e\}} \Prob\left[W_{e'} \mid Y_v = e, \overline X_{e'}\right]
                \leq \left(\frac{1}{\euler \delta}\right)^{|F'|-1}~.
            \end{split}
        \end{equation*}
        Analogously, we bound using some fixed $e \in \incident(v) \setminus F'$:
        \begin{equation*}
            \begin{split}
                &\Prob\left[\bigwedge_{e' \in F'} W_{e'} \mid \left(Y_v \notin F' \vee \bigvee_{g \in \incident(v)\setminus F'} X_g \right), \bigwedge_{e' \in F'} \overline X_{e'}\right]\\
                &\leq \prod_{e' \in F'} \Prob\left[W_{e'} \mid \left(Y_{v} = e \vee \bigvee_{g \in \incident(v)\setminus F'} X_g\right), \overline X_{e'}\right]
                \leq \left(\frac{1}{\euler\delta}\right)^{|F'|}~.
            \end{split}
        \end{equation*}

        In total, we get
        \begin{equation*}
                \Prob\left[\bigwedge_{e' \in F'} Z_{e'} \mid \bigwedge_{e' \in F'} \overline X_{e'}\right]
                \leq (\euler |F'| + 1) \left(\frac{1}{\euler \delta}\right)^{|F'|}
                \leq (\euler |F'| + 1) \left(\frac{1}{\euler (\delta - 1)}\right)^{|F'|}~.
        \end{equation*}

        Hence, in order to show the claim, we only need to show that $(\euler (\ln r + 1))^{|F'|} \geq \euler |F'| + 1$.
        Since $r \geq 2$, it suffices to show that $(\euler (\ln 2 + 1))^{|F'|} \geq \euler |F'| + 1$, which holds for all $|F'| \geq 1$.
        This finishes the proof of this claim.
    \end{claimproof}
    
    \begin{claim}
        \label{claim:probabilities-bad-events}
        For every edge set $F'$ and every edge $e \notin F'$ with $e \cap (\bigcap_{e' \in F'} e') \neq \emptyset$ it holds that $\Prob\left[Z_e \mid \bigwedge_{e' \in F'} Z_{e'}\right] \leq \Prob[Z_e]$.
    \end{claim}
    \begin{claimproof}
        For $F'=\emptyset$, the claim obviously holds.
        Thus, let $F' \neq \emptyset$.
        Let $v$ be the vertex in $e \cap (\bigcap_{e' \in F'} e')$.
        By Observation~\ref{obs:girth-four}, the vertex $v$ is unique.
        We show that the conditions of Lemma~\ref{lemma:helper-lemma-triangle-free-probabilities} hold with $A := Z_e$, $B := \bigwedge_{e' \in F'} Z_{e'}$, $C_1 := \bigvee_{e' \in F'} X_{e'}$, $C_2 := \overline X_e$ and $C_3 := \overline Y_{v,e}$.

        First, we show that Condition~\ref{itm:helper-lemma-eq01} of Lemma~\ref{lemma:helper-lemma-triangle-free-probabilities} holds.
        Let $\tilde C_2 \in \{C_2, \overline C_2\}$ and $\tilde C_3 \in \{C_3, \overline C_3\}$ be arbitrary.
        First, note that $\Prob[A \mid B, C_1, \tilde C_2, \tilde C_3] = \Prob[A \mid C_1, \tilde C_2, \tilde C_3]$ as if one edge in $F'$ is picked in the first step, then $A$ is independent of $B$.
        Secondly, $\Prob[A \mid B, \overline C_1, \overline C_2, \tilde C_3] = \Prob[A \mid \overline C_1, \overline C_2, \tilde C_3]$ as if $e$ is picked in the first step then $A$ is independent of $B$.
        Thirdly, $\Prob[A \mid B, \overline C_1, C_2, C_3] = \Prob[A \mid \overline C_1, C_2, C_3]$ as if $Y_v \neq e$ then $A$ is independent of $B$.
        And finally, $\Prob[A \mid B, \overline C_1, C_2, \overline C_3] = \Prob[A \mid \overline C_1, C_2, \overline C_3]$ as if none of the edges in $F' \cup \{e\}$ is picked in the first step and $Y_v = e$, then $A$ is independent of $B$.
        In each case we used that $H$ has girth at least $4$ and Observation~\ref{obs:girth-four}.
        Therefore, Condition~\ref{itm:helper-lemma-eq01} of Lemma~\ref{lemma:helper-lemma-triangle-free-probabilities} is satisfied.

        Now, we show that Condition~\ref{itm:helper-lemma-eq02} of Lemma~\ref{lemma:helper-lemma-triangle-free-probabilities} holds.
        First, note that $\Prob[A \mid \tilde C_1, \tilde C_2, C_3] \leq \Prob[A \mid \tilde C_1, \tilde C_2]$ and $\Prob[A \mid \tilde C_1, C_2] \leq \Prob[A \mid \tilde C_1]$ for all $\tilde C_1 \in \{C_1, \overline C_1\}$ and $\tilde C_2 \in \{C_2, \overline C_2\}$ hold since $C_2$ and $C_3$ only have negative occurrences in $A$.
        Moreover, $\Prob[A \mid C_1] \leq \Prob[A]$ since each $X_{e'}$ for $e' \in F'$ only has negative occurrences in $A$.
        This satisfies Condition~\ref{itm:helper-lemma-eq02} of Lemma~\ref{lemma:helper-lemma-triangle-free-probabilities}.
        
        Now, we show that Condition~\ref{itm:helper-lemma-eq03} of Lemma~\ref{lemma:helper-lemma-triangle-free-probabilities} holds.
        Note that $C_2$ and $C_3$ only have positive occurrences in $B$ and therefore, $\Prob[B \mid \tilde C_1, \tilde C_2, C_3] \geq \Prob[B \mid \tilde C_1, \tilde C_2]$ and $\Prob[B \mid \tilde C_1, C_2] \geq \Prob[B \mid \tilde C_1]$ for all $\tilde C_1 \in \{C_1, \overline C_1\}$ and $\tilde C_2 \in \{C_2, \overline C_2\}$.
        Hence, we only have to show that $\Prob[B \mid C_1] \geq \Prob[B]$, which is equivalent to $\Prob[B \mid \overline C_1] \leq \Prob[B]$ and is done in Claim~\ref{claim:probabilities-bad-events-subclaim}.
        
        Therefore, all Conditions of Lemma~\ref{lemma:helper-lemma-triangle-free-probabilities} are satisfied and we can conclude that $\Prob[A \mid B] \leq \Prob[A]$, i.e., $\Prob\left[Z_e \mid \bigwedge_{e' \in F'} Z_{e'}\right] \leq \Prob[Z_e]$, which finishes the proof of the claim.
    \end{claimproof}
    
    Using Claim~\ref{claim:probabilities-bad-events}, we can now bound the probability of the bad events $A_F$.
    The probability that an edge $e$ is picked in the first or second step is bounded by
    \[ 
        \Prob[Z_e] = \Prob[X_e] + \Prob[W_e]
        \leq \frac{\ln r + 1}{\delta - 1} + \left( 1 - \frac{\ln r + 1}{\delta - 1} \right)^\delta \cdot \frac{r}{\delta}
        \leq \frac{\ln r + 1 + \euler^{-1}}{\delta-1}~.
    \]
    Here, we used our assumption $\delta \geq \ln r + 2$.
    By Claim~\ref{claim:probabilities-bad-events}, we can bound the probability of a bad event $A_F$ by
    \[
        \Prob[A_F] = \Prob\left[ \bigwedge_{e \in F} Z_e \right]
        \leq \prod_{e \in F} \Prob[Z_e]
        \leq \left( \frac{\ln r + 1 + \euler^{-1}}{\delta - 1} \right)^{t+1} =: p~.
    \]

    Now, we have to bound the degree of dependence of the events $A_F$ with $F \in \mathcal{F}$.
    We use the same dependency graph as in the proof of Lemma~\ref{lemma:upper-bound-uniform-regular-hypergraph}, i.e., two bad events $A_F$ and $A_{F'}$ are adjacent in the dependency graph if and only if $V(F) \cap V(F') \neq \emptyset$.
    To show that this dependency graph satisfies the conditions of Remark~\ref{rem:weak-dependency-graph}, we use the following claim.
    
    \begin{claim}
        \label{claim:dependence-bad-events}
        For every set $\mathcal{F}'$ of edge sets in $\Fcal$ and an edge set $F \in \Fcal$ with $V(F) \cap V(\mathcal{F}') = \emptyset$ it holds that $\Prob\left[A_F \mid \bigwedge_{F' \in \mathcal{F}'} \overline A_{F'}\right] \leq \Prob[A_F]$.
    \end{claim}
    \begin{claimproof}
        Let $S \subseteq E$ be the set of edges $g \in E$ with $g \cap V(F) \neq \emptyset$ and $g \cap V(\mathcal{F}') \neq \emptyset$.
        We show that the conditions of Lemma~\ref{lemma:helper-lemma-triangle-free-probabilities} hold with $A := A_F$, $B := \bigwedge_{F' \in \mathcal{F}'} \overline A_{F'}$ and $\{C_1,\dots,C_n\} := \{X_g\}_{g \in S}$.
        Then, by Lemma~\ref{lemma:helper-lemma-triangle-free-probabilities}, this claim follows.

        First of all, Condition~\ref{itm:helper-lemma-eq01} holds since by conditioning on $\bigwedge_{i=1}^n \tilde C_i$ for $\tilde C_i \in \{C_i, \overline C_i\}$, $A$ is independent of $B$.
        Secondly, Condition~\ref{itm:helper-lemma-eq02} and Condition~\ref{itm:helper-lemma-eq03} hold since $C_j = X_g$, for some $g \in S$, only has negative occurrences in $A = A_F$ and only positive occurrences in $B = \bigwedge_{F' \in \mathcal{F}'} \overline A_{F'}$.
    \end{claimproof}

    It therefore follows that the graph constructed above satisfies the conditions of Remark~\ref{rem:weak-dependency-graph} and is a valid dependency graph.
    Analogous to the proof of Lemma~\ref{lemma:upper-bound-uniform-regular-hypergraph}, we can bound the degree of dependence $d$ by
    \[
        d + 1 \leq (t+1)\Delta r^2 \binom{\Delta-1}{t} \leq \frac{t+1}{t!} r^2 \Delta^{t+1}~.
    \]
    To apply the Lovász Local Lemma, we calculate $\euler p (d + 1)$.
    With Equation~\eqref{eq:01-triangle-free-upper-bound} and $\eta = \Delta / (\delta-1)$ it follows that
    \begin{equation*}
        \euler p (d + 1) 
        \leq \euler \cdot \left( \frac{\ln r + 1 + \euler^{-1}}{\delta - 1} \right)^{t+1} \cdot \frac{t+1}{t!} r^2 \Delta^{t+1}
        \leq \euler r^2 \eta^{t+1} \left( \ln r + 1 + \euler^{-1} \right)^{t+1} \frac{t+1}{t!}
        \leq 1~.
    \end{equation*}
    By the Lovász Local Lemma, the probability that no event $A_F$ with $F \in \mathcal{F}$ occurs is greater than zero.
    Thus, there exists an edge set $M \subseteq E$ such that every vertex has degree $1 \leq \degree_M(v) \leq t$ in $M$.
\end{proof}

Using Lemma~\ref{lemma:upper-bound-high-girth}, we find an asymptotic upper bound for $t(\mu, r)$, i.e., the largest integer such that every $r$-uniform $\mu$-near-regular hypergraph of girth at least $4$ has a $t$-shallow hitting edge set.
Let $W(x)$ be the Lambert $W$-function, i.e., the unique function defined by $x = W(x) \exp(W(x))$ for all real numbers $x \geq -e^{-1}$.
\begin{thm}
    \label{thm:upper-bound-high-girth}
    Let $H$ be an $r$-uniform $\mu$-near-regular hypergraph of girth at least $4$.
    Then, $H$ has a $t(\mu, r)$-shallow hitting edge set, where 
    \[
        t(\mu, r) = c \mu \ln(r) (1+o_{\mu r}(1)) = O(\mu \ln r)
    \]
    and $c = \euler^{1+W(2/(\euler \mu))} = \euler + o_\mu(1)$.
\end{thm}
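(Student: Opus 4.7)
The plan is to apply Lemma~\ref{lemma:upper-bound-high-girth} with an explicit choice of $t$ that is asymptotically $c\mu\ln r$. First I would dispose of the low-degree regime where the hypothesis $\delta \geq \ln r + 2$ of the lemma fails: if $\delta \leq \ln r + 1$, then $\mu$-near-regularity forces $\Delta \leq \mu\delta \leq \mu(\ln r + 1)$, so the entire edge set $E$ is already a $\Delta$-shallow hitting edge set; since $c \geq \euler > 2$, once $\mu r$ is sufficiently large, $t(\mu,r) \geq c\mu\ln r/2 \geq \mu(\ln r + 1) \geq \Delta$ and $E$ itself witnesses the claim.

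For the main case $\delta \geq \ln r + 2$, $\mu$-near-regularity gives
\[
  \eta = \frac{\Delta}{\delta - 1} \leq \mu \cdot \frac{\delta}{\delta-1} = \mu\bigl(1 + O(1/\ln r)\bigr).
\]
Writing $L = \ln r + 1 + \euler^{-1}$, condition~\eqref{eq:01-triangle-free-upper-bound} becomes $t!/(t+1) \geq \euler r^2 (\eta L)^{t+1}$ with $\eta L = \mu\ln r \cdot (1 + o_{\mu r}(1))$. I would then set $t = \lceil c\mu\ln r \rceil$ (with a small lower-order correction, as in the proof of Theorem~\ref{theorem:upper-bound-uniform-regular-hypergraph}, to absorb the $o(1)$ factors), where $c = \euler^{1+W(2/(\euler\mu))}$.

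Verification of~\eqref{eq:01-triangle-free-upper-bound} proceeds via Stirling's formula $t! \geq \sqrt{2\pi t}(t/\euler)^t$, which upon taking logarithms reduces the inequality to
\[
  t\ln t - t - O(\ln t) \;\geq\; (t+1)\ln(\eta L) + 1 + 2\ln r.
\]
Substituting $t = c\mu\ln r$ and $\ln(\eta L) = \ln \mu + \ln\ln r + o(1)$, the dominant terms $t\ln(\mu\ln r)$ cancel on both sides, leaving $c\mu\ln r\,(\ln c - 1) \geq 2\ln r + o(\mu\ln r)$, equivalently $c(\ln c - 1) \geq 2/\mu + o(1/\mu)$. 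Writing $c = \euler^{1+y}$ transforms the defining equation $c(\ln c - 1) = 2/\mu$ into $y\euler^y = 2/(\euler\mu)$, whose unique non-negative solution is $y = W(2/(\euler\mu))$; this pins down $c$, and since $W(2/(\euler\mu)) \to 0$ as $\mu \to \infty$, we recover $c = \euler + o_\mu(1)$.

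The main obstacle is the bookkeeping of lower-order terms: the precise value of $c$ depends delicately on the constant $2$ coming from the $r^2$ factor in~\eqref{eq:01-triangle-free-upper-bound}, so one must verify that the multiplicative error in $\eta/\mu$, the additive constant in $L - \ln r$, the Stirling correction, and the rounding in the choice of $t$ all contribute only to the outer $(1 + o_{\mu r}(1))$ factor in the stated $t(\mu,r)$.
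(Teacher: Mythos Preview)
Your proposal is correct and follows essentially the same approach as the paper: handle the low-degree case trivially, then apply Lemma~\ref{lemma:upper-bound-high-girth} with Stirling's formula and solve for the optimal constant via the Lambert $W$ function. The only difference is cosmetic: where you leave the lower-order correction to $t$ implicit (flagging it as bookkeeping), the paper makes it explicit by perturbing the argument of $W$, setting $t = 1 + \lceil \euler^{1+h(\eta,r)}\eta(\ln r + 1 + \euler^{-1})\rceil$ with $h(\eta,r) = W\bigl(\tfrac{2}{\euler}(\tfrac{1}{\eta} + \tfrac{\ln(\eta L)}{\eta L})\bigr)$, so that inequality~\eqref{eq:01-triangle-free-upper-bound} holds exactly rather than only asymptotically.
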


\begin{proof}
    Let $\delta$ be the minimum degree and $\Delta$ be the maximum degree of $H=(V,E)$.
    If $\delta \leq \ln r + 2$ then $\Delta \leq \mu (\ln r + 2)$ and the edge set $E$ is a $t$-shallow hitting edge set.
    Thereby, assume that $\delta > \ln r + 2$. 
    Let $\eta = \Delta / (\delta-1)$.
    Note that $\mu \leq \eta = \mu \delta / (\delta - 1) = (1+o_r(1)) \mu$ for $\delta > \ln r + 2$.
    We show that
    \begin{equation}
        \begin{split}
            \label{eq:t(r)-high-girth}
            t &= 1 + \left\lceil \euler^{1+h(\eta, r)} \eta (\ln r + 1 + \euler^{-1}) \right\rceil
            = \euler^{1+W(2/(\euler \eta))} \eta \ln(r) (1+o_{\eta r}(1))
        \end{split}
    \end{equation}
    with
    \[
        h(\eta, r)
        = W\left(\frac{2}{\euler} \cdot \left( \frac{1}{\eta} + \frac{\ln(\eta (\ln r + 1 + \euler^{-1}))}{\eta (\ln r + 1 + \euler^{-1})} \right)\right)
        = W\left( \frac{2}{\euler \eta} \right) (1+o_{\eta r}(1))
    \]
    satisfies Equation~\eqref{eq:01-triangle-free-upper-bound}.
    Indeed, with Stirling's Formula $t! \geq \sqrt{2 \pi t} (t/\euler)^t$ and with $(t+1)/\sqrt{2 \pi t} \leq t$ for $t \geq 1$ it follows that
    \begin{multline*}
            \euler \frac{t+1}{t!} r^2 \eta^{t+1} (\ln r + 1 + \euler^{-1})^{t+1}
            \leq \euler \frac{t+1}{\sqrt{2 \pi t}} r^2 \eta (\ln r + 1 + \euler^{-1}) \left( \frac{\eta(\ln r + 1 + \euler^{-1})}{t/\euler} \right)^t\\
            \leq \euler^2 r^2 \eta^2 (\ln r + 1 + \euler^{-1})^2 \left( \frac{\eta(\ln r + 1 + \euler^{-1})}{(t-1)/\euler} \right)^{t-1}~.
    \end{multline*}
    We define $s = \euler \eta (\ln r + 1 + \euler^{-1})$ and $a = (rs)^2$.
    Observe that $h(\eta, r)=W((\ln a)/s)$.
    By using Equation~\eqref{eq:t(r)-high-girth}, we get
    \begin{equation*}
        \euler \frac{t+1}{t!} r^2 \eta^{t+1} (\ln r + 1 + \euler^{-1})^{t+1}
        \leq a \exp\left(-s \cdot h(\eta, r) \euler^{h(\eta, r)}\right)
        = a \exp\left(-s \cdot \frac{\ln a}{s}\right)
        = 1~.
    \end{equation*}

    Therefore, we have $t=\euler^{1+W(2/(\euler \eta))} \eta \ln(r) (1+o_{\eta r}(1))$.
    With $\mu \leq \eta \leq (1+o_r(1))\mu$, the theorem follows.
\end{proof}

For $\mu=1$ we get the following corollary.
\begin{cor}
    \label{cor:upper-bound-high-girth-regular}
    Every $r$-uniform regular hypergraph of girth at least $4$ has a $t(r)$-shallow hitting edge set, where
    \[
        t(r) = c \ln(r) (1+o_{r}(1)) = O(\ln r)
    \]
    and $c = \euler^{1+W(2/\euler)} = 4.319\dots$.
\end{cor}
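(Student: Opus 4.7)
The plan is immediate: the corollary is just the $\mu=1$ specialization of \autoref{thm:upper-bound-high-girth}. So I would open the proof by observing that an $r$-uniform regular hypergraph (with minimum degree $\geq 1$) is by definition $1$-near-regular in the sense of \autoref{subsec:preliminaries}, so \autoref{thm:upper-bound-high-girth} applies with $\mu=1$.

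Then I would simply substitute $\mu=1$ into the formulas. The asymptotic expression $t(\mu,r) = c\mu \ln(r)(1+o_{\mu r}(1))$ becomes $t(1,r) = c \ln(r)(1+o_r(1))$ because $o_{\mu r}(1)$ collapses to $o_r(1)$ when $\mu$ is fixed at $1$ (so $\mu r \to \infty$ iff $r \to \infty$). Similarly the constant $c = \euler^{1+W(2/(\euler\mu))}$ becomes $c = \euler^{1+W(2/\euler)}$, and evaluating the Lambert $W$-function numerically (e.g.\ via the defining equation $x = W(x) e^{W(x)}$ applied at $x = 2/\euler \approx 0.7358$, giving $W(2/\euler) \approx 0.4590$) yields $c \approx \euler^{1.4590} \approx 4.319$.

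There is essentially no obstacle here, since all the real work was done in \autoref{lemma:upper-bound-high-girth} and \autoref{thm:upper-bound-high-girth}. The only thing to be slightly careful about is the $o$-notation bookkeeping when specializing the two-parameter asymptotic to a one-parameter asymptotic, but this is routine. The proof should therefore take only a couple of lines.
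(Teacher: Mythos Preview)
Your proposal is correct and matches the paper's approach exactly: the paper simply states ``For $\mu=1$ we get the following corollary'' before stating \autoref{cor:upper-bound-high-girth-regular}, so specializing \autoref{thm:upper-bound-high-girth} to $\mu=1$ is precisely what is intended. Your remarks on the $o$-notation bookkeeping and the numerical evaluation of $c$ are fine and even more detailed than what the paper provides.
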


\subsection{Lower bounds for uniform regular partite hypergraphs}
\label{subsec:lower-bound-shallow-hitting}

For the lower bounds, we seek to find for infinitely many values of $r$, an integer $t = t(r)$ (ideally large in terms of $r$) and an $r$-uniform $r$-partite regular hypergraph $H_t$ with no $(t-1)$-shallow hitting edge set.
In fact, we shall construct $H_t$ to be $t$-regular, which translates the absence of a $(t-1)$-shallow hitting edge set in $H_t$ to the absence of a proper vertex $2$-coloring of its dual $H_t^*$.
By using the dualization, we can employ constructions of $t$-uniform non-$2$-colorable hypergraphs with maximum degree $r$ (ideally small in terms of $t$), which are a classical topic initiated by Erd\H{o}s and Lov\'asz~\cite{EL73}.
The currently best bound is due to Kostochka and R\"odl, where we cite here only the case of two colors and girth at least four.

\begin{thm}[Kostochka and R\"odl~\cite{KR10}]\label{thm:3-chromatic-hypergraph}
    For any $t \geq 2$ there exists a $t$-uniform $r$-regular non-$2$-colorable hypergraph $H$ of girth at least four, with $r = \lceil t2^{t-1} \ln 2 \rceil$.
    Moreover, $H$ is the union of $r$ perfect matchings.
\end{thm}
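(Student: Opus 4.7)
The plan is a probabilistic construction: choose $n$ large with $t \mid n$, let $M_1, \ldots, M_r$ be independent uniformly random perfect matchings on $[n]$ (each is a partition of $[n]$ into $t$-subsets), and set $H = M_1 \cup \cdots \cup M_r$. By construction $H$ is $t$-uniform, $r$-regular, and a union of $r$ perfect matchings. It remains to show that with positive probability $H$ is simultaneously non-$2$-colorable and has girth at least four.

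For non-$2$-colorability, fix a colouring $\chi\colon [n] \to \{0,1\}$ with colour classes of sizes $a$ and $n-a$. The probability that a single random perfect matching contains no $\chi$-monochromatic edge can, after standard symmetrisation and a convexity argument, be maximised at $a = n/2$ and bounded from above by roughly $(1 - 2^{1-t})^{n/t}(1+o(1))$. By independence of the $r$ matchings, the probability that $\chi$ is a proper $2$-colouring of $H$ is at most $(1 - 2^{1-t})^{nr/t}(1+o(1))$. Union-bounding over all $2^n$ colourings, the expected number of proper $2$-colourings is at most $\bigl( 2 (1-2^{1-t})^{r/t} \bigr)^n (1+o(1))$. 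Using $-\ln(1-x) > x$ gives $-\ln(1-2^{1-t}) > 2^{1-t}$, so the condition $r > t 2^{t-1} \ln 2$ is sufficient to make this bound strictly less than $1$ (for $n$ large enough), which is exactly $r = \lceil t 2^{t-1} \ln 2 \rceil$.

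For girth, a routine first-moment calculation shows that the expected number of "Berge-$2$-cycles" (pairs of distinct edges sharing two or more vertices) and "Berge-$3$-cycles" is $O(1)$ as $n\to\infty$, with the implicit constant depending only on $t$ and $r$. Hence with probability bounded away from $0$ the number of short cycles is small (say, at most some constant $C = C(t,r)$). Combining this with the non-$2$-colourability estimate — by taking $n$ large enough so that both "events" occur with positive probability simultaneously — yields, via a union / alteration argument, a hypergraph $H'$ of girth at least four that is still non-$2$-colourable. To keep the $r$-regular / union-of-matchings structure under alteration, one removes the (boundedly many) vertices involved in short cycles together with their entire matching partners across all $r$ matchings; this deletes $O_{t,r}(1)$ vertices from each matching, and the remainder is still a disjoint union of $r$ perfect matchings on the surviving vertices, hence $t$-uniform and $r$-regular.

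The main obstacle, as always in these "random union of matchings" arguments, is reconciling the three requirements simultaneously: non-$2$-colourability forces $r$ to be at least $t 2^{t-1}\ln 2$, the girth-four condition forces one to edit the construction, and the "union of perfect matchings" structure has to survive the edit. The delicate point is therefore the clean-up step — ensuring that the alteration used to kill short cycles does not damage either regularity or non-$2$-colourability. Making the deletion \emph{matching-aligned} (removing whole matching-mates together) is what makes this compatible with the required structural conclusion, and is the step where the explicit construction of Kostochka and Rödl provides the key idea.
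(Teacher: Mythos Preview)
The paper does not give its own proof of this statement; it is quoted from Kostochka and R\"odl, and the only argument offered is the Remark immediately following it, which sketches how their construction already yields the ``union of $r$ perfect matchings'' property. That construction is structurally different from yours: one starts with a $T$-uniform $r$-regular hypergraph $H_0$ of girth at least four, where $T = m_0 t$ for a large $m_0$, and then replaces each $T$-edge by a random partition into $m_0$ blocks of size $t$. The point is that girth at least four is inherited \emph{deterministically} from $H_0$ (partitioning edges cannot create short Berge cycles), so no clean-up step is needed, and the randomness is used only inside each $T$-edge to force a monochromatic $t$-block under any $2$-colouring. If $H_0$ is itself chosen as a union of $r$ perfect matchings, so is the output.

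Your direct random-matchings approach has a genuine gap precisely at the clean-up step you flag as delicate. The proposed ``matching-aligned'' deletion does not stay bounded: if you delete a bad vertex $v$ together with its $t-1$ partners in each $M_i$, those partners lie in further edges of the other matchings $M_j$, which must then also be deleted to keep each $M_j$ a perfect matching on the surviving vertex set, and this cascades with no reason to terminate in $O_{t,r}(1)$ steps. Even granting a bounded deletion, your first-moment bound only shows that the expected number of proper $2$-colourings of the full $H$ is $o(1)$; it does not show that every $2$-colouring has many monochromatic edges, so removing $O(1)$ edges can create a proper $2$-colouring of the remainder. (A robust non-$2$-colourability statement of that type would require $r$ strictly above the threshold, losing the exact value $\lceil t 2^{t-1}\ln 2\rceil$.) Finally, the asserted bound $(1-2^{1-t})^{n/t}(1+o(1))$ for the probability that a single random perfect matching avoids monochromatic edges is not a ``standard symmetrisation and convexity'' fact: the matching constraint introduces real dependencies, and this estimate is itself non-trivial to justify.
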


\begin{rem}
    Kostochka and R\"odl do not claim that $H$ is the union of $r$ perfect matchings, but their construction is easily tweaked to admit this extra property.
    In fact, they start with any $T$-uniform $r$-regular $H_0$ of large enough girth (girth at least four is enough for us) for some specific large value of $T$ of the form $T = m_0 \cdot t$.
    Then, each hyperedge of $H_0$ is replaced by a certain (randomly chosen) matching of $m_0$ hyperedges of size $t$ each.
    Now we can choose $H_0$ to be the union of $r$ perfect matchings (a random choice on a large enough vertex set will do), which implies that also $H$ is the union of $r$ perfect matchings.
\end{rem}

\begin{cor}
\label{cor:lower-bound-girth-four}
    For any $t \geq 2$ there exists an $r$-uniform $r$-partite $t$-regular hypergraph of girth at least four that has no $(t-1)$-shallow hitting edge set, where $r = \lceil t2^{t-1} \ln 2 \rceil$.
\end{cor}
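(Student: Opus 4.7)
The plan is to take the $t$-uniform $r$-regular non-$2$-colorable hypergraph $H$ of girth at least four from \autoref{thm:3-chromatic-hypergraph} (with $r = \lceil t 2^{t-1} \ln 2\rceil$ and $H$ decomposed into $r$ perfect matchings $M_1,\dots,M_r$), and show that its dual $H^* = (V^*,E^*)$ is the desired hypergraph. The idea is exactly the dualization mentioned in the paragraph preceding \autoref{thm:3-chromatic-hypergraph}: a $(t-1)$-shallow hitting edge set in $H^*$ corresponds to a proper $2$-coloring of $H$, which does not exist.

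First I would verify the structural parameters of $H^*$. Since $V^* = E(H)$ and each $v \in V(H)$ gives an edge $\incident(v) \in E^*$ of size $\degree_H(v) = r$, the hypergraph $H^*$ is $r$-uniform. Conversely, each $e \in V^*$ lies in exactly $|e| = t$ edges of $H^*$, so $H^*$ is $t$-regular. For $r$-partiteness, partition $V^* = E(H)$ into the $r$ perfect matchings $M_1,\dots,M_r$. For any $v \in V(H)$ and any $i$, since $M_i$ is a perfect matching there is exactly one edge of $M_i$ containing $v$, so $|\incident(v) \cap M_i| = 1$. Hence each edge of $H^*$ meets each part in exactly one vertex.

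Next I would transfer the girth condition from $H$ to $H^*$. A cycle of length $k$ in $H^*$ is a sequence $e_1 \incident(v_1) e_2 \incident(v_2) \cdots e_k \incident(v_k) e_1$ with the $e_i \in E(H)$ distinct and the $v_i \in V(H)$ distinct, such that $v_i \in e_i \cap e_{i+1}$ (indices mod $k$). But then $v_1 e_2 v_2 e_3 \cdots v_k e_1 v_1$ is a cycle of length $k$ in $H$, using the same indices. So any cycle of length $k \leq 3$ in $H^*$ would yield a cycle of length $\leq 3$ in $H$, contradicting that $H$ has girth at least four; thus $H^*$ has girth at least four as well.

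Finally, I would argue that $H^*$ has no $(t-1)$-shallow hitting edge set. A set $M \subseteq E^*$ corresponds to a subset $W \subseteq V(H)$ via $M = \{\incident(v) : v \in W\}$. Hitting means every $e \in V^* = E(H)$ is covered, i.e.\ $e \cap W \neq \emptyset$ for each $e \in E(H)$. Being $(t-1)$-shallow means that for each $e \in V^*$, at most $t-1$ of the $t$ edges of $H^*$ containing $e$ are in $M$; equivalently, $|e \cap W| \leq t-1$, i.e.\ $e \not\subseteq W$, for each $e \in E(H)$. The two conditions together say that both $W$ and $V(H) \setminus W$ meet every edge of $H$, which is exactly a proper $2$-coloring of $H$. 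Since $H$ is non-$2$-colorable, no such $M$ exists. The main thing to get right here is the bookkeeping between the two dualized conditions (hitting vs.\ shallow), but once the correspondence is set up, the conclusion is immediate.
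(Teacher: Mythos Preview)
Your proposal is correct and follows exactly the same approach as the paper: dualize the hypergraph from \autoref{thm:3-chromatic-hypergraph}, read off $r$-uniformity, $t$-regularity, and $r$-partiteness from the parameters of $H$ and its decomposition into perfect matchings, transfer the girth condition, and observe that a $(t-1)$-shallow hitting edge set in the dual is precisely a proper $2$-coloring of $H$. The only difference is that you spell out the girth transfer and the hitting/shallow $\leftrightarrow$ $2$-coloring correspondence in more detail than the paper does.
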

\begin{proof}
    Let $H_t$ be the dual of the hypergraph $H$ from Theorem~\ref{thm:3-chromatic-hypergraph}.
    Then $H_t$ is $r$-partite (hence $r$-uniform) since $H$ is the union of $r$ perfect matchings, $H_t$ is $t$-regular since $H$ is $t$-uniform, and $H_t$ has girth at least four since $H$ has girth at least four.
    Finally, a $(t-1)$-shallow hitting edge set $M$ in $H_t$ would correspond to a vertex set $A$ of $H$ such that every edge of $H$ contains at least one vertex of $A$ (since $M$ is hitting) and at most $t-1$ vertices of $A$ (since $A$ is $(t-1)$-shallow).
    Thus $(A, V_H-A)$ would be a proper vertex $2$-coloring of $H$, which is a contradiction.
\end{proof}

In the next theorem, we construct an $r$-uniform $r$-partite regular hypergraph which only has $t$-shallow hitting edge sets for $t \geq \log_2(r+1)$.

\begin{figure}
    \centering
    \includegraphics{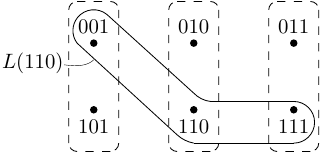}
    \caption{The construction of the hypergraph $H'$ of Theorem~\ref{theorem:01-truncated-projective-space} with $t=2$. The vertices are elements of $\mathbb{F}_2^{t+1}$. Two vertices $x$ and $x'$ are in the same part if $x+x'=x_0$ with $x_0=(1,0,\dots,0)^T$. The set $L(a)$ is the set of all $x \in \mathbb{F}_2^{t+1}\setminus\{0\}$ with $a^T x = 0$. The set of edges of $H'$ is the set of all $L(a)$ that do not contain $x_0$.}
    \label{fig:truncated-projective-space}
\end{figure}
\begin{thm} 
    \label{theorem:01-truncated-projective-space}
    Let $t \geq 2$ be an integer.
    There exists an $r$-uniform $r$-partite $2^{t-1}$-regular hypergraph with two vertices per part that has no $(t-1)$-shallow hitting edge set, where $r=2^t-1$, i.e., $t = \log_2(r+1)$.
\end{thm}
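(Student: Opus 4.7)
The plan is to construct $H'$ exactly as sketched in Figure~\ref{fig:truncated-projective-space}: take vertex set $V=\mathbb{F}_2^{t+1}\setminus\{0,x_0\}$ with $x_0=(1,0,\dots,0)^{T}$, pair each vertex $x$ with $x+x_0$ to obtain $r=2^t-1$ parts of size two, and take as edges the sets $L(a)=\{v\in V: a^{T}v=0\}$ for each $a\in\mathbb{F}_2^{t+1}$ with $a_1=1$. My first step would be to verify the parameters: the identity $a^{T}(x+x_0)=a^{T}x+1$ shows that each edge meets each pair in exactly one vertex, so $H'$ is $r$-partite with $r$ parts; each $L(a)$ is a hyperplane of $\mathbb{F}_2^{t+1}$ with $0$ removed, hence has $2^t-1=r$ elements; and for fixed $v\in V$, the set of $a$'s with $a_1=1$ and $a^{T}v=0$ is an affine subspace of codimension $2$ in $\mathbb{F}_2^{t+1}$, of size $2^{t-1}$, so $H'$ is $2^{t-1}$-regular.

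For the absence of a $(t-1)$-shallow hitting edge set, the key move is to translate the problem into a question about binary linear codes. Given a hypothetical such edge set $M$, let $S\subseteq\{a\in\mathbb{F}_2^{t+1}: a_1=1\}$ index its edges and form the matrix $A\in\mathbb{F}_2^{k\times(t+1)}$ whose rows are the elements of $S$, where $k=|S|$. Because $(a)_1=1$ for every $a\in S$, the first column of $A$ is the all-ones vector $\mathbf{1}$, and in particular $Ax_0=\mathbf{1}$. The crucial observation is that $\degree_M(v)=k-w(Av)$, where $w(\cdot)$ denotes Hamming weight. From the hitting condition I would deduce that $\ker A=\{0\}$: if $u\in\ker A\setminus\{0\}$, then (since $Ax_0=\mathbf{1}\neq 0$ forces $u\neq x_0$) the vector $v:=x_0+u$ lies in $V$ and satisfies $Av=\mathbf{1}$, i.e.\ $\degree_M(v)=0$. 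Hence $A$ is injective, $k\geq t+1$, and $v\mapsto Av$ is a linear bijection from $\mathbb{F}_2^{t+1}$ onto the $(t+1)$-dimensional binary code $C:=\mathrm{Im}(A)\subseteq\mathbb{F}_2^k$; it restricts to a bijection $V\to C\setminus\{0,\mathbf{1}\}$.

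The shallow condition $\degree_M(v)\leq t-1$ for $v\in V$ now reads $w(c)\geq k-t+1$ for every $c\in C\setminus\{0,\mathbf{1}\}$, and since $w(\mathbf{1})=k\geq k-t+1$, the minimum weight of the code $C$ is at least $k-t+1$. I would then finish with one line of coding theory: the Singleton bound applied to the $[k,t+1]$-linear code $C$ gives minimum distance at most $k-(t+1)+1=k-t$, which directly contradicts the lower bound $k-t+1$ just established. The main obstacle is spotting the coding-theoretic reformulation; once one recognizes that the hitting condition encodes injectivity of $A$ together with $\mathbf{1}\in C$, and that the shallow condition is precisely a minimum-weight inequality, the Singleton bound closes the argument in a single step.
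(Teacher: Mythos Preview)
Your proof is correct and takes a genuinely different route from the paper's. The paper proves a structural fact: any $t$ edges $L(a_1),\ldots,L(a_t)$ of $H'$ share a common vertex, because the $t\times(t+1)$ matrix with rows $a_1^T,\ldots,a_t^T$ has a nontrivial kernel over $\mathbb{F}_2$; from this it declares (somewhat tersely) that no $(t-1)$-shallow hitting edge set can exist. You instead encode a hypothetical $(t-1)$-shallow hitting set $M$ as a binary linear code $C=\mathrm{Im}(A)$, extract from the hitting condition that $A$ is injective (so $\dim C=t+1$), from the shallow condition that $C$ has minimum distance at least $k-t+1$, and contradict the Singleton bound $d\leq k-t$. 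What your framing buys is a fully self-contained argument: the paper's passage from ``any $t$ edges intersect'' to ``no $(t-1)$-shallow hitting set'' still needs an extra line to exclude hitting sets of size at most $t-1$ (for instance via the involution $v\mapsto v+x_0$, which shows any $t$ edges also \emph{miss} a common vertex), whereas your two conditions on $C$ handle both regimes at once. What the paper's approach buys is the stronger structural statement that every $t$ edges intersect, which also pins down the maximum $(t-1)$-shallow edge set at size $t-1$. The two are close cousins under the hood: unpacking the Singleton bound for $C$ amounts to finding a nonzero $x$ annihilated by some $t$ rows of $A$, which is exactly the paper's common-vertex computation.
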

\begin{proof}
    This construction is based on the construction of projective spaces of order $2$.
    
    Let $\mathbb{F}_2$ be the finite field of two elements and let $V = \mathbb{F}_2^{t+1} \setminus \{0\}$ be the set of vertices.
    For a vector $a \in \mathbb{F}_2^{t+1}$, we define $L(a)$ to be the set of vertices $x \in V$ with $a^T x = 0$.
    Then, we define the edge set to be $E=\{L(a) \mid a \in \mathbb{F}_2^{t+1} \setminus \{0\}\}$ and define the hypergraph by $H=(V,E)$.
    The vertices of $H$ correspond to the $1$-dimensional subspaces of $\mathbb{F}_2^{t+1}$ and the edges correspond to the $t$-dimensional subspaces of $\mathbb{F}_2^{t+1}$.
    
    Let $L(a)$ be an edge.
    Since $a \neq 0$, there are exactly $2^t$ solutions to the equation $a^T x = 0$ for $x \in \mathbb{F}_2^{t+1}$ and thus, there are $2^t - 1$ solutions to $a^T x = 0$ with $x \in \mathbb{F}_2^{t+1} \setminus \{0\}$.
    Hence, each edge has size $2^t - 1$ and $H$ is $r$-uniform with $r=2^t - 1$.

    Let $x \in V$ be a vertex.
    Analogous, there are $2^t - 1$ solutions of $a^T x = 0$ with $a \in \mathbb{F}_2^{t+1} \setminus \{0\}$ and thus, $H$ is $(2^t - 1)$-regular.

    Observe that each $t$ edges have a common vertex.
    Indeed, let $L(a_1),\dots,L(a_t)$ be $t$ edges of $H$.
    Consider the matrix $A := (a_1,a_2,\dots,a_t) \in \mathbb{F}_2^{(t+1) \times t}$.
    Then, $A^T$ has rank at most $t$ and the nullspace of $A^T$ has dimension at least $1$.
    Thus, there is at least one nonzero solution $x \in V$ to $A^T x = 0$.
    Then, $x$ clearly satisfies all equations $a_i^T x = 0$ for $i=1,\dots,t$ and thus, $x$ is incident to all edges $L(a_1),\dots,L(a_t)$.

    From $H$, we construct an $r$-uniform $r$-partite $2^{t-1}$-regular hypergraph $H'$ by ``truncating'' the hypergraph $H$.
    Let $x_0=(1,0,\dots,0)^T \in V$.
    To obtain $H'=(V',E')$ from $H=(V,E)$, we remove the vertex $x_0 \in V$ and all incident edges of $x_0$.
    See also Figure~\ref{fig:truncated-projective-space} for the construction of $H'$ with $t=2$.
    Note that $H'$ is $r$-uniform and every set of $t$ edges has a vertex in common.
    Moreover, $H'$ has $|V'|=2^{t+1}-2=2r$ vertices.
    We need to show that $H'$ is $2^{t-1}$-regular and $r$-partite with parts of size $2$.
    Since every $t$ edges have a vertex in common, it follows that there exists no $(t-1)$-shallow hitting edge set.
    
    We define that two vertices $x,x' \in V$ are in the same part of $H'$ if $x + x' = x_0$.
    Clearly, each edge $L(a)$ of $H$ which is incident to both $x$ and $x'$ is incident to $x_0$, since $a^T x_0 = a^T x + a^T x' = 0 + 0 = 0$.
    Thus, no edge of $H'$ contains two vertices of the same part and $H'$ is $r$-uniform $r$-partite with parts of size $2$.

    Moreover, for every vertex $x \in V \setminus \{x_0\}$ in $H$ there are $2^{t-1}-1$ edges $L(a)$ which are incident to both $x_0$ and $x$, which can be seen as follows.
    Since $x_0=(1,0,\dots,0)$ is a solution of $a^T x_0 = 0$, it must hold that $a_1 = 0$.
    Then, there are $2^{t-1}-1$ solutions $a$ to $a^T x = 0$ with $a_1=0$, since $x \in V \setminus \{x_0\}$.
    Thus, we removed $2^{t-1}-1$ incident edges for each vertex $x \in V \setminus \{x_0\}$ from $H$ to obtain $H'$.
    Then, we can conclude that $H'$ is $d$-regular with $d=2^t-1-(2^{t-1}-1) = 2^{t-1}$.
\end{proof}

\section{Maximum Shallow Edge Sets}
\label{sec:maximum-shallow-edge-sets}

In this section, we consider the maximum possible size of shallow edge sets in $r$-uniform $\mu$-near-regular hypergraphs.
In comparison to $t$-shallow hitting edge sets, we relax the condition $1 \leq \degree(v) \leq t$ to $\degree(v) \leq t$ for all vertices $v \in V$.
Additionally, we are not interested in minimizing $t$ for a given $r$ and $\mu$ (as for shallow hitting edge sets) but in maximizing the size of a $t$-shallow edge set for given $t$, $r$ and $\mu$ in $r$-uniform $\mu$-near-regular hypergraphs.
We show that every $r$-uniform $\mu$-near-regular hypergraph $H$ with $n$ vertices has a $t$-shallow edge set of size
\begin{equation*}
    \Omega\left( \frac{n t}{\mu r^{1+1/t}}\right)~,
\end{equation*}
whenever $\Delta \geq t$ for the maximum degree $\Delta$ of $H$.
Moreover, we show in Section~\ref{subsec:04-03-combinatorial-designs}, that the lower bound above is tight for regular $r$-uniform hypergraphs.

For this, we use the following framework.
For a hypergraph $H$, let $\nu_t(H)$ be the size of a maximum $t$-shallow edge set in $H$.
Note that $\nu_1(H)$ is the size of a maximum matching in $H$.
Let $H$ be an $r$-uniform hypergraph with $n$ vertices.
Observe that $\nu_t(H) \leq n t / r$ is an upper bound on the size of a $t$-shallow edge set in $H$ since every vertex can be covered at most $t$ times and every edge has size $r$.
If $H$ is an $r$-uniform hypergraph with $n$ vertices, we define by $\eta_t(H) = \nu_t(H) / (n t / r)$.
We are particularly interested in bounding $\eta_t(H)$ for such hypergraphs $H$.
For this, let $\mathcal{H}_t(r,\mu)$ be the set of all $r$-uniform $\mu$-near-regular hypergraphs with $\Delta(H) \geq t$.
We only consider such hypergraphs with $\Delta(H) \geq t$ since for $\Delta(H) < t$, the whole edge set is trivially a maximum $t$-shallow edge set.
We define
\begin{equation*}
    \eta_t(r,\mu) = \inf_{H \in \mathcal{H}_t(r,\mu)} \eta_t(H)
\end{equation*}
Moreover, we define $\eta_t(r) = \eta_t(r, 1)$.
Observe that $0 \leq \eta_t(H) \leq 1$ for every hypergraph $H \in \mathcal{H}_t(r,\mu)$ and thus $0 \leq \eta_t(r,\mu) \leq 1$ for every positive integer $r$ and every real number $\mu \geq 1$.

We show in Section~\ref{subsec:03-01-lower-bound} that
\begin{equation*}
    \eta_t(r, \mu) \geq \frac{1-o_t(1)}{\euler} \cdot \mu^{-1} r^{-1/t}~.
\end{equation*}
This result is tight for $\mu = 1$ up to a constant factor.
In fact, we show that for every real number $\epsilon > 0$ there exist infinitely many positive integers $r$ such that
\begin{equation*}
    \eta_t(r) \leq (1+\epsilon) \cdot r^{-1/t}
\end{equation*}
by providing a construction through combinatorial designs.

\subsection{A lower bound for near-regular hypergraphs}
\label{subsec:03-01-lower-bound}

\begin{lem}
\label{lemma:03-01-lower-bound}
Let $r \geq 2$, $t$ and $k$ be positive integers.
Let $H=(V,E)$ be an $r$-uniform hypergraph of maximum degree $\Delta$.
If
\begin{equation}
\label{eq:03-01-lower-bound}
    \frac{t!}{t+1} \left( \frac{k}{\Delta} \right)^t \geq \euler r
\end{equation}
then there exists a partition of $E$ into $k$ disjoint $t$-shallow edge sets $M_1,\dots, M_k$.
\end{lem}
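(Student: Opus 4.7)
The plan is to use the Lovász Local Lemma in essentially the same style as Lemma~\ref{lemma:upper-bound-uniform-regular-hypergraph}, but now with a much simpler random experiment: color each edge independently and uniformly at random with one of $k$ colors, and let $M_i$ be the set of edges receiving color $i$. This produces a partition $E = M_1 \dot\cup \cdots \dot\cup M_k$ by construction, so the only thing to verify is that, with positive probability, every $M_i$ is $t$-shallow, i.e.\ no vertex has $t+1$ edges of the same color.

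The bad events naturally indexed by pairs $(F,i)$ where $F \in \Fcal$ is a $(t+1)$-element subset of $\incident(v)$ for some $v \in V$ (as in the proof of Lemma~\ref{lemma:upper-bound-uniform-regular-hypergraph}) and $i \in \{1,\dots,k\}$. Let $A_{F,i}$ be the event that every edge of $F$ receives color $i$. Since the colors are chosen independently and uniformly,
\[
    \Prob[A_{F,i}] = \left(\frac{1}{k}\right)^{t+1} =: p.
\]
Two bad events $A_{F,i}$ and $A_{F',i'}$ depend on disjoint sets of random variables whenever $F \cap F' = \emptyset$, so the dependency graph takes $(F,i) \sim (F',i')$ iff $F \cap F' \neq \emptyset$, and this clearly satisfies the mutual-independence condition of the original Lovász Local Lemma.

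Next, I would bound the degree $d$ of this dependency graph. Fix $(F,i)$. For each edge $e \in F$ and each vertex $v' \in e$ (so $r$ choices of $v'$ per $e$), the number of $(t+1)$-element subsets $F' \subseteq \incident(v')$ containing $e$ is at most $\binom{\deg(v')-1}{t} \le \binom{\Delta-1}{t} \le \Delta^t/t!$. Summing over the $(t+1)$ edges of $F$ and the $k$ possible colors $i'$,
\[
    d + 1 \leq (t+1)\, r \, \binom{\Delta-1}{t} \, k \leq \frac{(t+1)\, r\, \Delta^t\, k}{t!}.
\]
Then
\[
    \euler\, p\, (d+1) \leq \euler \cdot \frac{1}{k^{t+1}} \cdot \frac{(t+1)\, r\, \Delta^t\, k}{t!} = \frac{\euler\, r\, (t+1)}{t!} \left(\frac{\Delta}{k}\right)^{t},
\]
which is at most $1$ precisely by hypothesis~\eqref{eq:03-01-lower-bound}. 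Applying the Lovász Local Lemma (Lemma~\ref{lem:lovasz-local-lemma}) yields a coloring for which no $A_{F,i}$ occurs, i.e.\ every color class $M_i$ is $t$-shallow, completing the proof.

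There is no real obstacle here; the proof is a one-shot LLL application and the only thing to watch is the routine bookkeeping of the dependency-degree bound. I would also remark that if $\Delta - 1 < t$ then $\binom{\Delta-1}{t}=0$ and $E$ itself is $t$-shallow (so the conclusion is trivial by setting $M_1=E$ and $M_2=\cdots=M_k=\emptyset$), which means the estimate $\binom{\Delta-1}{t}\le\Delta^t/t!$ does not require a separate case distinction.
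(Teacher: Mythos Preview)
Your proof is correct and follows essentially the same approach as the paper: random $k$-coloring of the edges together with the Lov\'asz Local Lemma, with bad events indexed by $(t+1)$-subsets $F\in\mathcal F$ of edges through a common vertex, and the same dependency-degree count $d+1\le (t+1)r\binom{\Delta-1}{t}$ up to a factor. The only cosmetic difference is that the paper takes the bad event $A_F$ to be ``all edges of $F$ receive the \emph{same} color'' (so $p=k^{-t}$ and no factor $k$ in $d$), whereas you split this into $k$ events $A_{F,i}$ (so $p=k^{-(t+1)}$ and an extra factor $k$ in $d$); the product $p(d+1)$ is identical either way.
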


\begin{proof}
We consider the following random experiment.
For each edge $e \in E$, we pick a color $\chi(e) \in \{1, \dots, k\}$ uniformly at random.
The colors correspond to the sets $M_i$, $i=1, \dots, k$.
We use the Lovász Local Lemma to prove that there exists an edge coloring such that no vertex has $t+1$ incident edges of the same color.
For this, we define $\mathcal{F}$ to be the set of all edge sets $F \subseteq E$ of size $t+1$ such that there exists a vertex $v \in V$ with $F \subseteq \incident(v)$, i.e.
\begin{equation*}
    \mathcal{F} = \{F \subseteq E \colon |F|=t+1, F \subseteq \incident(v) \text{ for some }  v \in V\}~.
\end{equation*}
For a set $F \in \mathcal{F}$, denote by $A_F$ the event that all edges in $F$ received the same color.
Clearly,
\begin{equation*}
    \Prob[A_F] = \frac{1}{k^t} =: p~.
\end{equation*}
We construct a dependency graph $G_\text{D}=(V_\text{D}, E_\text{D})$ with $V_\text{D}=\{A_F \mid F \in \mathcal{F}\}$ and two events $A_F$ and $A_{F'}$ adjacent if and only if $F \cap F' \neq \emptyset$.
Then, each event $A_F$ is mutually independent of all non-adjacent events $A_{F'}$.

In the next step, we have to bound the degree of dependence $d$ of the events $A_F$ for $F \in \mathcal{F}$.
For an edge $e \in E$ and a vertex $v \in V$, denote by $\mathcal{F}_{e,v}$ the set of all edge sets $F \in \mathcal{F}$ with $e \in F$ and $F \subseteq \incident(v)$.
Note that the edge $e$ is contained in all sets $F \in \mathcal{F}_{e,v}$.
Since $F \subseteq \incident(v)$ for all $F \in \mathcal{F}_{e,v}$, there are at most $\binom{\Delta - 1}{t}$ ways to choose the remaining edges.
Hence, the size of each $\mathcal{F}_{e,v}$ can be bounded by
\begin{equation*}
    |\mathcal{F}_{e,v}| \leq \binom{\Delta - 1}{t}~.
\end{equation*}
Let $A_F$ be an arbitrary but fixed event with $F \in \mathcal{F}$.
For every adjacent event $A_{F'}$, it holds that $F \cap F' \neq \emptyset$.
Thus, there must be an edge $e$ in $F \cap F'$.
Moreover, by the definition of $\mathcal{F}$, there exists a vertex $v$ such that $F \subseteq \incident(v)$.
Then, the degree of dependence $d$ can be bounded by 
\begin{equation*}
    d+1 
    \leq \max_{F \in \mathcal{F}} \left| \bigcup\limits_{e \in F} \bigcup\limits_{v \in e} \mathcal{F}_{e,v} \right|
    \leq (t+1)\cdot r \cdot \binom{\Delta - 1}{t} \leq \frac{t+1}{t!} r \Delta^t~.
\end{equation*}
To apply the Lovász Local Lemma, we calculate $\euler p (d+1)$ and use Equation~\ref{eq:03-01-lower-bound}:
\begin{equation*}
    \euler p (d+1) \leq \frac{\euler}{k^t} \cdot \frac{t+1}{t!} r \Delta^t \leq 1~.
\end{equation*}
By the Lovász Local Lemma, the probability that no event $A_F$ with $F \in \mathcal{F}$ occurs is greater than zero.
Thus, there exists an edge coloring $\chi \colon E \to \{1, \dots, k\}$ such that no vertex has $t+1$ incident edges of the same color.
For $i=1, \dots, k$, we define $M_i$ to be the set of all edges of color $i$.
Then, each set $M_i$ is $t$-shallow and it holds that $M_1 \dot\cup\cdots\dot\cup M_k = E$.
\end{proof}

\begin{lem}
\label{lemma:03-01-lower-bound-2}
Let $r \geq 2$ and $t$ be positive integers.
Let $H=(V,E)$ be an $r$-uniform hypergraph of maximum degree $\Delta \geq t$.
Then, there exists a partition of $E$ into $k$ disjoint $t$-shallow edge sets $M_1,\dots, M_k$ for
\begin{equation*}
    k = \frac{\euler \Delta r^{1/t}}{t} \cdot (1+o_t(1))~.
\end{equation*}
\end{lem}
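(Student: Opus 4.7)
The plan is to reduce to \autoref{lemma:03-01-lower-bound} and then verify via Stirling's formula that the choice
\[
    k = \left\lceil \frac{\euler \Delta r^{1/t}}{t} \left( \frac{\euler(t+1)}{\sqrt{2 \pi t}} \right)^{1/t} \right\rceil
\]
satisfies the hypothesis \eqref{eq:03-01-lower-bound}, and that this $k$ has the claimed asymptotic form. So first I would solve the inequality \eqref{eq:03-01-lower-bound} for $k$: rearranged, it reads
\[
    k \geq \Delta \cdot \left( \frac{\euler r (t+1)}{t!} \right)^{1/t},
\]
so any $k$ at least this large guarantees a partition into $k$ disjoint $t$-shallow edge sets, provided $\Delta \geq t$ (which matches the hypothesis of the current lemma and is the regime in which \eqref{eq:03-01-lower-bound} is nontrivial).

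Next I would estimate the right-hand side via Stirling's inequality $t! \geq \sqrt{2\pi t}(t/\euler)^t$, which gives
\[
    \left( \frac{\euler r (t+1)}{t!} \right)^{1/t}
    \leq \frac{\euler}{t} \cdot r^{1/t} \cdot \left( \frac{\euler(t+1)}{\sqrt{2\pi t}} \right)^{1/t}.
\]
Thus our choice of $k$ above is large enough to apply \autoref{lemma:03-01-lower-bound} and produce the desired partition.

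Finally, I would verify the asymptotic form. Taking logarithms of the extra factor,
\[
    \frac{1}{t} \ln\!\left( \frac{\euler(t+1)}{\sqrt{2\pi t}} \right) = \frac{1}{t}\left( 1 + \ln(t+1) - \tfrac{1}{2}\ln(2 \pi t) \right) \to 0 \text{ as } t \to \infty,
\]
so $\left(\euler(t+1)/\sqrt{2\pi t}\right)^{1/t} = 1 + o_t(1)$, and therefore
\[
    k = \frac{\euler \Delta r^{1/t}}{t}(1 + o_t(1)),
\]
as claimed. There is no real obstacle here; the only thing to be a little careful about is that $r$ is treated as a (possibly fixed) parameter while the error term is controlled purely in $t$, which is consistent with the $o_t(1)$ notation used in the statement and matches how $r^{1/t}$ is kept as an explicit factor rather than being folded into the error term.
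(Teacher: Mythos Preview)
Your proof is correct and essentially identical to the paper's: you choose the same value of $k$ (the paper writes it as $\left\lceil \frac{\euler \Delta}{t}\left(\frac{t+1}{\sqrt{2\pi t}}\,\euler r\right)^{1/t}\right\rceil$, which is exactly your expression with the factors regrouped), apply Stirling's inequality $t! \geq \sqrt{2\pi t}(t/\euler)^t$ to verify \eqref{eq:03-01-lower-bound}, invoke \autoref{lemma:03-01-lower-bound}, and then observe that $\bigl(\euler(t+1)/\sqrt{2\pi t}\bigr)^{1/t} = 1 + o_t(1)$. The only cosmetic difference is that you first rearrange \eqref{eq:03-01-lower-bound} into the form $k \geq \Delta\bigl(\euler r(t+1)/t!\bigr)^{1/t}$ before bounding, whereas the paper plugs $k$ directly into the left-hand side; the computations are the same.
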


\begin{proof}
We prove the lemma by showing that
\begin{equation*}
    k = \left\lceil \frac{\euler \Delta}{t} \left( \frac{t+1}{\sqrt{2 \pi t}} \euler r \right)^{1/t}\right\rceil
\end{equation*}
satisfies Equation~\ref{eq:03-01-lower-bound}.
For that, we use Stirling's Formula $t! \geq \sqrt{2 \pi t} (t/\euler)^t$ and the definition of $k$:
\begin{equation*}
    \left( \frac{k}{\Delta}\right)^t \frac{t!}{t+1}
    \geq \left( \frac{t k}{\Delta \euler}\right)^t \frac{\sqrt{2 \pi t}}{t+1}
    \geq \frac{t+1}{\sqrt{2 \pi t}} \euler r \cdot \frac{\sqrt{2 \pi t}}{t+1}
    \geq \euler r~.
\end{equation*}
By Lemma~\ref{lemma:03-01-lower-bound}, there exist $k$ disjoint $t$-shallow edge sets $M_1, \dots, M_k$ with $M_1 \dot\cup\cdots\dot\cup M_k = E$.
With 
\begin{equation*}
    \left( \frac{t+1}{\sqrt{2 \pi t}} \euler \right)^{1/t} = 1 + o_t(1)
\end{equation*}
it holds that
\begin{equation*}
    k = \frac{\euler \Delta r^{1/t}}{t} \cdot (1+o_t(1))~.
\end{equation*}
\end{proof}

\begin{thm}
\label{theorem:03-01-lower-bound}
Let $n$, $t$ and $r\geq 2$ be positive integers and $\mu \geq 1$ be a real number.
Let $H=(V,E)$ be an $r$-uniform $\mu$-near-regular hypergraph with $n$ vertices and maximum degree $\Delta \geq t$.
Then there exists a $t$-shallow edge set of size at least
\begin{equation*}
    \frac{n t}{\euler \mu r^{1+1/t}} \cdot (1-o_t(1))~.
\end{equation*}
\end{thm}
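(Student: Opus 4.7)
The plan is to apply Lemma~\ref{lemma:03-01-lower-bound-2} to partition $E$ into $t$-shallow edge sets, then invoke pigeonhole, and finally use $\mu$-near-regularity to give a lower bound on $|E|$.

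First I would apply Lemma~\ref{lemma:03-01-lower-bound-2} directly to $H$, which is possible since we are given $\Delta \geq t$. This yields a partition of the edge set $E$ into $k$ pairwise disjoint $t$-shallow edge sets $M_1,\dots,M_k$ with
\[
    k = \frac{\euler \Delta r^{1/t}}{t} \cdot (1+o_t(1)).
\]
By the pigeonhole principle, at least one of the parts $M_i$ has size at least $|E|/k$, and this will be our candidate $t$-shallow edge set.

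Next I would estimate $|E|$ from below. By a standard degree count, $|E| \cdot r = \sum_{v \in V} \degree(v) \geq n \delta$, where $\delta$ is the minimum degree of $H$. Since $H$ is $\mu$-near-regular, we have $\delta \geq \Delta/\mu$, and hence
\[
    |E| \geq \frac{n \delta}{r} \geq \frac{n \Delta}{\mu r}.
\]

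Finally I would combine these two estimates. The largest part $M_i$ satisfies
\[
    |M_i| \geq \frac{|E|}{k}
    \geq \frac{n \Delta /(\mu r)}{(\euler \Delta r^{1/t}/t)(1+o_t(1))}
    = \frac{n t}{\euler \mu r^{1+1/t}} \cdot (1-o_t(1)),
\]
where the last step uses $1/(1+o_t(1)) = 1-o_t(1)$. This is the required bound. Since every step is essentially a one-line computation once Lemma~\ref{lemma:03-01-lower-bound-2} is available, there is no real obstacle; the only thing one must be slightly careful about is that the near-regularity hypothesis is used solely to bridge between $\Delta$ (appearing in the partition size $k$) and the average degree $|E|r/n$ (appearing in the pigeonhole lower bound).
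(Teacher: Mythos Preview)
Your proposal is correct and follows essentially the same approach as the paper: apply Lemma~\ref{lemma:03-01-lower-bound-2} to partition $E$ into $k$ $t$-shallow edge sets, use $|E|\geq n\delta/r$ together with $\delta\geq \Delta/\mu$, and conclude by pigeonhole. The only cosmetic difference is that the paper keeps $\delta$ in the computation and substitutes $\delta/\Delta \geq 1/\mu$ at the very end, whereas you replace $\delta$ by $\Delta/\mu$ one step earlier.
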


\begin{proof}
Let $\Delta$ be the maximum degree and $\delta$ be the minimum degree of $H$.
By Lemma~\ref{lemma:03-01-lower-bound-2}, for 
\begin{equation*}
    k = \frac{\euler \Delta r^{1/t}}{t}\cdot (1+o_t(1))
\end{equation*}
there exist $k$ disjoint $t$-shallow edge sets $M_1, \dots, M_k$ with $M_1 \dot\cup\cdots\dot\cup M_k = E$.
It holds that
\begin{equation*}
    |E|=|M_1| + \dots + |M_k| \geq n \delta / r~.
\end{equation*}
By the pigeonhole principle, there exists an edge set $M_i$ of size
\begin{equation*}
    \max_{i \in [k]} |M_i| 
    \geq \frac{n \delta / r}{k} 
    = \frac{n \delta t}{\euler \Delta r^{1+1/t}} \frac{1}{1+o_t(1)}
    = \frac{n t}{\euler \mu r^{1+1/t}} \cdot (1-o_t(1))~.
\end{equation*}
\end{proof}

With Theorem~\ref{theorem:03-01-lower-bound}, we obtain the following lower bound for $\eta_t(r,\mu)$.
\begin{cor}
Let $t$ and $r \geq 2$ be positive integers and let $\mu \geq 1$ be a real number.
Then,
\begin{equation*}
    \eta_t(r,\mu) \geq \frac{1-o_t(1)}{\euler} \cdot \mu^{-1} r^{-1/t}~.
\end{equation*}
\end{cor}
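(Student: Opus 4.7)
The plan is to observe that this corollary follows almost immediately from \autoref{theorem:03-01-lower-bound}, so it is purely a bookkeeping step of translating the absolute lower bound on $\nu_t(H)$ into a lower bound on the ratio $\eta_t(H) = \nu_t(H)/(nt/r)$ and taking the infimum over $\mathcal{H}_t(r,\mu)$.

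Concretely, I would fix an arbitrary $H \in \mathcal{H}_t(r,\mu)$, let $n = |V(H)|$, and apply \autoref{theorem:03-01-lower-bound} (which is exactly applicable since $H$ is $r$-uniform $\mu$-near-regular with $\Delta(H) \geq t$ by definition of $\mathcal{H}_t(r,\mu)$) to obtain a $t$-shallow edge set $M \subseteq E(H)$ with
\[
    |M| \geq \frac{nt}{\euler \mu r^{1+1/t}} \cdot (1-o_t(1)).
\]
Dividing through by $nt/r$ gives
\[
    \eta_t(H) = \frac{\nu_t(H)}{nt/r} \geq \frac{|M|}{nt/r} \geq \frac{1-o_t(1)}{\euler} \cdot \mu^{-1} r^{-1/t}.
\]
Since this bound holds uniformly for every $H \in \mathcal{H}_t(r,\mu)$ (the $o_t(1)$ term depends only on $t$, not on $H$), taking the infimum yields the claimed bound on $\eta_t(r,\mu)$.

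There is no real obstacle here; the only point worth double-checking is that the $o_t(1)$ term coming out of \autoref{theorem:03-01-lower-bound} indeed depends solely on $t$ and is independent of $r$, $\mu$ and the particular hypergraph $H$. Inspecting the proof of that theorem, the $o_t(1)$ arises from the factor $((t+1)\euler/\sqrt{2\pi t})^{1/t}$ in \autoref{lemma:03-01-lower-bound-2}, which is a function of $t$ alone, so the uniformity needed for the infimum step is genuinely there.
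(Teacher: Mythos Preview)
Your proposal is correct and matches the paper's approach exactly: the paper simply states that the corollary follows from \autoref{theorem:03-01-lower-bound} without further argument, and your bookkeeping step of dividing by $nt/r$ and taking the infimum (with the observation that the $o_t(1)$ term depends only on $t$) is precisely what is needed.
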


\subsection{Upper bound via combinatorial designs}
\label{subsec:04-03-combinatorial-designs}

In this section, we show that there exist $r$-uniform ($r$-partite) regular hypergraphs that only have $t$-shallow edge sets of small size.
This result follows from the existence of combinatorial designs, proved by Keevash~\cite{keevash2018existence}.

\begin{dfn}
Let $t, v, k$ and $\lambda$ be positive integers.
A set of \emph{points} $V$ with a multiset $\mathcal{B}$ of subsets of $V$ (called \emph{blocks}) is called a \emph{$t$-$(v,k,\lambda)$-design} if
\begin{enumerate}
    \item $|V|=v$ and
    \item $|B|=k$ for each block $B \in \mathcal{B}$ and
    \item for each set $U \subseteq V$ of size $t$ there exist exactly $\lambda$ blocks $B \in \mathcal{B}$ with $U \subseteq B$.
\end{enumerate}
\end{dfn}

First, we state a well-known property of combinatorial designs.
This theorem gives a necessary condition for the existence of combinatorial designs.
\begin{thm}[\cite{handbookdesigns}]
\label{theorem:04-03-necessary-condition-design}
Let $(V,\mathcal{B})$ be a $t$-$(v,k,\lambda)$-design.
If $I \subseteq V$ is a set of size $0 \leq |I| = i \leq t$, then the number of blocks containing $I$ is
\begin{equation*}
    r_i = \lambda \binom{v-i}{t-i} \left/ \binom{k-i}{t-i}\right.~.
\end{equation*}
\end{thm}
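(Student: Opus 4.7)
The plan is to prove this by a single double-counting argument, namely counting pairs $(T,B)$ where $T$ is a $t$-subset of $V$ with $I \subseteq T$ and $B \in \mathcal{B}$ is a block with $T \subseteq B$. Let $N$ denote the number of such pairs.

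First I would count by fixing $T$. The number of $t$-subsets $T \subseteq V$ with $I \subseteq T$ equals $\binom{v-i}{t-i}$, since one must choose the remaining $t-i$ points from $V \setminus I$. For each such $T$, the defining property of a $t$-$(v,k,\lambda)$-design yields exactly $\lambda$ blocks $B$ with $T \subseteq B$. Hence $N = \lambda \binom{v-i}{t-i}$.

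Next I would count by fixing $B$. Note that a pair $(T,B)$ in our count requires $I \subseteq T \subseteq B$, so in particular $I \subseteq B$; by definition there are $r_i$ such blocks. For each such $B$, the number of $t$-subsets $T$ satisfying $I \subseteq T \subseteq B$ equals $\binom{k-i}{t-i}$, because $T \setminus I$ is an arbitrary $(t-i)$-subset of the $(k-i)$-element set $B \setminus I$. Hence $N = r_i \binom{k-i}{t-i}$.

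Equating the two expressions gives $r_i \binom{k-i}{t-i} = \lambda \binom{v-i}{t-i}$, and solving for $r_i$ yields the claimed formula. There is no real obstacle: the argument is textbook combinatorics resting only on the definition of a $t$-design, and the mild condition $k \geq t$ (implicit in any nontrivial design, and needed for $\binom{k-i}{t-i}$ to be nonzero) ensures the division is well defined. In particular this also shows, as a sanity check, that $r_i$ is independent of the choice of $I$ and depends only on $i$.
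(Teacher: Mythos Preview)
Your proof is correct and is exactly the standard double-counting argument. The paper itself does not prove this theorem but merely cites it from \cite{handbookdesigns}, so there is nothing to compare against; your argument is precisely the textbook proof the citation points to.
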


By Theorem~\ref{theorem:04-03-necessary-condition-design}, we obtain a necessary condition for the existence of a $t$-$(v,k,\lambda)$-design: for all integers $i$ with $0 \leq i \leq t-1$ it must hold that $\binom{k-i}{t-i}$ divides $\lambda \binom{v-i}{t-i}$.
Keevash~\cite{keevash2018existence} recently showed that this condition is also sufficient for large enough $v$.
Here, a $t$-$(v,k,\lambda)$-design $(V, \mathcal{B})$ is called \emph{resolvable} if the set $\mathcal{B}$ of blocks can be partitioned into perfect matchings.

\begin{thm}[Keevash~\cite{keevash2018existence}]
\label{theorem:04-03-keevash}
Suppose $k \geq t \geq 1$ and $\lambda$ are fixed and $v > v_0(k,t,\lambda)$ is sufficiently large such that $k \mid v$ and $\binom{k-i}{t-i} \mid \lambda \binom{v-i}{t-i}$ for all integers $i$ with $0 \leq i \leq t-1$.
Then there exists a resolvable $t$-$(v,k,\lambda)$-design.
\end{thm}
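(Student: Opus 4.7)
The necessity of the divisibility conditions is immediate from \autoref{theorem:04-03-necessary-condition-design}, since each $r_i$ counts blocks and must therefore be a nonnegative integer (and $k \mid v$ is the special case $i=0$ combined with the perfect matching requirement of resolvability). The real content is sufficiency of these conditions for all sufficiently large $v$, plus the resolvability upgrade. My plan is a proof sketch rather than a full proof, since this is the deep content of Keevash's theorem.

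The standard framework I would follow is the \emph{nibble plus absorption} strategy. First, I would build an \emph{absorber}: a carefully chosen multiset $\mathcal{A}$ of $k$-subsets of $V$ with the property that for every sufficiently small, ``divisibility-compatible'' collection $L$ of $t$-subsets of $V$, one can locally rearrange $\mathcal{A}$ to produce a design on $L$. The construction of such an absorber is the innovative algebraic step in Keevash's argument; it uses explicit algebraic templates inside a random host to guarantee that arbitrary admissible defects can be soaked up.

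Second, after reserving $\mathcal{A}$, I would run a semi-random process (Rödl's nibble, iterated) on the remaining $t$-subsets to build a partial packing that covers all but an $o(1)$ fraction of the $t$-subsets by pairwise-almost-disjoint blocks. A concentration/regularity argument then shows that the uncovered ``leftover'' $t$-subsets are sparse and well-distributed. A final clean-up step removes any local obstructions so that the leftover still satisfies the divisibility conditions needed by the absorber; feeding it into $\mathcal{A}$ then produces a complete $t$-$(v,k,\lambda)$-design. For the resolvability strengthening, I would enforce the ``union of perfect matchings'' property throughout: choose the absorber to itself be a union of parallel classes, and at each nibble step add blocks in matchings, so that the final design inherits the parallel-class decomposition (this uses $k \mid v$).

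The hardest step by far is the construction and analysis of the absorber: it must simultaneously be small enough not to dominate the random process, rigid enough that its blocks are ``locally adjustable'', and flexible enough to handle any admissible leftover configuration, all while respecting the divisibility invariants $\binom{k-i}{t-i}\mid \lambda\binom{v-i}{t-i}$ at every intermediate stage. Concretely, the absorber must be built so that when one swaps blocks, the induced changes on each $t$-subset cancel modulo the correct divisors, which is why the construction goes through an algebraic template rather than a purely probabilistic one. The nibble and clean-up steps are by now relatively standard, but matching them to an absorber that works for all admissible $(v,k,t,\lambda)$ is what makes the full proof genuinely long.
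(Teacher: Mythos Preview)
The paper does not prove this statement at all: it is quoted as a black-box result of Keevash~\cite{keevash2018existence} and used only as input to Corollary~\ref{corollary:04-03-keevash} and Theorem~\ref{theorem:04-03-upper-bound}. So there is nothing to compare your proposal against here; the ``paper's own proof'' is simply a citation.

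That said, your sketch is a fair high-level summary of Keevash's strategy (randomised algebraic absorber plus nibble), so it is not wrong as an outline. But you should be aware that this is a genuinely deep theorem whose full proof occupies a long paper, and a sketch at this level of detail would not be accepted as a proof in its own right. For the purposes of the present paper, the correct move is exactly what the authors do: cite Keevash and move on.
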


It immediately follows from Theorem~\ref{theorem:04-03-keevash} that for any positive integers $t$ and $k$ with $k \geq t \geq 1$ there exist $t$-$(n k, k, 1)$-designs for infinitely many $n > n_0(k,t)$.
\begin{cor}
\label{corollary:04-03-keevash}
Suppose $k \geq t \geq 1$ are fixed.
Then there exist infinitely many positive integers $n$ such that there exists a resolvable $t$-$(n k, k, 1)$-design.
\end{cor}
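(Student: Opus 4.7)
The plan is to reduce Corollary~\ref{corollary:04-03-keevash} directly to Theorem~\ref{theorem:04-03-keevash} by exhibiting an infinite arithmetic progression of admissible values $v = nk$. Writing $\lambda = 1$ and $D_i := \binom{k-i}{t-i}$ for $0 \leq i \leq t-1$, the only non-trivial thing to verify is that the divisibility conditions
\[
    D_i \ \Big| \ \binom{v-i}{t-i}, \qquad 0 \leq i \leq t-1,
\]
hold for infinitely many $v$ that are multiples of $k$.

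My first step is to produce a single modulus $M$ that simultaneously forces all $t$ divisibility conditions. Let
\[
    M = \operatorname{lcm}\bigl\{\, D_i \cdot (t-i)! \ : \ 0 \leq i \leq t-1 \,\bigr\}.
\]
I claim that $v \equiv k \pmod{M}$ implies $D_i \mid \binom{v-i}{t-i}$ for every $i$. Indeed, from $v \equiv k \pmod{M}$ one gets $v - j \equiv k - j \pmod M$ for every integer $j$, so termwise
\[
    (v-i)(v-i-1)\cdots(v-t+1) \equiv (k-i)(k-i-1)\cdots(k-t+1) \pmod{M}.
\]
Because $M$ is a multiple of $D_i \cdot (t-i)!$, dividing both sides by $(t-i)!$ (which is legitimate since each side is an integer multiple of $(t-i)!$ by definition of the binomial coefficient) yields $\binom{v-i}{t-i} \equiv \binom{k-i}{t-i} \pmod{D_i}$, and the right-hand side is $0 \pmod{D_i}$. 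This is the key computation.

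Next, I restrict to $v$ of the form $nk$. The congruence $nk \equiv k \pmod{M}$ is $(n-1)k \equiv 0 \pmod{M}$, equivalently $n \equiv 1 \pmod{M / \gcd(M,k)}$. This arithmetic progression contains infinitely many positive integers, so in particular infinitely many $n$ with $nk > v_0(k,t,1)$, where $v_0$ is the threshold provided by Theorem~\ref{theorem:04-03-keevash}. For every such $n$, $v = nk$ satisfies $k \mid v$ (automatic) and the divisibility conditions just verified, so Theorem~\ref{theorem:04-03-keevash} produces a resolvable $t$-$(nk,k,1)$-design.

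I do not expect any serious obstacle: the number-theoretic content is entirely elementary (a CRT-style choice of $M$ plus one congruence for $n$), and all the combinatorial heavy lifting is absorbed into Keevash's theorem. The only place to be slightly careful is the step where one ``divides by $(t-i)!$'' modulo $M$; this is why I built the factor $(t-i)!$ into the definition of $M$, so that the congruence passes cleanly to the binomial coefficients themselves rather than only to the falling factorials.
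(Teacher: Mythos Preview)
Your proof is correct and follows essentially the same strategy as the paper: reduce to Keevash's theorem by exhibiting an explicit arithmetic progression of admissible values $n$ for which $v = nk$ satisfies all the divisibility constraints. Your modulus $M = \operatorname{lcm}\{D_i\cdot(t-i)!\}$ in fact equals $k(k-1)\cdots(k-t+1)$ (each $D_i\cdot(t-i)! = (k-i)\cdots(k-t+1)$ divides the $i=0$ term), which is exactly the paper's choice.

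The only notable difference is in the verification step. The paper avoids your ``divide by $(t-i)!$'' manoeuvre entirely: from $(k-i)\mid(n-1)$ one gets directly that
\[
    \frac{nk-i}{k-i} \;=\; n + \frac{(n-1)\,i}{k-i}
\]
is an integer for each $0 \le i \le t-1$, and hence $\binom{nk-i}{t-i}\big/\binom{k-i}{t-i}$, being a product of such ratios, is an integer. This is slightly cleaner than your congruence argument, since it sidesteps the need to carry the factor $(t-i)!$ through the modulus; but your version is perfectly valid as written.
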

\begin{proof}
We use Theorem~\ref{theorem:04-03-keevash} to prove this corollary.
Clearly, $k \mid n k$ and thus, the first condition is satisfied.
Therefore, it suffices to show that there exist infinitely many positive integers $n$ which satisfy $\binom{k-i}{t-i} \mid \binom{n k-i}{t-i}$ for all integers $i$ with $0 \leq i \leq t-1$.

We claim that for all positive integers $\mu$,
\begin{equation*}
    n= 1 + \mu \cdot k (k-1) (k-2) \cdots (k-t+1)
\end{equation*}
satisfies the divisibility conditions.
Indeed, for all integers $i$ with $0 \leq i \leq t-1$,
\begin{equation*}
    \frac{n k - i}{k - i} = n + \frac{n-1}{k-i} i
\end{equation*}
is a positive integer and therefore,
\begin{equation*}
    \frac{\binom{n k - i}{t - i}}{\binom{k - i}{t - i}}
    = \frac{(n k - i)(n k - i - 1) \cdots (n k - t+1)}{(k - i)(k - i -1)\cdots (k-t+1)}
\end{equation*}
is a positive integer for all integers $i=0,1,\dots,t-1$.
\end{proof}

\begin{thm}
\label{theorem:04-03-upper-bound}
Let $t$ and $k$ be positive integers with $k > t \geq 1$.
Then there exist infinitely many positive integers $n$ such that there exists an $r$-uniform $r$-partite $k$-regular hypergraph $H$ with parts of size $n$ and maximum $t$-shallow edge set of size $t$, where $r=\binom{n k - 1}{t}/\binom{k-1}{t}$.
This shows
\begin{equation*}
    \eta_t(r) \leq \eta_t(H) \leq \frac{1}{1-t/k} \cdot r^{-1/t}
\end{equation*}
for the specified values of $r$.
\end{thm}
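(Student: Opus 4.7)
The plan is to use a resolvable $(t+1)$-design (note the shift from $t$ to $t+1$) and take its ``point-block incidence'' structure as the hypergraph. Since $k > t$ means $k \geq t+1$, Corollary~\ref{corollary:04-03-keevash} (applied with parameter $t+1$ in place of $t$) yields, for infinitely many positive integers $n$, a resolvable $(t+1)$-$(nk, k, 1)$-design $(V,\mathcal{B})$. Let $\mathcal{C}_1,\ldots,\mathcal{C}_s$ be its resolution classes, where each $\mathcal{C}_j$ is a perfect matching of $\mathcal{B}$ on $V$ and hence $|\mathcal{C}_j|=n$. By Theorem~\ref{theorem:04-03-necessary-condition-design}, the total number of blocks is $\binom{nk}{t+1}/\binom{k}{t+1}=n\cdot\binom{nk-1}{t}/\binom{k-1}{t}=nr$, so $s=r$, i.e., the number of resolution classes is exactly $r$.

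Now I define $H$ as the incidence hypergraph of the design, switching the roles of points and blocks. The vertex set of $H$ is $\mathcal{B}$, partitioned into the $r$ parts $\mathcal{C}_1,\ldots,\mathcal{C}_r$ of size $n$ each. For each point $p\in V$, form the edge $e_p=\{B\in\mathcal{B}\colon p\in B\}$; by Theorem~\ref{theorem:04-03-necessary-condition-design} applied with parameter $t+1$, we have $|e_p|=\binom{nk-1}{t}/\binom{k-1}{t}=r$, so $H$ is $r$-uniform. Each block $B\in\mathcal{B}$ lies in $e_p$ for exactly the $k$ points $p\in B$, so $H$ is $k$-regular. Two distinct blocks in the same resolution class $\mathcal{C}_j$ are disjoint, hence no edge $e_p$ contains two vertices from the same part, so $H$ is $r$-partite.

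The crucial step is computing the maximum $t$-shallow edge set. An edge set $M\subseteq\{e_p\colon p\in V\}$ naturally corresponds to a point set $P\subseteq V$ via $M=\{e_p\colon p\in P\}$; a block $B$ is covered $|P\cap B|$ times by $M$, so $M$ is $t$-shallow iff $|P\cap B|\leq t$ for every block $B$. If $|P|\geq t+1$, pick any $(t+1)$-subset $P'\subseteq P$; by the defining property of the $(t+1)$-$(nk,k,1)$-design, there is a (unique) block $B$ containing $P'$, giving $|P\cap B|\geq t+1$, a contradiction. Hence $|M|=|P|\leq t$, and conversely any $t$ points trivially yield a $t$-shallow edge set, so $\nu_t(H)=t$.

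Finally, plugging into $\eta_t(H)=\nu_t(H)/(|V(H)|\,t/r)=t/(nr\cdot t/r)=1/n$ and using
\[
r=\frac{(nk-1)(nk-2)\cdots(nk-t)}{(k-1)(k-2)\cdots(k-t)}\leq\left(\frac{nk}{k-t}\right)^{t},
\]
we obtain $r^{-1/t}\geq (k-t)/(nk)$, whence $\eta_t(H)=1/n\leq r^{-1/t}/(1-t/k)$, as claimed. The main obstacle is simply spotting that one must apply Keevash's theorem with uniformity $t+1$ (not $t$), so that forbidding $(t+1)$ edges through a common vertex corresponds to forbidding $(t+1)$ points lying in a common block of the design; everything else is bookkeeping with binomial coefficients.
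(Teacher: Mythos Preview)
Your proof is correct and follows essentially the same approach as the paper: take a resolvable $(t+1)$-$(nk,k,1)$-design, pass to its dual (your ``incidence hypergraph'' with blocks as vertices and points as edges), and use the design property that any $t+1$ points lie in a common block to bound $\nu_t$. The only cosmetic differences are that the paper phrases the construction as ``the dual hypergraph $H^*$'' rather than writing out $e_p=\{B:p\in B\}$ explicitly, and states the key observation as ``every $t+1$ edges share a vertex'' rather than via your point-set correspondence $M\leftrightarrow P$.
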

\begin{proof}
Let $H=(V,\mathcal{B})$ be a resolvable $(t+1)$-$(n k,k,1)$-design.
This exists due to Corollary~\ref{corollary:04-03-keevash} for infinitely many positive integers $n$.
That is, $|V| = n k$, each block $B \in \mathcal{B}$ has size $|B| = k$ and each set of $t+1$ elements of $V$ is contained in exactly one block in $\mathcal{B}$.
Then, the hypergraph $H$ is $k$-uniform, $r$-regular (because of Theorem~\ref{theorem:04-03-necessary-condition-design}) and has $n k$ vertices and $n r$ edges.
Define $H^*=(V^*,E^*)$ to be the dual hypergraph of $H$.
Then, $H^*$ has $n r$ vertices and $n k$ edges.
Moreover, $H^*$ is $r$-uniform and $k$-regular.
Since the design $H=(V,\mathcal{B})$ is resolvable, the hypergraph $H^*$ is $r$-partite with parts of size $n$.
Note that every set of $t+1$ edges in $H^*$ has a vertex that is incident to all $t+1$ edges.
Thus, the maximum $t$-shallow edge set has size $t$, i.e. $\nu_t(H) = t$.
With
\begin{equation*}
    r = \frac{\binom{n k-1}{t}}{\binom{k-1}{t}}
    = \frac{(n k -1)(n k -2)\cdots (n k-t)}{(k-1)(k-2)\cdots(k-t)}
    \leq \left( \frac{n k}{k-t} \right)^t
\end{equation*}
we obtain $n \geq r^{1/t}(1-t/k)$ and therefore
\begin{equation*}
    \eta_t(r) \leq \eta_t(H) = \frac{t}{tn} = \frac{1}{n} 
    \leq \frac{1}{1-t/k} \cdot r^{-1/t}~.
\end{equation*}
\end{proof}
For a fixed positive integer $t$, $k$ can be chosen arbitrarily large.
Therefore, the following corollary follows immediate.
\begin{cor}
Let $t$ be a fixed positive integer.
For every real number $\epsilon > 0$ there exist infinitely many positive integers $r$ such that
\begin{equation*}
    \eta_t(r) \leq (1+\epsilon) r^{-1/t}~.
\end{equation*}
\end{cor}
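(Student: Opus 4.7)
The plan is to derive this corollary directly from Theorem~\ref{theorem:04-03-upper-bound} by choosing the parameter $k$ large enough in terms of $\epsilon$ so that the prefactor $1/(1-t/k)$ becomes at most $1+\epsilon$, and then invoking that theorem for this fixed $k$ to produce the required infinite family of values of $r$.

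More concretely, given $\epsilon > 0$ and the fixed positive integer $t$, I would first pick any integer $k > t$ satisfying $k/(k-t) \leq 1 + \epsilon$; for instance, $k = \lceil t(1+\epsilon)/\epsilon \rceil + 1$ works, since this gives $t/k \leq \epsilon/(1+\epsilon)$ and hence $1/(1-t/k) \leq 1+\epsilon$. With $t$ and $k$ now fixed, Theorem~\ref{theorem:04-03-upper-bound} produces, for each of infinitely many positive integers $n$, an $r$-uniform $r$-partite $k$-regular hypergraph $H$ with parts of size $n$ and $\eta_t(H) \leq \frac{1}{1-t/k} r^{-1/t}$, where $r = \binom{nk-1}{t}/\binom{k-1}{t}$.

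The final step is to observe that as $n$ ranges over this infinite set of positive integers, the corresponding value $r = \binom{nk-1}{t}/\binom{k-1}{t}$ is a strictly increasing polynomial in $n$ of degree $t$, so it also takes infinitely many distinct positive integer values. For each such $r$, combining the choice of $k$ with the bound from Theorem~\ref{theorem:04-03-upper-bound} yields
\[
    \eta_t(r) \leq \frac{1}{1-t/k} \cdot r^{-1/t} \leq (1+\epsilon) r^{-1/t},
\]
which is exactly the inequality claimed in the corollary.

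There is no real obstacle here; the only thing to be slightly careful about is making sure that the infinitely many admissible values of $n$ given by Corollary~\ref{corollary:04-03-keevash} do indeed map to infinitely many distinct values of $r$, but this is immediate since $r$ is a strictly monotone function of $n$ once $t$ and $k$ are fixed. The proof is therefore essentially a one-line deduction packaged with a careful choice of $k$.
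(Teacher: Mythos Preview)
Your argument is correct and follows exactly the same route as the paper: choose $k$ large enough that $t/k \leq \epsilon/(1+\epsilon)$ and invoke Theorem~\ref{theorem:04-03-upper-bound}. Your write-up is simply a more detailed version, making explicit the choice of $k$ and the observation that distinct values of $n$ yield distinct values of $r$.
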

\begin{proof}
We choose $k$ such that $t/k \leq \epsilon/(1+\epsilon)$ and apply Theorem~\ref{theorem:04-03-upper-bound}.
\end{proof}
Thus, this upper bound asymptotically matches the lower bound obtained in Section~\ref{subsec:03-01-lower-bound} up to a constant factor.

\section{Minimum Degree Conditions}
\label{sec:minimum-degree-conditions}

\subsection{Uniform hypergraphs}
\label{subsec:uniform-hypergraphs}

In this section, we study sufficient conditions on the minimum degree of uniform hypergraphs for the existence of $t$-shallow hitting edge sets.
Let $H=(V,E)$ be an $r$-uniform hypergraph.
If $\hat e$ is an $(r-1)$-set of vertices of $H$, the \emph{neighborhood} $N_{r-1}(\hat e)$ of $\hat e$ is the set of all vertices $v$ such that $\hat e \cup \{v\}$ is an edge in $H$.
We define the \emph{minimum degree} $\delta_{r-1}(H)$ of an $r$-uniform hypergraph to be the minimum of $|N_{r-1}(\hat e)|$ over all $(r-1)$-sets $\hat e$ of vertices in $H$.
This is also called the \emph{minimum co-degree} of $H$.

The problem of finding minimum degree thresholds for the existence of perfect matchings in $r$-uniform hypergraphs is a classical topic in extremal graph theory.
These questions are referred to as Dirac-type problems as Dirac~\cite{dirac1952some} showed that every graph on $n$ vertices with minimum degree at least $n/2$ contains a Hamiltonian cycle (and therefore a perfect matching if $n$ is even).
R{\"o}dl, Ruci{\'n}ski and Szemer{\'e}di~\cite{rodl2006dirac} started the study of Dirac-type problems for $3$-uniform hypergraphs.
K\"{u}hn and Osthus~\cite{kuhnmatchings} and R{\"o}dl, Ruci{\'n}ski and Szemer{\'e}di~\cite{rodl2006perfect} improved the minimum degree conditions for $r$-uniform hypergraphs until R{\"o}dl, Ruci{\'n}ski and Szemer{\'e}di~\cite{rodl2009perfect} settled the problem by proving a tight minimum degree condition for the existence of a perfect matching in $r$-uniform hypergraphs, stated in Theorem~\ref{thm:minimum-degree-condition-matching-uniform-hypergraph}.
See also~\cite{zhao2016recent} for a survey on Dirac-type problems in hypergraphs.
\begin{thm}[R{\"o}dl, Ruci{\'n}ski and Szemer{\'e}di~\cite{rodl2009perfect}]
\label{thm:minimum-degree-condition-matching-uniform-hypergraph}
    Let $r \geq 3$ be an integer and let $n$ be sufficiently large and divisible by $r$.
    If $H$ is an $r$-uniform hypergraph with $n$ vertices of minimum degree
    \[
        \delta_{r-1}(H) \geq \begin{cases}
          n/2 + 3 - r, & \text{if $r/2$ is even and $n/r$ is odd}\\
          n/2 + 5/2 - r, & \text{if $r$ is odd and $(n-1)/2$ is odd}\\
          n/2 + 3/2 - r, & \text{if $r$ is odd and $(n-1)/2$ is even}\\
          n/2 + 2 - r, & \text{otherwise}
        \end{cases}
    \]
    then $H$ has a perfect matching.
    Moreover, the minimum degree condition is tight.
\end{thm}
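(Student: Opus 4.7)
The plan is to follow the \emph{absorbing method} approach of R\"odl, Ruci\'nski and Szemer\'edi, which reduces the problem of finding a perfect matching to two substantially easier problems: finding an \emph{almost}-perfect matching, and finding a small matching that can ``absorb'' any remaining small set of uncovered vertices. The target bound $\delta_{r-1}(H) \geq n/2 - O(r)$ is roughly $n/2$, so a typical $(r-1)$-tuple of vertices can be completed in many ways, which is what makes absorption possible.

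First I would prove the \emph{absorbing lemma}: if $\delta_{r-1}(H) \geq (1/2 + o(1))n$, then there exists a matching $M_0$ in $H$ with $|M_0| \leq \eta n / r$ (for some small $\eta > 0$) such that for every set $S \subseteq V \setminus V(M_0)$ with $|S| \leq \eta^2 n$ and $r \mid |S|$, the induced subhypergraph $H[V(M_0) \cup S]$ contains a perfect matching. The standard way to establish this is to show, for each deficiency pattern $S$ of size $r$, that the number of ``absorbers'' (small vertex sets whose induced hypergraph has a perfect matching both with and without $S$) is $\Omega(n^{cr})$ for some constant $c$; then a probabilistic argument (pick each candidate absorber independently with low probability and then apply an alteration or Lov\'asz Local Lemma step) produces $M_0$.

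Next I would prove the \emph{almost-perfect matching lemma}: under the same degree hypothesis, $H$ has a matching covering all but $O(\log n)$ vertices. This is typically done via the weak regularity lemma for hypergraphs together with a fractional-to-integral matching conversion, or alternatively by the Frankl--R\"odl nibble / R\"odl--Ruci\'nski random greedy approach. Applying this lemma to $H \setminus V(M_0)$ (whose minimum codegree only drops by $O(\eta n)$) yields a matching $M_1$ leaving at most $\eta^2 n$ vertices uncovered. Combining $M_0$ and $M_1$ via the absorbing property produces a perfect matching.

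The main obstacle, and the reason the problem resisted earlier attacks that gave only $n/2 + o(n)$, is pinning down the exact additive constant: showing that the threshold is not $n/2 + \Omega(1)$ but exactly the case-dependent $n/2 + c(r,n)$ with $c(r,n) \in \{2-r, 3/2-r, 5/2-r, 3-r\}$ requires a stability argument. Concretely, one proves that if $H$ fails to have a perfect matching and has codegree only slightly below the threshold, then $H$ must look \emph{almost} like one of the extremal constructions; a perturbation argument then either forces the exact extremal structure or gains back enough degree to push the absorbing method through. The matching lower bound is obtained by taking an $r$-uniform hypergraph on $V = A \dot\cup B$ with $|A|$ chosen so that every matching must use an odd/even number of edges meeting $A$, but the divisibility of $|A|$ modulo $r$ makes this impossible; the exact parameters of $|A|$ depend on the parities of $r$ and $n/r$, giving the four cases in the statement.
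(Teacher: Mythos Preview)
This theorem is not proved in the paper; it is merely quoted from R\"odl, Ruci\'nski and Szemer\'edi~\cite{rodl2009perfect} as background, and the paper goes on to study the analogous question for $t$-shallow hitting edge sets with $t\geq 2$. So there is no ``paper's own proof'' to compare against.

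That said, your outline does track the strategy of the original R\"odl--Ruci\'nski--Szemer\'edi proof fairly well: absorbing lemma plus almost-perfect matching, combined via absorption, and then a stability analysis to nail the exact additive constant. Two minor corrections to your sketch: the almost-perfect matching step in~\cite{rodl2009perfect} does not use the hypergraph regularity lemma or the nibble; it is obtained by a direct greedy/extremal argument showing a matching of size $n/r - O(1)$ exists once the codegree is near $n/2$. And the absorbing lemma in~\cite{rodl2009perfect} is proved with a second-moment (Chebyshev) count of absorbers rather than the Lov\'asz Local Lemma. Your description of the extremal examples (a bipartition $V=A\,\dot\cup\,B$ with carefully chosen $|A|$ to create a parity obstruction) is correct and is exactly what produces the four-way case split in the statement.
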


Recall that perfect matchings are exactly $1$-shallow hitting edge sets.
Generalizing this, we seek to find as tight as possible minimum degree conditions for the existence of $t$-shallow hitting edge sets in $r$-uniform hypergraphs for fixed $t$ and $r$.
Let us start on the negative side by constructing an $r$-uniform hypergraph $H$ with large minimum degree $\delta_{r-1}(H)$ that has no $t$-shallow hitting edge set.
\begin{thm}
\label{theorem:02-09-uniform-lower-bound}
For all positive integers $r \geq 2$, $t \geq 2$ and $n$ there exists an $r$-uniform hypergraph $H=(V,E)$ with $|V| = n$ vertices and
\begin{equation*}
    \delta_{r-1}(H) \geq \frac{n}{(r-1)t+1} - 1
\end{equation*}
that has no $t$-shallow hitting edge set.
\end{thm}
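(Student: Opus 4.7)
The plan is to build $H$ by designating a small ``bottleneck'' set $A \subseteq V$ of size $a$ and declaring that every $r$-subset of $V$ meeting $A$ is an edge; formally, $V = A \dot\cup B$ and $E = \{e \subseteq V \colon |e| = r, \, e \cap A \neq \emptyset\}$. I would choose $a$ to be the largest integer strictly smaller than $n/((r-1)t+1)$, so that simultaneously
\[
    a \;\geq\; \frac{n}{(r-1)t+1} - 1 \qquad \text{and} \qquad a \cdot ((r-1)t+1) \;<\; n.
\]

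For the minimum co-degree, I would check the two cases for an $(r-1)$-set $\hat e$ of vertices. If $\hat e \cap A \neq \emptyset$, then $\hat e \cup \{v\}$ meets $A$ for every $v \in V \sm \hat e$, so $|N_{r-1}(\hat e)| = n - r + 1$. If $\hat e \cap A = \emptyset$, then $\hat e \cup \{v\} \in E$ iff $v \in A$, so $|N_{r-1}(\hat e)| = a$. Thus $\delta_{r-1}(H) = a \geq n/((r-1)t+1) - 1$ (assuming $n \geq a + r - 1$, which follows from the size choice; the few degenerate cases with $n < r$ are vacuous).

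The nonexistence of a $t$-shallow hitting edge set would follow from a short double-counting argument. Suppose $M \subseteq E$ is $t$-shallow and hitting. Since every $e \in E$ has $|e \cap A| \geq 1$, summing over edges in $M$ yields
\[
    |M| \;\leq\; \sum_{e \in M} |e \cap A| \;=\; \sum_{v \in A} \degree_M(v) \;\leq\; at,
\]
using $t$-shallowness on $A$. On the other hand, each $b \in B$ must be covered, so the hitting property gives
\[
    n - a \;=\; |B| \;\leq\; \sum_{b \in B} \degree_M(b) \;=\; \sum_{e \in M} |e \cap B| \;\leq\; (r-1)|M|.
\]
Combining the two inequalities yields $n - a \leq (r-1) a t$, i.e., $a \geq n/((r-1)t+1)$, contradicting the choice of $a$.

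There is no real obstacle here; the argument is entirely a carefully chosen extremal construction with a one-line double-counting certificate. The only point needing attention is the arithmetic choice of $a$ (to meet the lower bound on $\delta_{r-1}(H)$ demanded by the theorem while strictly violating the threshold appearing in the counting inequality) and handling the small-$n$ edge cases where $H$ has no edges, in which case the empty edge set trivially fails to be hitting and the theorem's required bound on $\delta_{r-1}(H)$ is either vacuous or nonpositive.
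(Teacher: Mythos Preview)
Your proposal is correct and follows essentially the same approach as the paper: the same bottleneck construction $E = \{e \in \binom{V}{r} : e \cap A \neq \emptyset\}$ with $|A| = \lceil n/k - 1 \rceil$, and the same double-count bounding $|M|$ from above via $t$-shallowness on $A$ and from below via the hitting condition on $B$. The only cosmetic difference is that you write the counts as sums $\sum_{e\in M}|e\cap A|$ and $\sum_{e\in M}|e\cap B|$, whereas the paper phrases the same inequalities in words.
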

\begin{proof}
Let $k=(r-1)t+1$ and $\delta = n/k-1$.
We define an $r$-uniform hypergraph $H=(V,E)$ with $V=A \dot\cup B$ where
\begin{equation*}
    |A| = \lceil \delta \rceil \quad \text{and} \quad |B| = n - \lceil \delta \rceil~.
\end{equation*}
Note that $|V|=|A| + |B| = n$ and $|A| < n/k$ and $|B| > (k-1) n/k$.
We define that $e \subseteq V$ is an edge in $H$ if and only if $|e|=r$ and $e$ has a vertex in $A$, i.e., $e \cap A \neq \emptyset$.
Clearly, $H$ has minimum degree $\delta_{r-1}(H) \geq \delta$.
Assume, for contradiction, that there exists a $t$-shallow hitting edge set $M$ of $H$.
Since every vertex in $A$ is covered at most $t$ times, we have
\begin{equation*}
    |M| \leq t |A| < \frac{t n}{k}~.
\end{equation*}
On the other hand,
\begin{equation*}
    |M| \geq \frac{|B|}{r-1} > \frac{(k-1) n}{(r-1) k} = \frac{t n}{k}~,
\end{equation*}
since every vertex in $B$ is covered at least once and at most $r-1$ vertices of $B$ are covered by the same edge in $M$.
Thus, $tn/k < |M| < tn/k$ is the desired contradiction.
\end{proof}

On the positive side, if the minimum degree of $H$ is just one larger than the bound in Theorem~\ref{theorem:02-09-uniform-lower-bound}, then $t$-shallow hitting edge sets always exist.
In other words, the following theorem is best-possible.

\begin{thm}
\label{theorem:02-09-large-degree-uniform-hypergraph}
Let $t \geq 2$ be an integer.
If $H=(V,E)$ is an $r$-uniform hypergraph with $|V| = n$ vertices which satisfies
\begin{equation*}
    \delta_{r-1}(H) \geq \frac{n}{(r-1)t+1}~,
\end{equation*}
then $H$ has a $t$-shallow hitting edge set.
\end{thm}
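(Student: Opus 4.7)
I will prove this by a minimality/swap argument calibrated to the tight construction of \autoref{theorem:02-09-uniform-lower-bound}. Take $M \subseteq E$ to be a $t$-shallow edge set that covers the maximum possible number of vertices and, subject to that, has minimum size; I claim $M$ is hitting. Suppose for contradiction that $v \in V$ is uncovered, and set $B := \{u \in V : \degree_M(u) = t\}$. Two observations are immediate. \emph{First}, every edge $e \in E$ with $v \in e$ satisfies $e \cap B \neq \emptyset$: otherwise $M \cup \{e\}$ would be $t$-shallow with strictly larger coverage, contradicting maximality. \emph{Second}, every edge $f \in M$ has a ``private'' vertex $p_f \in f$ with $\degree_M(p_f) = 1$: otherwise $M \setminus \{f\}$ would be $t$-shallow with the same coverage but smaller size.

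The first observation, combined with the minimum co-degree hypothesis, forces a large $B$. Choose an $(r-1)$-set $\hat{e} = \{v\} \cup S$ with $S \subseteq V \setminus (B \cup \{v\})$ and $|S| = r-2$ (possible once $|V \setminus B \setminus \{v\}| \geq r-2$; the opposite case already makes $|B|$ larger than $n/((r-1)t+1)$ for $n$ large). Every extension $\hat{e} \cup \{w\} \in E$ is an edge through $v$ which must hit $B$, and since $\hat{e} \cap B = \emptyset$ this forces $w \in B$. Hence $N_{r-1}(\hat{e}) \subseteq B$, so
\[
    |B| \;\geq\; |N_{r-1}(\hat{e})| \;\geq\; \frac{n}{(r-1)t+1}.
\]
Moreover, picking any $b \in N_{r-1}(\hat{e})$ produces an edge $e := \hat{e} \cup \{b\} \ni v$ whose unique $B$-vertex is $b$.

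The goal is now to ``free'' $b$: for one of the $t$ edges $f \in M$ through $b$, swap $f$ for an edge $f' := (f \setminus \{b\}) \cup \{w'\} \in E$, whose existence is guaranteed by applying the hypothesis to the $(r-1)$-set $f \setminus \{b\}$. Setting
\[
    M' \;:=\; (M \setminus \{f\}) \cup \{f',\, e\},
\]
the private vertex $p_f \in f \setminus \{b\} \subseteq f'$ stays covered, and $v$ becomes newly covered by $e$, so $M'$ improves on $M$ in coverage. A bookkeeping of the degree changes shows $M'$ is $t$-shallow precisely when $w' \notin B$ and, if additionally $w' \in e$, then $\degree_M(w') \leq t-2$.

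The main obstacle is confirming that such a $w'$ exists. Naively, subtracting $B \cup e$ from the neighborhood of $f \setminus \{b\}$ can leave nothing when $|B|$ is large. I would attack this by combining (i) the freedom in the choices of $S$, $b$, and $f \ni b$, averaging the count
\[
    \#\{e \in E : v \in e,\ |e \cap B|=1\} \;\geq\; \binom{n-1-|B|}{r-2}\cdot\frac{n}{(r-1)t+1}
\]
from the $\hat{e}$-argument against the total of $t$ edges of $M$ through each $b$; (ii) the structural bounds $r|M| \geq (t-1)|B| + (n-1)$ and $n_1 \geq |M|$ coming from the two observations together with $t$-shallowness; and (iii) the established $|B| \geq n/((r-1)t+1)$. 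By pigeonhole these should produce either a valid $w'$ (giving the contradictory $M'$) or a direct numerical contradiction with the minimum-degree bound, completing the proof.
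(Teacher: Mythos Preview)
Your proposal is a plan rather than a proof, and the plan has a genuine gap at exactly the point you flag as the ``main obstacle.'' To make the swap $f \mapsto f'$ work you need some $w' \in N_{r-1}(f \setminus \{b\}) \setminus B$, but your argument only gives a \emph{lower} bound $|B| \geq n/((r-1)t+1)$, never an upper bound. In fact, with your extremal choice of $M$ (maximum coverage, then minimum size) the set $B$ can be much larger than $n/((r-1)t+1)$: the ``private vertex'' observation yields only $n_1 \geq |M|$, which together with $r|M| \geq t|B|$ gives roughly $|B| \leq rn/(r+t)$, far too weak to force $N_{r-1}(f\setminus\{b\}) \not\subseteq B$. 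The averaging and pigeonhole heuristics you list in (i)--(iii) do not close this; they all point in the same (wrong) direction of making $|B|$ large rather than small.

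The paper's proof hinges on a different extremal choice of $M$ that reverses this situation entirely. Instead of minimising $|M|$ among maximally covering $t$-shallow sets, it minimises the number of uncovered vertices among $t$-shallow sets with the structural property that \emph{every edge of $M$ has at most one vertex of $M$-degree $\geq 2$}. That condition is strictly stronger than ``every edge has a private vertex'': it forces each vertex of $M$-degree $t$ to be surrounded in $M$ by $(r-1)t$ \emph{distinct} degree-$1$ vertices, giving $n_1 \geq (r-1)t\,n_t$ and hence $n_t < \lceil n/k\rceil$ whenever some vertex is uncovered. With $|B|=n_t$ \emph{below} the degree threshold, any $(r-1)$-set of uncovered vertices has a neighbour outside $B$, and a short case analysis (swapping at most one edge to preserve the structural property) yields the contradiction. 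The key idea you are missing is this stronger normalisation of $M$, which turns the problem from ``$B$ is large so swaps are hard'' into ``$B$ is small so extensions are easy.''
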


\begin{proof}
    Let $k=(r-1)t+1$.
    First of all, observe that $n \geq r$ as $\delta_{r-1}(H) = 0$ for $1 \leq n < r$, contradicting the assumption $\delta_{r-1}(H) \geq n/k > 0$.
    Let $M \subseteq E$ be a $t$-shallow edge set of $H$.
    For $i=0,1,\dots, t$, let $n_i(M)$ be the number of vertices $v \in V$ with $\degree_M(v)=i$.
    We define $M \subseteq E$ to be a $t$-shallow edge set that minimizes $n_0(M)$ on condition that each edge $e \in M$ contains at most one vertex $v \in e$ with $\degree_M(v) \geq 2$, i.e., $|\{v \in e \mid \degree_M(v) \geq 2\}| \leq 1$ for all $e \in M$.
    Moreover, we set $n_i=n_i(M)$ for all $i=0,1,\dots,t$ and $M$ as defined.

    \begin{claim}
        \label{claim:upper-bound-nt}
        If $n_0 > 0$ then $n_t < \left\lceil n/k \right\rceil$.
    \end{claim}
    \begin{claimproof}
        Assume that $n_t \geq \lceil n/k \rceil$.
        Each vertex of degree $t$ in $M$ has $(r-1)t$ neighbors of degree $1$ in $M$, all of them distinct.
        Thus, $n_1 \geq (r-1) t n_t$ and 
        \begin{equation*}
            n \geq n_0 + n_1 + n_t > ((r-1)t+1) n_t \geq k \lceil n/k \rceil \geq n~,
        \end{equation*}
        a contradiction.
    \end{claimproof}

    \begin{claim}
        \label{claim:upper-bound_n0}
        $n_0 \leq r-2$.
    \end{claim}
    \begin{claimproof}
        Assume that $n_0 \geq r-1$.
        Let $\hat e$ be an $(r-1)$-set of vertices that is not covered by $M$.
        By Claim~\ref{claim:upper-bound-nt} it holds that $n_t < \lceil n/k \rceil$.
        Since $\delta_{r-1}(H) \geq \lceil n/k \rceil$, there exists a vertex $v \in N_{r-1}(\hat e)$ with $\degree_M(v) < t$.
        We define $e = \hat e \cup \{v\}$ and distinguish four cases.

        First, assume that $v$ is not covered by $M$.
        Then, $M'=M \cup \{e\}$ is a $t$-shallow edge set with $n_0(M') < n_0(M)$ and each edge $e' \in M'$ still has at most one vertex $v' \in e'$ of degree at least two, a contradiction.

        Secondly, assume that $\degree_M(v) = 1$ and \emph{the} edge $e' \in M$ incident to $v$ has no vertex of degree at least two in $M$.
        As in the first case, $M' = M \cup \{e\}$ leads to the desired contradiction.

        Thirdly, assume that $\degree_M(v) = 1$ and \emph{the} edge $e' \in M$ incident to $v$ has a vertex $v'$ of degree at least two in $M$.
        Then, define $M' = (M \cup \{e\}) - \{e'\}$.
        Then, $V(M') = (V(M) - V(e')) \cup V(e) \cup \{v'\}$ since $v'$ has degree at least one in $M'$.
        Thus, $|V(M')|=|V(M)|+1$ and $n_0(M') < n_0(M)$.
        As we deleted the edge $e'$, every edge in $M'$ still has at most one vertex of degree at least two in $M'$.
        This is the desired contradiction.

        It remains the case that $\degree_M(v) \geq 2$.
        We define $M' = M \cup \{e\}$.
        It clearly holds that $n_0(M') < n_0(M)$ and $M'$ is $t$-shallow.
        The edge $e$ has exactly one vertex of degree at least two in $M'$.
        Moreover, for each edge $e' \in M$ that is incident to $e$ it holds that $e \cap e' = \{v\}$.
        Therefore, each edge $e' \in M'$ has at most one vertex of degree at least two.
    \end{claimproof}

    We are now able to prove the theorem.
    If $n_0 = 0$, we are done.
    Hence, assume that $n_0 > 0$.
    By Claim~\ref{claim:upper-bound-nt} we have $n_t < \lceil n/k \rceil$ and by Claim~\ref{claim:upper-bound_n0} we have $n_0 \leq r-2$.
    We build an $(r-1)$-set $\hat e$ of vertices of degree less than $t$ that contains all vertices that are not covered by $M$.
    This set $\hat e$ exists since $n \geq r$ and therefore $n-n_t > (k-1)n/k = (r-1)tn/k \geq r-1$ for $t \geq 1$.
    Since $n_t < \lceil n/k \rceil$ and $\delta_{r-1}(H) \geq \lceil n/k \rceil$, there is a vertex $v \in N_{r-1}(\hat e)$ with $\degree_M(v) < t$.
    We define $e = \hat e \cup \{v\}$ and $M' = M \cup \{e\}$.
    Then, $M'$ is a $t$-shallow hitting edge set.
\end{proof}

\subsection{Uniform partite hypergraphs}
\label{subsec:uniform-partite-hypergraphs}

Now, let us consider $r$-uniform $r$-partite hypergraphs.
Recall that for $r=2$ these are exactly the bipartite graphs.
It is well-known that every bipartite graph with parts of size $n$ has a perfect matching if $\degree(v) \geq n/2$ for all vertices $v$.
The definition of the minimum degree of a bipartite graph is extendable to $r$-uniform $r$-partite hypergraphs in the following way, see for example~\cite{kuhnmatchings} and~\cite{aharoniperfectmatchings}.
Let $H=(V_1 \dot\cup \cdots \dot\cup V_r, E)$ be an $r$-uniform $r$-partite hypergraph.
We say that a set $\hat e \subseteq V$ of vertices of $H$ is \emph{legal} if $|\hat e \cap V_i| \leq 1$ for all parts $V_i$ of $H$.
If $\hat e \subseteq V$ is a set of vertices of $H$ with $|\hat e| = r-1$, the \emph{neighborhood} $N_{r-1}(\hat e)$ of $\hat e$ is the set of all vertices $v$ such that $\hat e \cup \{v\}$ is an edge in $H$.
Given an $r$-uniform $r$-partite hypergraph $H$, we define the \emph{minimum degree} $\delta'_{r-1}(H)$ to be the minimum of $|N_{r-1}(\hat e)|$ over all legal sets $\hat e$ of vertices in $H$ with $|\hat e| = r-1$.
The minimum degree $\delta'_{r-1}(H)$ is also called \emph{minimum co-degree}, but observe the important difference between $\delta'_{r-1}(H)$ for $r$-uniform $r$-partite hypergraphs $H$ considered here and $\delta_{r-1}(H)$ for $r$-uniform hypergraphs $H$ from Section~\ref{subsec:uniform-hypergraphs}.

Kühn and Osthus~\cite{kuhnmatchings} proved a sufficient bound on $\delta'_{r-1}(H)$ for the existence of perfect matchings in $r$-uniform $r$-partite hypergraphs.
This bound was improved in~\cite{aharoniperfectmatchings} as stated below.
\begin{thm}[Aharoni, Georgakopoulos and Spr\"{u}ssel~\cite{aharoniperfectmatchings}]
    \label{theorem:aharoni-matchings-upper-bound}
    Let $r \geq 2$ and let $H=(V = V_1 \dot\cup \cdots \dot\cup V_r, E)$ be an $r$-uniform $r$-partite hypergraph with parts of size $n$ such that for every legal $(r-1)$-set $\hat e$ contained in $V \setminus V_1$ we have $|N_{r-1}(\hat e)| > n/2$ and for every legal $(r-1)$-set $\hat f$ contained in $V \setminus V_r$ we have $|N_{r-1}(\hat f)| \geq n/2$.
    Then there exists a perfect matching in $H$.
\end{thm}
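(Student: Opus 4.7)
The plan is to prove the theorem by induction on $n$, the common size of the parts. The base case $n=1$ is immediate: each part consists of a single vertex, and the hypothesis forces the unique transversal $\{v_1,\dots,v_r\}$ to form an edge in $H$.

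For the inductive step, I would try to select a single edge $e = \{u_1,\dots,u_r\}$ with $u_i \in V_i$ such that the sub-hypergraph $H' = H - V(e)$, whose parts have size $n-1$, still satisfies the hypothesis with $n$ replaced by $n-1$. The inductive hypothesis then gives a perfect matching $M'$ in $H'$, and $M' \cup \{e\}$ is the desired perfect matching of $H$.

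On the $V_1$-side, the strict inequality $|N_{r-1}(\hat e)| > n/2$ for legal $\hat e \subseteq V \setminus V_1$ provides slack: removing $u_1$ decreases the neighborhood of any such $\hat e$ by at most one, and a short integer-parity check shows that the resulting integer count still satisfies $> (n-1)/2$ for both parities of $n$. So conditions on the $V_1$-side transfer essentially automatically, for an arbitrary choice of $u_1$.

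The main obstacle lies on the $V_r$-side. Since $|N_{r-1}(\hat f)| \geq n/2$ is only weak, there may exist tight legal $(r-1)$-sets $\hat f \subseteq V \setminus V_r$ with $|N_{r-1}(\hat f)| = \lceil n/2 \rceil$; when $n$ is even and $u_r \in N_{r-1}(\hat f)$, the remaining neighborhood in $H'$ has only $n/2 - 1 < \lceil (n-1)/2 \rceil$ vertices, so the inductive hypothesis fails. The hard part is therefore to choose $u_r$ and then $u_1,\dots,u_{r-1}$ so that $u_r$ lies outside the neighborhood of every tight $\hat f$, or so that the tight sets themselves are ``absorbed'' by the choice of the other $u_i$'s. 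I would try to handle this by an auxiliary Hall-type argument: form a bipartite graph between $V_r$ and the family of tight $(r-1)$-sets (where $u_r$ is joined to $\hat f$ when $u_r \in N_{r-1}(\hat f)$), and use the fact that each tight $\hat f$ has exactly $n/2$ neighbors in $V_r$ together with the strict $V_1$-side condition to deduce that some vertex $u_r \in V_r$ is incident to very few tight sets---ideally none, or few enough that a further local adjustment (swapping within the inductively produced matching $M'$) repairs any remaining deficiency. Making this step rigorous, and in particular controlling how tight sets interact after the removal, is where I would expect the real work of the proof to lie.
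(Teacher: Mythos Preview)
The paper does not contain a proof of this theorem at all: it is quoted from the cited reference of Aharoni, Georgakopoulos and Spr\"ussel and used as a black box, so there is no ``paper's own proof'' to compare your proposal against.

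As for the proposal itself, it is not a proof but an outline, and you say so yourself in the last sentence. The induction-on-$n$ scheme is natural, and you correctly isolate the obstruction: when $n$ is even, tight legal $(r-1)$-sets $\hat f \subseteq V\setminus V_r$ with $|N_{r-1}(\hat f)| = n/2$ may exist, and removing any $u_r$ in their neighbourhood drops the count below $\lceil (n-1)/2\rceil$. However, the resolution you sketch---forming a bipartite graph between $V_r$ and the family of tight sets and invoking a Hall-type argument to find $u_r$ avoiding all of them, or few enough to be repaired---is not supported by anything concrete. There is no reason given why such a $u_r$ should exist: the tight sets can, in principle, have neighbourhoods that together cover all of $V_r$, and the strict condition on the $V_1$-side gives you information about neighbourhoods into $V_1$, not about how tight $\hat f$'s overlap in $V_r$. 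The ``local adjustment'' idea is equally vague. So the real content of the theorem is precisely the step you have deferred, and your plan does not indicate how to carry it out.

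For what it is worth, the original proof in the cited reference does not proceed by this kind of induction; it works directly with a maximum matching and a careful counting/swapping argument, which is rather different in spirit from what you propose.
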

The upper bound in Theorem~\ref{theorem:aharoni-matchings-upper-bound} is tight up to the condition $|N_{r-1}(\hat e)| > n/2$ for every legal $(r-1)$-set $\hat e$ contained in $V \setminus V_1$.
It is not known whether this condition can be replaced by $|N_{r-1}(\hat e)| \geq n/2$ for every legal $(r-1)$-set $\hat e$.
On the other hand, there exist $r$-uniform $r$-partite hypergraphs $H$ with $\delta'_{r-1}(H) \geq n/2 - 1$ that have no perfect matching, due to a construction in~\cite{kuhnmatchings}, which shows that the minimum degree condition in Theorem~\ref{theorem:aharoni-matchings-upper-bound} is essentially tight.

\smallskip

In this section, we seek to extend these results to $t$-shallow hitting edge sets in $r$-uniform $r$-partite hypergraphs $H$ with parts of equal sizes and large minimum degree $\delta'_{r-1}(H)$.
For a start, we show a sufficient condition for the existence of $t$-shallow hitting edge sets in bipartite graphs (that is, $2$-uniform $2$-partite hypergraphs) of large minimum degree and show that our required lower bound on $\delta_1(H)$ is tight.
Afterwards, we extend this to $r$-uniform $r$-partite hypergraphs.

\begin{thm}[Berge and Las Vergnas~\cite{berge1978existence}]
    \label{thm:shallow-hitting-edge-sets-bipartite-graphs}
    Let $G=(A \dot\cup B, E)$ be a bipartite graph and $t$ be a positive integer.
    Then, $G$ has a $t$-shallow hitting edge set if and only if $|X| \leq t |N(X)|$ for all sets $X \subseteq A$ and $X \subseteq B$.
\end{thm}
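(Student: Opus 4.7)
My plan is to prove the two implications separately.

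\emph{Necessity.} Let $M$ be a $t$-shallow hitting edge set of $G$ and fix $X \subseteq A$. Let $M_X \subseteq M$ be the edges with at least one endpoint in $X$. Since $G$ is bipartite, each edge in $M_X$ has a unique $A$-endpoint in $X$, so $|M_X| = \sum_{x \in X} \degree_M(x) \geq |X|$. Since moreover every edge of $M_X$ has its $B$-endpoint in $N(X)$ and $M$ is $t$-shallow, $|M_X| \leq \sum_{b \in N(X)} \degree_M(b) \leq t|N(X)|$. Combining yields $|X| \leq t|N(X)|$, and the case $X \subseteq B$ is symmetric.

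\emph{Sufficiency.} The plan is an alternating-path augmentation argument. Start with a $t$-shallow edge set $M \subseteq E$ minimizing the number of uncovered vertices; such $M$ exists since $M = \emptyset$ is $t$-shallow. Assume for contradiction that some vertex, say $v \in A$, is uncovered. I run a BFS rooted at $v$, building sets $X^* \subseteq A$ and $Y^* \subseteq B$: initially $X^* = \{v\}$ and $Y^* = \emptyset$; then iteratively add to $Y^*$ every $G$-neighbor of $X^*$, and add to $X^*$ every $A$-vertex $M$-matched to some $b \in Y^*$, until both sets stabilize.

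If at some stage the BFS reaches a ``flexible'' vertex---either $b \in Y^*$ with $\degree_M(b) < t$, or $a \in X^* \setminus \{v\}$ with $\degree_M(a) \geq 2$---then toggling the $M$-status of all edges along the alternating path in the BFS tree from $v$ to that vertex yields a new $t$-shallow edge set that covers $v$ while preserving coverage elsewhere, contradicting minimality. The alternation is: non-$M$ edges in odd steps ($A$ to $B$), $M$-edges in even steps ($B$ to $A$), so each intermediate vertex gains and loses exactly one $M$-edge. Otherwise, at termination every $b \in Y^*$ has $\degree_M(b) = t$ with all $t$ $M$-neighbors in $X^*$, every $a \in X^* \setminus \{v\}$ has $\degree_M(a) = 1$ with its $M$-neighbor in $Y^*$, and $Y^* = N_G(X^*)$ by stability. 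Counting $M$-edges between $X^*$ and $Y^*$ two ways gives $t|Y^*| = |X^*| - 1$, whence $|X^*| > t|Y^*| = t|N_G(X^*)|$, contradicting the hypothesis. The case $v \in B$ is symmetric.

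The main technical obstacle is verifying the path augmentation in detail: the endpoint effects (either filling spare capacity at $b$ when $\degree_M(b) < t$, or releasing one unit at $a$ when $\degree_M(a) \geq 2$) together with the degree preservation at internal vertices must all remain within the allowed range, which requires carefully tracking the $+1$ and $-1$ contributions along the alternating path and ensuring that no intermediate vertex is pushed outside $[1,t]$.
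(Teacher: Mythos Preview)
The paper does not supply a proof of this theorem; it is quoted from Berge and Las Vergnas and used as a black box (to derive Theorem~\ref{thm:02-00-minimum-vertex-degree}), so there is nothing in the paper to compare your argument against.

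That said, your proof is correct. The necessity direction is a clean double count. For sufficiency, the alternating-path augmentation works exactly as you describe, \emph{provided} the BFS is read as halting the moment a flexible vertex is reached (which your phrase ``at some stage'' suggests). This halting convention is what guarantees that every $a \in X^\ast \setminus \{v\}$ you subsequently extend from has $\deg_M(a) = 1$, so that its unique $M$-edge is the one that brought it into $X^\ast$ and every outgoing BFS-tree edge from $a$ is genuinely a non-$M$ edge. Without it, an odd edge of the BFS tree could already lie in $M$, and then ``toggling'' along the path would remove two $M$-edges at the same intermediate $A$-vertex, breaking the degree bookkeeping. You might make this point explicit in the write-up; once it is, the termination case and the double count $t|Y^\ast| = |X^\ast| - 1$ go through exactly as you state.
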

\begin{thm}
\label{thm:02-00-minimum-vertex-degree}
Let $G=(A \dot\cup B, E)$ be a bipartite graph with $|A| \leq |B| \leq t |A|$.
Let $\delta_A$ respectively $\delta_B$ be the minimum degree of the vertices in $A$ respectively $B$.
If $t \delta_A + \delta_B \geq |A|$ and $\delta_A + t \delta_B \geq |B|$ then there exists a $t$-shallow hitting edge set.
\end{thm}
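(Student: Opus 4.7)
The plan is to reduce the statement to the Hall-type characterization of $t$-shallow hitting edge sets in bipartite graphs given by Theorem~\ref{thm:shallow-hitting-edge-sets-bipartite-graphs} of Berge and Las Vergnas. By that result, a $t$-shallow hitting edge set exists iff $|X| \leq t\,|N(X)|$ for every $X \subseteq A$ and every $X \subseteq B$, so the whole task reduces to verifying this single inequality in both directions using the degree and size hypotheses.

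For the case $X \subseteq A$, I would assume toward a contradiction that $|X| > t\,|N(X)|$ and deduce that the degree hypotheses are violated. Since every vertex of $X$ has at least $\delta_A$ neighbors, all of which lie in $N(X)$, we first observe $|N(X)| \geq \delta_A$ (the case $X = \emptyset$ being trivial). Combined with $t\delta_A + \delta_B \geq |A|$, this gives
\[
    |X| \;>\; t\,|N(X)| \;\geq\; t\delta_A \;\geq\; |A| - \delta_B,
\]
so $|A \setminus X| < \delta_B$. Now I would split on whether $N(X) = B$. If $N(X) = B$, then $|X| \leq |A| \leq |B| = |N(X)| \leq t\,|N(X)|$ using $|A| \leq |B|$ and $t \geq 1$, a direct contradiction. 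Otherwise, there is some $y \in B \setminus N(X)$, whose at least $\delta_B$ neighbors must all lie in $A \setminus X$, contradicting $|A \setminus X| < \delta_B$.

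The case $X \subseteq B$ is completely symmetric: one uses the complementary hypothesis $\delta_A + t\delta_B \geq |B|$ to obtain $|B \setminus X| < \delta_A$, and then splits on whether $N(X) = A$, invoking the remaining size inequality $|B| \leq t|A|$ to settle the ``full neighborhood'' subcase. There is no serious obstacle here; the delicate bookkeeping is matching each size constraint ($|A| \leq |B|$ versus $|B| \leq t|A|$) with the correct direction of the Hall condition, and observing that the two hypotheses $t\delta_A + \delta_B \geq |A|$ and $\delta_A + t\delta_B \geq |B|$ are exactly the asymmetric degree bounds needed in the two cases.
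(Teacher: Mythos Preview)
Your proposal is correct and follows essentially the same approach as the paper: both reduce to the Berge--Las Vergnas characterization (Theorem~\ref{thm:shallow-hitting-edge-sets-bipartite-graphs}) and verify the Hall-type condition by observing that $|N(X)| \geq \delta_A$ forces $|A \setminus X| < \delta_B$ whenever the condition could fail, which in turn forces $N(X) = B$. The only cosmetic difference is that the paper organizes the argument as a direct case split on whether $|X| \leq t\delta_A$, while you frame it as a contradiction and split on whether $N(X) = B$; the content is identical.
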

\begin{proof}
By Theorem~\ref{thm:shallow-hitting-edge-sets-bipartite-graphs}, we have to show that $|X| \leq t |N(X)|$ holds for all sets $X \subseteq A$ and all sets $X \subseteq B$.

We show that every set $X \subseteq A$ satisfies $|X| \leq t |N(X)|$.
First, assume that $1 \leq |X| \leq t \delta_A$.
Since every vertex in $X$ has at least $\delta_A$ neighbors, we have $|N(X)| \geq \delta_A$ and thus $|X| \leq t |N(X)|$.
Now assume that $|X| > t \delta_A$.
It follows that $|A \setminus X| < |A| - t \delta_A \leq \delta_B$.
Since every vertex in $B$ has at least $\delta_B$ neighbors in $A$, it must have a neighbor in $X$.
Thus, $N(X) = B$ and $t|N(X)| = t |B| \geq t |A| \geq |X|$.

Now, we show that every set $X \subseteq B$ satisfies $|X| \leq t |N(X)|$.
First, assume that $1 \leq |X| \leq t \delta_B$.
Since every vertex in $X$ has at least $\delta_B$ neighbors, we have $|N(X)| \geq \delta_B$ and thus $|X| \leq t |N(X)|$.
Now assume that $|X| > t \delta_B$.
It follows that $|B \setminus X| < |B| - t \delta_B \leq \delta_A$.
Since every vertex in $A$ has at least $\delta_A$ neighbors in $A$, it must have a neighbor in $X$.
Thus, $N(X) = A$ and $t|N(X)| = t |A| \geq |B| \geq |X|$.
\end{proof}

As a special case of Theorem~\ref{thm:02-00-minimum-vertex-degree}, assume that $|A|=|B|=n$.
It follows that if $G$ has minimum degree $\delta(G) \geq n/(t+1)$ then $G$ has a $t$-shallow hitting edge set.
Indeed,
\begin{equation*}
    t \delta_A + \delta_B \geq \frac{t n}{t+1} + \frac{n}{t+1} = n = |A|
\end{equation*}
and
\begin{equation*}
    \delta_A + t \delta_B \geq \frac{n}{t+1} + \frac{t n}{t+1} = n = |B|~.
\end{equation*}
To see that the condition $\delta(G) \geq n/(t+1)$ is tight, let $G'=(A \dot\cup B, E)$ be a bipartite graph with parts of size $|A|=|B|=n$ such that $t+1$ divides $n-1$.
Let $A = A_1 \dot\cup A_2$ and $B = B_1 \dot\cup B_2$ where $|A_1| = |B_2| = (n-1)/(t+1)$ and $|A_2| = |B_1| = (n t + 1)/(t+1)$ and note that that $|A|=|B|=n$.
The edges of $G'$ are exactly the pairs $\{a_1,b_1\}$ and $\{a_2, b_2\}$ with $a_1 \in A_1, a_2 \in A_2, b_1 \in B_1, b_2 \in B_2$.
The graph $G'$ has minimum degree $\delta(G') = (n-1)/(t+1) \geq  n/(t+1) - 1$ but does not contain a $t$-shallow hitting edge set as
\begin{equation*}
    t|N(B_1)| = t |A_1| = t \cdot \frac{n-1}{t+1} = \frac{t n + 1}{t + 1} - 1 = |B_1| - 1 < |B_1|~.
\end{equation*}

Now, let us turn to $t$-shallow hitting edge sets in $r$-uniform $r$-partite hypergraphs for $r > 2$.
By extending a construction from~\cite{kuhnmatchings}, we obtain the following construction of a hypergraph $H$ with large minimum degree $\delta'_{r-1}(H)$ but without a $t$-shallow hitting edge set.
\begin{thm}
\label{theorem:02-09-lower-bound}
For all positive integers $r \geq 2$, $t \geq 2$ and $n$ there exists an $r$-uniform $r$-partite hypergraph $H$ with parts of size $n$ and
\begin{equation*}
    \delta'_{r-1}(H) \geq \frac{n}{(r-1)t+1} - 1
\end{equation*}
that has no $t$-shallow hitting edge set.
\end{thm}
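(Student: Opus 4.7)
The plan is to mirror the construction of \autoref{theorem:02-09-uniform-lower-bound} in the partite setting, replacing the single ``small set'' $A$ by one small set $A_i$ inside each part $V_i$. Specifically, set $k = (r-1)t+1$ and $a = \lceil n/k \rceil - 1$, so that $a < n/k$. In each part $V_i$ I would fix a subset $A_i \subseteq V_i$ with $|A_i| = a$, let $B_i = V_i \setminus A_i$ (so $|B_i| = n - a$), and declare the edge set $E$ to consist of every legal $r$-set that meets $A := A_1 \cup \cdots \cup A_r$, i.e.\ every $e = \{v_1,\dots,v_r\}$ with $v_i \in V_i$ and $e \cap A \neq \emptyset$.

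The first step is to verify the minimum-degree condition. Let $\hat e$ be a legal $(r-1)$-set missing part $V_j$. If $\hat e$ already contains a vertex of some $A_i$ with $i \neq j$, then every $v \in V_j$ completes $\hat e$ to an edge, giving $|N_{r-1}(\hat e)| = n$. Otherwise $\hat e \subseteq B := B_1 \cup \cdots \cup B_r$, and then $\hat e \cup \{v\}$ is an edge exactly when $v \in A_j$, giving $|N_{r-1}(\hat e)| = a$. Thus $\delta'_{r-1}(H) = a \geq n/k - 1$.

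The second step is a double counting that rules out any $t$-shallow hitting edge set $M$. Since every edge of $H$ has at least one vertex in $A$ and each vertex of $A$ is covered at most $t$ times by $M$, we get
\[
    |M| \le t\,|A| = t r a < \frac{trn}{k}.
\]
On the other hand, each edge of $H$ has at least one vertex in $A$, so it contains at most $r-1$ vertices of $B$; combined with the hitting requirement for the $r(n-a)$ vertices of $B$ this yields
\[
    |M| \ge \frac{|B|}{r-1} = \frac{r(n-a)}{r-1} > \frac{r(n - n/k)}{r-1} = \frac{r n (k-1)}{(r-1)k} = \frac{trn}{k},
\]
using $k-1 = (r-1)t$ in the last equality. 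These two inequalities contradict one another, so no $t$-shallow hitting edge set exists.

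The only subtlety, and the place that deserves a brief sanity check in the write-up, is the degenerate regime where $a = 0$ (that is, $n \le k$): there the hypergraph has no edges at all, so there is trivially no hitting edge set, while $\delta'_{r-1}(H) = 0 \ge n/k - 1$ still holds, so the claim is vacuously verified. I do not anticipate any real obstacle beyond confirming that the minimum-degree count is correct in both the ``$\hat e$ meets $A$'' and ``$\hat e \subseteq B$'' cases; everything else is bookkeeping that parallels the non-partite construction in \autoref{theorem:02-09-uniform-lower-bound}.
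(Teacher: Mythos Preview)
Your proposal is correct and follows essentially the same approach as the paper's own proof: the paper likewise fixes a small subset $V'_i$ of size $\lceil n/k - 1\rceil$ inside each part, takes as edges all legal $r$-sets meeting $\bigcup_i V'_i$, and then derives the same pair of contradictory inequalities for $|M|$ by the identical double count. Your extra remark on the degenerate case $a=0$ is a nice sanity check that the paper leaves implicit.
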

\begin{proof}
Let $\delta'=n/((r-1)t+1)-1$.
We define the $r$-partite $r$-uniform hypergraph $H=(V_1\dot\cup V_2 \dot\cup \cdots \dot\cup V_r, E)$ with $|V_i|=n$ for all $i=1,2,\dots,r$.
Let $V'_i$ be a subset of $V_i$ of size $|V'_i| = \lceil \delta' \rceil$, for $i=1,2,\dots,r$.
Note that $|V'_1 \cup V'_2 \cup\cdots\cup V'_r| < r (\delta'+1)$.
We define that $e=\{v_1,v_2,\dots,v_r\}$ is an edge in $H$ if and only if $v_1 \in V_1, v_2 \in V_2, \dots, v_r \in V_r$ and there is at least one $v_i$ with $v_i \in V'_i$.
Clearly, $H$ has minimum degree $\delta'_{r-1}(H) \geq \delta'$.
Assume, for contradiction, that there exists a $t$-shallow hitting edge set $M$ of $H$.
Then,
\begin{equation*}
    |M| \leq t|V'_1 \cup V'_2 \cup \cdots \cup V'_r|
    < t r (\delta' + 1)
\end{equation*}
since every vertex in $V'_1 \cup V'_2 \cup \cdots \cup V'_r$ is covered at most $t$ times.
On the other hand,
\begin{equation*}
\begin{split}
    |M| &\geq \frac{1}{r-1} \left|(V_1 \setminus V'_1) \cup (V_2 \setminus V'_2) \cup \cdots \cup (V_r \setminus V'_r)\right|
    > \frac{r n - r (\delta' + 1)}{r-1} \\
    &= \frac{r n t}{(r - 1) t + 1} = t r (\delta' + 1)
\end{split}
\end{equation*}
since every vertex in $V_i \setminus V'_i$ is covered at least once and at most $r-1$ vertices of $(V_1 \setminus V'_1) \cup \cdots \cup (V_r \setminus V'_r)$ are covered by the same edge in $M$.
Thus, $t r (\delta' + 1) < |M| < t r (\delta' + 1)$ is the desired contradiction.
\end{proof}

To obtain a condition on $\delta'_{r-1}(H)$ that is sufficient for the existence of $t$-shallow hitting edge sets, we first state a theorem from~\cite{kuhnmatchings}.
It says that if we relax the condition $\delta'_{r-1}(H) > n/2$ to $\delta'_{r-1}(H) \geq n/r$, then there exists an almost perfect matching, i.e., a matching that covers all but at most $r-2$ vertices from each part of $H$.
In~\cite{kuhnmatchings} it is shown that this result is tight as the minimum degree condition $\delta'_{r-1}(H) \geq n/r$ cannot be reduced. 
\begin{thm}[Kühn and Osthus~\cite{kuhnmatchings}]
\label{theorem:02-09-kuhn-matchings}
Let $H$ be an $r$-uniform $r$-partite hypergraph with parts of size $n$ and $\delta'_{r-1}(H) \geq n/r$.
Then $H$ has a matching that covers all but at most $r-2$ vertices in each part of $H$.
\end{thm}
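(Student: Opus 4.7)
My plan is to prove the statement by a maximum-matching argument followed by an augmentation analysis. Take a maximum matching $M$ in $H$, and for each $i=1,\dots,r$ let $U_i := V_i \setminus V(M)$ denote the uncovered vertices of part $V_i$. Because $H$ is $r$-partite and every edge of $M$ uses exactly one vertex from each part, we have $|U_1|=\cdots=|U_r|=:s$, and the claim asserts $s \leq r-2$. I proceed by contradiction, assuming $s \geq r-1$.

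The first step is a direct use of the minimum-degree hypothesis. Pick any legal $(r-1)$-set $\hat e \subseteq U_2 \cup \cdots \cup U_r$ consisting of one uncovered vertex from each of $V_2,\ldots,V_r$; this is possible since $|U_i| \geq r-1$ for every $i$. By hypothesis, $|N_{r-1}(\hat e)| \geq n/r$. If $N_{r-1}(\hat e)$ were to meet $U_1$ at some vertex $u_1$, then $\{u_1\} \cup \hat e$ would be an edge disjoint from $V(M)$, contradicting the maximality of $M$. Hence every $V_1$-neighbor of $\hat e$ lies in $V_1 \setminus U_1 = V_1 \cap V(M)$, so at least $n/r$ edges of $M$ are ``blocked'' by $\hat e$.

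The heart of the proof is to convert this local obstruction into a genuine matching augmentation. I would set up the auxiliary bipartite digraph $B$ whose left side consists of legal $(r-1)$-sets $\hat e \subseteq U_2 \cup \cdots \cup U_r$ and whose right side is $M$, with an arc $\hat e \to e$ whenever the $V_1$-vertex of $e$ lies in $N_{r-1}(\hat e)$. Each $\hat e$ has out-degree $\geq n/r$. The basic swap move is: given $\hat e \to e$ with $V_1$-vertex $w$ of $e$, replace $e \in M$ by $\{w\}\cup \hat e$; this preserves $|M|$ but shifts the deficiency from $\hat e$ to the other $r-1$ vertices of $e$, which now sit in parts $V_2,\ldots,V_r$. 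Iterating such swaps along a carefully chosen walk in $B$, I would aim to relocate the uncovered configuration until, by the minimum-degree condition applied repeatedly, one forces an edge entirely inside the currently uncovered vertices, giving a true enlargement of $M$ and the desired contradiction.

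The main obstacle, which is really the content of the Kühn--Osthus argument, is closing the counting so that $\delta'_{r-1}(H) \geq n/r$ is exactly the right threshold; in particular, keeping track of how the uncovered set moves under iterated swaps without spoiling the $(r-1)$-set extraction used at the next step. This is why the bound drops from a perfect matching to leaving $r-2$ vertices uncovered in each part: one needs enough slack in each $U_i$ to always extract a fresh legal $(r-1)$-set, and the loss of $r-2$ per part is exactly the amount the augmentation procedure cannot avoid before the minimum-degree hypothesis fails to apply.
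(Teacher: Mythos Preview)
The paper does not give its own proof of this theorem; it is quoted from K\"uhn and Osthus and used as a black box (the paper's contribution is the generalization in Lemma~\ref{lemma:02-09-lemma-3} for $t\geq 2$). So there is no in-paper proof to compare against. That said, your proposal is not a proof but an unfinished sketch: you set up a swap move and then say ``iterating such swaps \ldots\ I would aim to \ldots'', and you explicitly identify ``closing the counting'' as an obstacle you have not resolved. As written this is a genuine gap.

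More to the point, the iterated-swap picture is the wrong mechanism here. The actual K\"uhn--Osthus argument is a single pigeonhole, not a walk in an auxiliary digraph. Assuming $s\geq r-1$, one chooses \emph{simultaneously} $r$ pairwise disjoint legal $(r-1)$-sets $\hat e_1,\ldots,\hat e_r$ of uncovered vertices, with $\hat e_i$ missing part $V_i$; this is exactly where $s\geq r-1$ is used, since each part must supply $r-1$ distinct uncovered vertices. By maximality of $M$, $N_{r-1}(\hat e_i)\subseteq V_i\cap V(M)$, so each $\hat e_i$ selects at least $\lceil n/r\rceil$ edges of $M$ via their $V_i$-vertex. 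Summing gives at least $r\lceil n/r\rceil\geq n>n-s=|M|$ selections, so two distinct sets $\hat e_i,\hat e_j$ select the same edge $e\in M$, necessarily at distinct coordinates $i\neq j$. Replacing $e$ by the two disjoint edges $\hat e_i\cup(e\cap V_i)$ and $\hat e_j\cup(e\cap V_j)$ enlarges $M$, a contradiction. Your approach, by focusing on a single $(r-1)$-set and trying to ``walk'' the deficiency around, never accumulates the global overcount that forces two hits on one matching edge; that is why your counting does not close.
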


As mentioned before, we seek to generalize Theorem~\ref{theorem:aharoni-matchings-upper-bound} to $t$-shallow hitting edge sets (cf.\ Theorem~\ref{theorem:02-09-upper-bound} below).
To this end, we first show in Lemma~\ref{lemma:02-09-lemma-3} below a generalization of Theorem~\ref{theorem:02-09-kuhn-matchings} to $t$-shallow edge sets that are not necessarily hitting but cover almost all vertices of each part.
To show Lemma~\ref{lemma:02-09-lemma-3}, we consider any $r$-uniform $r$-partite hypergraph $H$ with large minimum degree $\delta'_{r-1}(H)$ and a $t$-shallow edge set $M$ satisfying certain properties, leaving as few vertices as possible uncovered.
We then deduce properties of such a hypergraph $H$ (cf.\ Lemma~\ref{lemma:02-09-lemma-2} below) and show that these lead to a contradiction in Lemma~\ref{lemma:02-09-lemma-3}.
Using Lemma~\ref{lemma:02-09-lemma-3} and raising the minimum degree condition by $1$, we can show that there exists a $t$-shallow hitting edge set in hypergraphs with large minimum degree $\delta'_{r-1}(H)$.
In the first step, we show some properties of $t$-shallow hitting edge sets in $r$-uniform $r$-partite hypergraphs.

\begin{lem}
    \label{lemma:02-09-lemma-1}
    Let $r \geq 2$ and $t \geq 2$ be positive integers and $H=(V_1 \dot\cup \cdots \dot\cup V_r, E)$ be an $r$-uniform $r$-partite hypergraph with parts of size $n$.
    Let $M$ be a $t$-shallow edge set that satisfies the following properties.
    Here, let $n_{s,i}$ be the number of vertices $v \in V_i$ with $\degree_M(v)=s$.
    \begin{enumerate}[label = (\arabic*)]
        \item\label{itm:02-09-lemma-1-property-M-1} There exists a non-negative integer $n_0$ with $n_{0,1} = n_{0,2} = \cdots = n_{0,r} = n_0$.
        \item\label{itm:02-09-lemma-1-property-M-2} $|M| \leq \left\lceil n/k \right\rceil (t-1) + n - n_0$.
    \end{enumerate}
    Then, with $k=(r-1)t+1$, all of the following holds.
    \begin{enumerate}[label = (\Alph*)]
        \item\label{itm:02-09-lemma-1-res-1}
        $n_{t,i} \leq \lceil n/k \rceil$ for all $i=1,2,\dots,r$.
        
        \item\label{itm:02-09-lemma-1-res-2}
        If there exists a vertex $v \in V_i$ with $2 \leq \degree_M(v) < t$, then $n_{t,i} \leq \lceil n/k \rceil - 1$.
        
        \item\label{itm:02-09-lemma-1-res-3}
        If $|M| < \lceil n/k \rceil (t-1) + n - n_0$, then $n_{t,i} \leq \lceil n/k \rceil - 1$ for all $i=1,2,\dots,r$.
        
        \item\label{itm:02-09-lemma-1-res-4}
        If $|M| = \lceil n/k \rceil (t-1) + n - n_0$ and $|\{v \in e \mid \degree_M(v) \geq 2\}| \leq 1$ for all $e \in M$, then $n_{t,i}=\lceil n/k \rceil$ for all $i=1,2,\dots,r$.
    \end{enumerate}
\end{lem}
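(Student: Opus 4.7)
The plan is to reduce all four claims to a single per-part incidence identity, and then for~\ref{itm:02-09-lemma-1-res-4} add one global count that exploits the edge condition.

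\textbf{The key identity.} Since $H$ is $r$-uniform and $r$-partite, every edge of $M$ meets each part $V_i$ in exactly one vertex. Double-counting the edge-vertex incidences inside $V_i$ and using property~\ref{itm:02-09-lemma-1-property-M-1} yields, for every $i\in\{1,\dots,r\}$,
\[
|M|=\sum_{s=1}^{t} s\,n_{s,i} \quad\text{and}\quad n-n_0=\sum_{s=1}^{t} n_{s,i},
\]
whose difference is
\begin{equation}\label{eq:master-plan}
(t-1)\,n_{t,i}+\sum_{s=2}^{t-1}(s-1)\,n_{s,i}=|M|-(n-n_0).
\end{equation}
Items~\ref{itm:02-09-lemma-1-res-1},~\ref{itm:02-09-lemma-1-res-2}, and~\ref{itm:02-09-lemma-1-res-3} fall out directly. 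For~\ref{itm:02-09-lemma-1-res-1}, drop the non-negative sum on the LHS of~\eqref{eq:master-plan} and apply property~\ref{itm:02-09-lemma-1-property-M-2}. For~\ref{itm:02-09-lemma-1-res-2}, a vertex $v\in V_i$ with $2\le\degree_M(v)\le t-1$ forces $\sum_{s=2}^{t-1}(s-1)n_{s,i}\ge 1$, hence $(t-1)n_{t,i}\le \lceil n/k\rceil(t-1)-1$, and integrality of $n_{t,i}$ gives $n_{t,i}\le\lceil n/k\rceil-1$. For~\ref{itm:02-09-lemma-1-res-3}, strict inequality in~\ref{itm:02-09-lemma-1-property-M-2} combined with $|M|$ being an integer shrinks the RHS of~\eqref{eq:master-plan} by at least $1$, and the same bound follows.

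\textbf{Part~\ref{itm:02-09-lemma-1-res-4}} is the main obstacle and is where the edge condition enters. I would aim to prove that $V:=\sum_i\sum_{s=2}^{t-1} n_{s,i}=0$, because then~\eqref{eq:master-plan} immediately gives $n_{t,i}=\lceil n/k\rceil$ for every $i$. The edge condition $|\{v\in e\mid\degree_M(v)\ge 2\}|\le 1$, summed over $e\in M$, yields $\sum_i\sum_{s=2}^{t} s\,n_{s,i}\le|M|$, equivalently $\sum_i n_{1,i}\ge(r-1)|M|$. Writing $T:=\sum_i n_{t,i}$ and substituting into $\sum_i n_{1,i}+V+T=r(n-n_0)$ produces
\[
V+T\le (n-n_0)-(r-1)(t-1)\lceil n/k\rceil.
\]
On the other hand, summing~\eqref{eq:master-plan} over $i$ gives $(t-1)T+U=r(t-1)\lceil n/k\rceil$ where $U:=\sum_i\sum_{s=2}^{t-1}(s-1)n_{s,i}\le(t-2)V$ (trivially for $t=2$), which rearranges to $V+T\ge r\lceil n/k\rceil+V/(t-1)$. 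Sandwiching the two bounds and using the algebraic identity $(r-1)(t-1)+r=k$ gives
\[
\frac{V}{t-1}\le (n-n_0)-k\lceil n/k\rceil\le -n_0\le 0,
\]
which forces $V=0$. Then $U=0$, the summed~\eqref{eq:master-plan} gives $T=r\lceil n/k\rceil$, and together with~\ref{itm:02-09-lemma-1-res-1} we conclude $n_{t,i}=\lceil n/k\rceil$ for every $i$. The crux is the cancellation $(r-1)(t-1)+r=k$ that makes the sandwich close up; everything else is routine double-counting.
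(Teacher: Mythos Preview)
Your proof is correct. Parts~\ref{itm:02-09-lemma-1-res-1}--\ref{itm:02-09-lemma-1-res-3} are handled exactly as in the paper, just packaged more compactly through the single identity~\eqref{eq:master-plan}.

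For part~\ref{itm:02-09-lemma-1-res-4} you and the paper use the same two ingredients---the per-part identity and the consequence $\sum_i n_{1,i}\ge(r-1)|M|$ of the edge condition---together with the cancellation $(r-1)(t-1)+r=k$. The difference is in organization: the paper argues by contradiction, assuming some $n_{t,j}\le\lceil n/k\rceil-1$, and splits into the cases $t=2$ and $t\ge 3$ to show $n_{\ge 2,j}\ge\lceil n/k\rceil+1$ before reaching the numerical contradiction $k\lceil n/k\rceil+1\le n-n_0$. Your sandwich argument is more direct and avoids the case split entirely; it even yields the side information $n_0=0$ and $k\mid n$ for free (which the paper only extracts later, in the proof of Lemma~\ref{lemma:02-09-lemma-3}). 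One small simplification: once $V=0$ you have $\sum_{s=2}^{t-1}(s-1)n_{s,i}=0$ for each $i$ individually, so~\eqref{eq:master-plan} gives $n_{t,i}=\lceil n/k\rceil$ directly, without passing through $T$ and~\ref{itm:02-09-lemma-1-res-1}.
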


\begin{proof}
    \begin{enumerate}[label = (\Alph*)]
        \item
        For all $i=1,2,\dots,r$, there exist $n-n_0-n_{t,i}$ vertices in $V_i$ that are covered at least once by $M$ but less than $t$ times.
        Thus, $|M| \geq t n_{t,i} + (n - n_0 - n_{t,i}) = (t-1) n_{t,i} + n - n_0$.
        By comparing this inequality with Property~\ref{itm:02-09-lemma-1-property-M-2}, we get $n_{t,i} \leq \lceil n/k \rceil$.
        
        \item
        Using the same argument, we have $|M| \geq t n_{t,i} + (n - n_0 - n_{t,i}) + 1 = (t-1) n_{t,i} + n - n_0 + 1$.
        By comparing this inequality with Property~\ref{itm:02-09-lemma-1-property-M-2}, we get $n_{t,i} \leq \lceil n/k \rceil - 1/(t-1) < \lceil n/k \rceil$ and thus $n_{t,i} \leq \lceil n/k \rceil - 1$.
        
        \item
        Using the same argument, we have $|M| \geq (t-1) n_{t,i} + n - n_0$.
        Since $|M| < \lceil n/k \rceil (t-1) + n - n_0$ it holds that $n_{t,i} < \lceil n/k \rceil$ and thus $n_{t,i} \leq \lceil n/k \rceil - 1$.
        
        \item 
        Let $n_s := n_{s,i}$ for all $s=1,2,\dots,t$ and some arbitrary $i \in \{1,2,\dots,r\}$, and let $n_{\geq 2}=n_2 + n_3 + \cdots + n_t$.
        Our first goal is to upper-bound $n_1$ in the general case $n_t \leq \lceil n/k \rceil$ and in the case $n_t \leq \lceil n/k \rceil - 1$.
        For the general case, we upper-bound the size of $M$ by $|M| \leq n_1 + t n_{\geq 2} = (t-1) n_{\geq 2} + n - n_0$ and with $|M|=\lceil n/k \rceil (t-1) + n - n_0$ it follows that $n_{\geq 2} \geq \lceil n/k \rceil$.
        Thus, we have $n_1 = n - n_0 - n_{\geq 2} \leq n - n_0 - \lceil n/k \rceil$.
        
        In the next step, we show that if $n_t \leq \lceil n/k \rceil - 1$ then $n_{\geq 2} \geq \lceil n/k \rceil + 1$.
        First note that $t \geq 3$ since for $t = 2$ it holds that $\lceil n/k \rceil + n - n_0 = |M|=n_1 + 2 n_2 = n - n_0 + n_2 < \lceil n/k \rceil + n - n_0$, a contradiction.
        For $t \geq 3$, we have
        \begin{equation*}
        \begin{split}
            \left\lceil \frac{n}{k} \right\rceil (t-1) + n - n_0 
            &= |M| = n - n_0 + n_2 + 2 n_3 + \cdots + (t-1) n_t\\
            &\leq n - n_0 + (t-2) (n_2 + n_3 + \cdots + n_{t-1}) + (t-1) n_t
        \end{split}
        \end{equation*}
        and thus
        \begin{equation*}
            (t-1) \left( \left\lceil \frac{n}{k} \right\rceil - n_t \right) \leq (t-2) (n_2 + n_3 + \cdots + n_{t-1})~.
        \end{equation*}
        It follows that 
        \begin{equation*}
            n_{\geq 2} \geq n_t + \frac{t-1}{t-2} \left( \left\lceil \frac{n}{k} \right\rceil - n_t \right)
            = \frac{1}{t-2} \left( (t-1) \left\lceil \frac{n}{k} \right\rceil - n_t \right)
            > \left\lceil \frac{n}{k} \right\rceil
        \end{equation*}
        and therefore $n_1 \leq n - n_0 - \lceil n/k \rceil - 1$ if $n_t \leq \lceil n/k \rceil - 1$.
        
        Now, we prove the claim.
        Assume, for contradiction, that there exists a part $V_j$ with $n_{t,j} \leq \lceil n/k \rceil - 1$.
        Then, $n_{1,i} \leq n - n_0 - \lceil n/k \rceil$ for all $i=1,2,\dots,r$ and $n_{1,j} = n - n_0 - (n_{2,j}+n_{3,j}+\cdots+n_{t,j}) \leq n - n_0 - \lceil n/k \rceil - 1$ since $n_{t,j} \leq \lceil n/k \rceil - 1$ and thus $n_{2,j}+n_{3,j}+\cdots+n_{t,j} \geq \lceil n/k \rceil + 1$.
        In the next step, observe that
        \begin{equation*}
            (r-1) |M| \leq \sum_{i=1}^r n_{1,i}~,
        \end{equation*}
        since each edge in $M$ covers at least $(r-1)$ vertices that are covered once by $M$.
        This follows since each edge $e \in M$ has at most one vertex $v \in e$ with $\degree_M(v) \geq 2$.
        We bound the $n_{1,i}$'s in the sum and use $|M| = \lceil n/k \rceil (t-1) + n - n_0$ and obtain
        \begin{equation*}
            (r-1)\left(\left\lceil \frac{n}{k} \right\rceil (t-1) + n - n_0 \right) \leq \sum_{i=1}^r n_{1,i} \leq r \left(n - n_0 - \left\lceil \frac{n}{k} \right\rceil \right) - 1~.
        \end{equation*}
        Simplifying this inequality and using $k=(r-1)t+1$ and we get
        \begin{equation*}
            \left\lceil \frac{n}{k} \right\rceil k + 1 \leq n - n_0~,
        \end{equation*}
        a contradiction since $\lceil n/k \rceil \cdot k \geq n$ and $n_0 \geq 0$.\qedhere
    \end{enumerate}
\end{proof}

As the next step, we show some properties of $t$-shallow edge sets with specific other properties that maximize the number of covered vertices.
The intuition for the next lemma is, if the hypergraph $H$ has large minimum degree but no $t$-shallow edge set (with specific properties) that covers almost all vertices of each part, then $H$ is not far from the hypergraph constructed in Theorem~\ref{theorem:02-09-lower-bound}.
In the following, given a hypergraph $H=(V,E)$ and a subset of edges $M \subseteq E$, we define $V(M) \subseteq V$ to be the union of all edges in $M$.
\begin{lem}
\label{lemma:02-09-lemma-2}
Let $r \geq 2$ and $t \geq 2$ be positive integers and $H=(V_1 \dot\cup \cdots \dot\cup V_r, E)$ be an $r$-uniform $r$-partite hypergraph with parts of size $n$ and minimum degree $\delta'_{r-1}(H) \geq \lceil n/k \rceil$ where $k=(r-1)t+1$.
Let $M$ be a $t$-shallow edge set that satisfies the following properties and maximizes the number of covered vertices in $H$ with respect to these properties.
Here, let $n_{s,i}$ be the number of vertices $v \in V_i$ with $\degree_M(v)=s$.
\begin{enumerate}[label = (\arabic*)]
    \item\label{itm:02-09-lemma-2-property-M-1} There exists a non-negative integer $n_0$ such that $n_{0,1} = n_{0,2} = \cdots = n_{0,r} = n_0$.
    \item\label{itm:02-09-lemma-2-property-M-2} $|M| \leq \left\lceil n/k \right\rceil (t-1) + n - n_0$.
\end{enumerate}
If $n_0 \geq r-1$ then each of the following holds.
\begin{enumerate}[label = (\Alph*), start = 5]
    \item\label{itm:02-09-lemma-2-res-1}
    $N_{r-1}(\hat e) \subseteq V(M)$ for each legal $(r-1)$-set $\hat e$ of uncovered vertices of $H$.
    \item\label{itm:02-09-lemma-2-res-3}
    $|M|=\lceil n/k \rceil (t-1) + n - n_0$.
    \item\label{itm:02-09-lemma-2-res-2}
    For all edges $e \in M$ it holds that the number of vertices $v \in e$ that are covered at least twice by $M$ is at most one, i.e., $|\{v \in e \mid \degree_M(v) \geq 2\}| \leq 1$ for all edges $e \in M$.
\end{enumerate}
\end{lem}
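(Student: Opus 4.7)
My plan is to establish each of (A), (B), (C) by contradiction, in each case constructing from $M$ a $t$-shallow edge set $M'$ that still satisfies properties~(1) and~(2) but covers strictly more vertices than $M$. I shall prove them in the order (A), (B), (C), with (B) relying on (A) and (C) relying on both.

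For (A), a single augmentation suffices. If some legal $(r-1)$-set $\hat e$ of uncovered vertices admits a neighbor $v \in N_{r-1}(\hat e) \setminus V(M)$, then $M' = M \cup \{\hat e \cup \{v\}\}$ is $t$-shallow (all $r$ newly incident vertices were uncovered in $M$, hence now have degree exactly $1$), satisfies property~(1) with $n'_0 = n_0 - 1$ (each part loses exactly one uncovered vertex), and $|M'| = |M|+1 \leq \lceil n/k \rceil(t-1) + n - n'_0$ preserves property~(2), contradicting maximality.

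For (B), suppose $|M| < \lceil n/k \rceil(t-1) + n - n_0$. Lemma~\ref{lemma:02-09-lemma-1}(C) then gives $n_{t,i} \leq \lceil n/k \rceil - 1$ for every $i$. Using $n_0 \geq r-1$, I preselect $r-1$ distinct originally-uncovered vertices from each part and distribute them into $r$ legal $(r-1)$-sets $\hat e_1, \dots, \hat e_r$, where $\hat e_i$ contains exactly one vertex in each $V_j$ with $j \neq i$. Processing $i = 1, \dots, r$ in order, the vertices of $\hat e_i$ are still uncovered at step $i$ (they were never used earlier); part~(A) forces $N_{r-1}(\hat e_i) \subseteq V(M)$; the previous steps only raise degrees in $V_i$ from $0$ to $1$ (the increments to $t$-shallow saturation happen in parts $V_j$ with $j \neq i$), so $|N_{r-1}(\hat e_i) \cap V_i| \geq \lceil n/k \rceil > n_{t,i}$ and we may pick $v_i \in N_{r-1}(\hat e_i) \cap V_i$ with current degree strictly below $t$. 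Adding the edges $\hat e_i \cup \{v_i\}$ yields $M'$ with $|M'|=|M|+r$ and $n'_0 = n_0 - (r-1)$, which preserves (1) and (2) and strictly increases the coverage.

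For (C), suppose some $e^* \in M$ has two vertices $u_1, u_2$ with $\degree_M(u_j) \geq 2$. If every other vertex of $e^*$ also has degree at least $2$ in $M$, then $M \setminus \{e^*\}$ covers exactly the same vertices as $M$, satisfies property~(1) with the same $n_0$, has $|M \setminus \{e^*\}| = |M| - 1$, and the $r$-step augmentation from the proof of (B) extends it to an $M''$ covering strictly more vertices than $M$, contradicting maximality. In the general case, the degree-$1$ vertices of $e^*$ become uncovered upon removal and are distributed across only some of the parts, so one must combine the removal of $e^*$ with carefully chosen compensating edge additions (their non-$(r-1)$-set endpoints located inside $V(M)$ by part~(A)) that jointly cover the newly uncovered vertices of $e^*$ together with additional originally-uncovered vertices, restoring the uniform per-part balance required by property~(1). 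The main obstacle I anticipate is exactly this bookkeeping step: selecting the compensating edges so that the final edge set simultaneously preserves $t$-shallowness, satisfies property~(2), and strictly improves on the coverage of $M$, despite the asymmetric uncovering caused by removing $e^*$.
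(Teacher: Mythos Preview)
Your arguments for parts (E) and (F) are essentially the paper's proof (the paper picks all $v_i$ simultaneously rather than sequentially, but this is cosmetic). The gap is in part (G): you correctly treat the special case where \emph{every} vertex of $e^*$ has degree at least $2$, but you leave the general case as an acknowledged obstacle, and your case split does not naturally extend.

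The paper resolves (G) in one stroke, without your case distinction. Let $I = \{\,i : \deg_M(u_i) \geq 2\,\}$ with $|I| \geq 2$. One adds \emph{only} $|I|$ new edges $e_i = \hat e_i \cup \{v_i\}$, one for each $i \in I$, where the $\hat e_i$ are pairwise disjoint legal $(r-1)$-sets of uncovered vertices from $U \setminus U_i$, and then sets $M' = (M \setminus \{e^*\}) \cup \{e_i : i \in I\}$. The bookkeeping that worried you falls out cleanly: for $i \in I$ the vertex $u_i$ stays covered and $|I|-1$ of the edges $e_j$ hit $U_i$, while for $i \notin I$ the vertex $u_i$ (which had degree exactly~$1$) becomes uncovered, but now all $|I|$ edges $e_j$ hit $U_i$; in both cases $|U'_i| = n_0 - |I| + 1$, so property~(1) is preserved with $n'_0 = n_0 - |I| + 1$, and $|M'| = |M| + |I| - 1 = |M| + (n_0 - n'_0)$ gives property~(2). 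There is one further subtlety you did not anticipate: to find $v_i$ one must allow $v_i = u_i$, because when $\deg_M(u_i) = t$ it may happen that $N_{r-1}(\hat e_i)$ is entirely contained in the degree-$t$ set $W_i$; this is handled by invoking Lemma~\ref{lemma:02-09-lemma-1}\ref{itm:02-09-lemma-1-res-2} (not just \ref{itm:02-09-lemma-1-res-1}) to get $|W_i \setminus \{u_i\}| \leq \lceil n/k \rceil - 1$ in either case, so $N_{r-1}(\hat e_i)$ always contains a vertex in $(V_i \setminus (U_i \cup W_i)) \cup \{u_i\}$, and choosing $v_i = u_i$ is harmless for $t$-shallowness since $u_i$'s degree first drops by one when $e^*$ is removed.
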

\begin{proof}
We define $U_i = V_i \setminus V(M)$ to be the set of all uncovered vertices in $V_i$.
Moreover, we define $U = U_1\cup U_2 \cup \cdots \cup U_r$.
\begin{enumerate}[label = (\Alph*), start = 5]
    \item
    Let $\hat e$ be a legal $(r-1)$-set of uncovered vertices, that is $\hat e \subseteq U$.
    Assume that there exists a vertex $v \in U \cap N_{r-1}(\hat e)$.
    Let $e=\hat e \cup \{v\}$.
    Then, the edge set $M'=M \cup \{e\}$ is $t$-shallow.
    Let $U'_i=V_i \setminus V(M')$ be the set of vertices of $V_i$ that are not covered by $M'$.
    The edge set $M'$ satisfies Property~\ref{itm:02-09-lemma-1-property-M-1} with $n'_0 = |U'_1|=\cdots=|U'_r| = n_0 - 1$.
    Moreover, $M'$ satisfies Property~\ref{itm:02-09-lemma-1-property-M-2} since $|M'|=|M|+1=|M|+n_0-n'_0 \leq \lceil n/k \rceil (t-1) + n - n'_0$.
    Since $M'$ covers more vertices than $M$ and satisfies all properties, we have the desired contradiction.
    
    \item
    In the next step, we show that $|M|=\lceil n/k \rceil (t-1) + n - n_0$.
    Assume, for contradiction, that $|M| < \lceil n/k \rceil (t-1) + n - n_0$.
    By Lemma~\ref{lemma:02-09-lemma-1} Result~\ref{itm:02-09-lemma-1-res-3}, we have $n_{t,i} \leq \lceil n/k \rceil - 1$ for all parts $V_i$.
    Since $\delta'_{r-1}(H) \geq \lceil n/k \rceil$ and by Result~\ref{itm:02-09-lemma-2-res-1} of this lemma, each legal $(r-1)$-set $\hat e$ of vertices has a neighboring vertex that is covered at least once but less than $t$ times.
    For $i=1,2,\dots,r$, we build an $(r-1)$-set $\hat e_i$ of vertices of $U \setminus U_i$, such that no two sets $\hat e_i$ contain the same vertex.
    This is possible since $n_0 = |U_i| \geq r - 1$ for all $i = 1,2,\dots,r$.
    Let $v_i \in N_{r-1}(\hat e_i)$ be a vertex with $1 \leq \degree_M(v_i) < t$.
    Observe that $v_i \in V_i$ for all $i=1,2,\dots,r$.
    Define $e_i = \hat e_i \cup \{v_i\}$.
    We claim that
    \begin{equation*}
        M' = M \cup \{e_i \mid i=1,2,\dots,r\}
    \end{equation*}
    satisfies Property~\ref{itm:02-09-lemma-2-property-M-1} and Property~\ref{itm:02-09-lemma-2-property-M-2}.
    Clearly, $M'$ is $t$-shallow.
    Denote by $U'_i = V_i \setminus V(M')$ the set of vertices in $V_i$ that are not covered by $M'$.
    Then, $|U'_i|=|U_i|-(r-1) = n_0-r+1$ since the edges in $\{e_j \mid j \neq i, j=1,2,\dots,r\}$ cover $(r-1)$ vertices of $U_i$, but $e_i \cap U_i = \emptyset$.
    To show Property~\ref{itm:02-09-lemma-2-property-M-2}, note that $|M'|=|M|+r$ and $n'_0 := |U'_1| = \cdots = |U'_r| = n_0-r+1$.
    Thus, $|M'|=|M| + n_0 - n'_0 + 1 \leq \lceil n/k \rceil (t-1) + n - n'_0$.
    Since $M'$ covers more vertices than $M$ and satisfies all properties, we have the desired contradiction.
    
    \item
    We show that there exists no edge $e \in M$ such that at least two vertices of $e$ are covered at least twice by $M$.
    Assume that there exists an edge $e = \{u_1,u_2,\dots,u_r\} \in M$, with $u_i \in V_i$ for $i=1,2,\dots,r$, such that at least two vertices of $e$ are covered at least twice by $M$.
    Denote by $I \subseteq \{1,2,\dots,r\}$ the set of indices such that $\{u_i \mid i \in I\}$ is the set of vertices of $e$ that are covered at least twice by $M$.
    Then, $|I| \geq 2$.
    For each $i \in I$, we build a legal $(r-1)$-set $\hat e_i$ of vertices of $U \setminus U_i$, such that no two sets $\hat e_i$ contain the same vertex.
    This is possible since $n_0 = |U_i| \geq r - 1$ for all $i = 1,2,\dots,r$ and $|I| \leq r$.
    By Result~\ref{itm:02-09-lemma-1-res-1} of this lemma, $N_{r-1}(\hat e_i) \subseteq V_i \setminus U_i$.
    Denote by $W_i \subseteq V_i$ the set of vertices in $V_i$ that are covered exactly $t$ times by $M$.
    By Result~\ref{itm:02-09-lemma-1-res-1} of Lemma~\ref{lemma:02-09-lemma-1}, we have $|W_i| \leq \lceil n/k \rceil$ if $u_i \in W_i$ and, by Result~\ref{itm:02-09-lemma-1-res-2} of Lemma~\ref{lemma:02-09-lemma-1}, $|W_i| \leq \lceil n/k \rceil - 1$ if $u_i \notin W_i$, for $i \in I$.
    Since $|N_{r-1}(\hat e_i)| \geq \lceil n/k \rceil$, there exists a vertex $v_i \in N_{r-1}(\hat e_i)$ with $v_i \in (V_i \setminus (U_i \cup W_i)) \cup \{u_i\}$.
    That is, $1 \leq \degree_M(v_i) < t$ or $v_i = u_i$.
    Define $e_i = \hat e_i \cup \{v_i\}$.
    We claim that
    \begin{equation*}
        M'=(M \setminus \{e\}) \cup \{e_i \mid i \in I\}
    \end{equation*}
    satisfies Property~\ref{itm:02-09-lemma-2-property-M-1} and Property~\ref{itm:02-09-lemma-2-property-M-2}.
    Clearly, $M'$ is $t$-shallow.
    Since $|I| \geq 2$ it holds that $M'$ covers more vertices than $M$.
    Denote $U'_i = V_i \setminus V(M')$ to be the set of vertices in $V_i$ that are not covered by $M'$.
    If $i \in I$, then there exist $|I|-1$ edges in $\{e_j \mid j \in I\}$ that cover distinct vertices of $U_i$.
    Moreover, the edge $e_i$ does not cover a vertex of $U_i$.
    Thus, $|U'_i| = |U_i|-(|I|-1) = n_0 - |I| + 1$.
    If $i \notin I$, then all $|I|$ edges in $\{e_j \mid j \in I\}$ cover a vertex in $U_i$.
    But the vertex $v \in e \cap V_i$ is covered exactly once by $M$ (by the definition of $I$) and thus, $v$ is not covered by $M'$.
    Then, $|U'_i| = |U_i| - |I| + 1 = n_0 - |I| + 1$.
    To prove Property~\ref{itm:02-09-lemma-2-property-M-2}, note that $|M'|=|M|+|I|-1$ and $n'_0 := |U'_1| = \cdots = |U'_r| = n_0-|I|+1$.
    Thus, $|M'|=|M|+(n_0 -n'_0) \leq \lceil n/k \rceil (t-1) + n - n_0 + n_0 - n'_0 = \lceil n/k \rceil (t-1) + n - n'_0$.\qedhere
\end{enumerate}
\end{proof}

We can now prove that every $r$-uniform $r$-partite hypergraph $H$ with minimum degree $\delta'_{r-1}(H) \geq n/((r-1)t+1)$ has a $t$-shallow edge set that covers all but at most $r-2$ vertices of each part.
For $t=1$, the result is proven by Theorem~\ref{theorem:02-09-kuhn-matchings}, so we consider the case $t \geq 2$.
As we need these later to deduce Theorem~\ref{theorem:02-09-upper-bound}, we show the following lemma together with some additional properties of the $t$-shallow edge set.
\begin{lem}
\label{lemma:02-09-lemma-3}
Let $r \geq 2$, $t \geq 2$ and $n$ be positive integers and $H=(V_1 \dot\cup \cdots \dot\cup V_r, E)$ be an $r$-uniform $r$-partite hypergraph with parts of size $n$ and minimum degree $\delta'_{r-1}(H) \geq \lceil n/k \rceil$ where $k=(r-1)t+1$.
Then, there exists a $t$-shallow edge set $M$ with the following properties.
Here, let $n_{s,i}$ be the number of vertices $v \in V_i$ with $\degree_M(v)=s$.
\begin{enumerate}[label = (\arabic*)]
    \item\label{itm:02-09-lemma-3-property-M-1} There exists a non-negative integer $n_0$ with $n_{0,1} = n_{0,2} = \cdots = n_{0,r} = n_0$.
    \item\label{itm:02-09-lemma-3-property-M-2} $|M| \leq \left\lceil n/k \right\rceil (t-1) + n - n_0$.
    \item\label{itm:02-09-lemma-3-property-M-3} $M$ covers all but at most $r-2$ vertices of each part of $H$, i.e., $n_0 \leq r-2$.
\end{enumerate}
\end{lem}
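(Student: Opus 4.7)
The plan is to take $M$ to be a $t$-shallow edge set satisfying properties~\ref{itm:02-09-lemma-3-property-M-1} and~\ref{itm:02-09-lemma-3-property-M-2} that maximizes the number of covered vertices; such an $M$ exists since $M = \emptyset$ satisfies both conditions with $n_0 = n$. I will then show that this $M$ automatically satisfies property~\ref{itm:02-09-lemma-3-property-M-3}, by assuming $n_0 \geq r-1$ and deriving a contradiction from the rigid structural conclusions of Lemmas~\ref{lemma:02-09-lemma-1} and~\ref{lemma:02-09-lemma-2}.

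Under the assumption $n_0 \geq r-1$, Lemma~\ref{lemma:02-09-lemma-2} (Results~\ref{itm:02-09-lemma-2-res-3} and~\ref{itm:02-09-lemma-2-res-2}) gives $|M| = \lceil n/k \rceil(t-1) + n - n_0$ and that each edge of $M$ contains at most one vertex of degree at least two. Plugging these into Result~\ref{itm:02-09-lemma-1-res-4} of Lemma~\ref{lemma:02-09-lemma-1} then yields $n_{t,i} = \lceil n/k \rceil$ for every part $V_i$. Combining the identity $|M| = \sum_{s \geq 1} s\, n_{s,i}$ (valid because each edge of $M$ has exactly one vertex in $V_i$) with $\sum_{s \geq 1} n_{s,i} = n - n_0$ forces $n_{s,i} = 0$ for all $2 \leq s \leq t-1$ (vacuous when $t=2$), so every vertex of $H$ has degree $0$, $1$, or $t$ in $M$.

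The final step is a double-count of incidences. Partition $M$ into \emph{matching edges}, whose vertices all have degree $1$ in $M$, and \emph{non-matching edges}, each of which, by the structure just established, contains exactly one vertex of degree $\geq 2$, necessarily of degree $t$. Since each of the $r\lceil n/k \rceil$ degree-$t$ vertices of $H$ lies in exactly $t$ non-matching edges, and each non-matching edge is counted once in this sum, the number of non-matching edges equals $tr\lceil n/k \rceil$. Using $k = (r-1)t + 1$, the number of matching edges then computes to $|M| - tr\lceil n/k \rceil = n - n_0 - \lceil n/k \rceil \cdot k \leq -n_0 < 0$, the desired contradiction.

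The main obstacle is securing the rigid structure of $M$ under the assumption $n_0 \geq r-1$: the exact size of $M$, the vanishing of intermediate degrees, and the exact count $n_{t,i} = \lceil n/k \rceil$ in each part. Once these three ingredients are in hand via Lemmas~\ref{lemma:02-09-lemma-1} and~\ref{lemma:02-09-lemma-2}, the contradiction is forced by the arithmetic identity $tr - t + 1 = k$, which collapses the counts of matching and non-matching edges against $|M|$.
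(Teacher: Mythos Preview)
Your proof is correct and follows essentially the same approach as the paper: choose $M$ maximizing the number of covered vertices subject to Properties~\ref{itm:02-09-lemma-3-property-M-1} and~\ref{itm:02-09-lemma-3-property-M-2}, assume $n_0 \geq r-1$, and invoke Lemmas~\ref{lemma:02-09-lemma-1} and~\ref{lemma:02-09-lemma-2} to obtain $|M| = \lceil n/k\rceil(t-1)+n-n_0$, the ``at most one high-degree vertex per edge'' property, and $n_{t,i} = \lceil n/k\rceil$ for all $i$. The only difference is the final double-count: the paper counts vertices in a single part $V_1$ (observing $n_{1,1} \geq t(r-1)\lceil n/k\rceil$ and then $n \geq n_0 + n_{1,1} + n_{t,1} \geq n_0 + k\lceil n/k\rceil$), whereas you first deduce $n_{s,i}=0$ for $2 \leq s \leq t-1$ and then count matching versus non-matching edges globally. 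Both routes collapse via the same identity $(r-1)t+1=k$, so this is a cosmetic variation rather than a different argument.
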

\begin{proof}
Let $M$ be a $t$-shallow edge set that satisfies Property~\ref{itm:02-09-lemma-3-property-M-1} and Property~\ref{itm:02-09-lemma-3-property-M-2} and maximizes the number of covered vertices with respect to these both properties.
Assume, for contradiction, that $n_0 \geq r-1$.
Then, we can apply Lemma~\ref{lemma:02-09-lemma-2}.
By Result~\ref{itm:02-09-lemma-2-res-3} of Lemma~\ref{lemma:02-09-lemma-2}, we have $|M|=\lceil n/k \rceil (t-1)+n-n_0$.
By Result~\ref{itm:02-09-lemma-2-res-2} of Lemma~\ref{lemma:02-09-lemma-2}, every edge $e \in M$ has at most one vertex $v \in e$ with $\degree_M(v) \geq 2$.
Then, by Result~\ref{itm:02-09-lemma-1-res-4} of Lemma~\ref{lemma:02-09-lemma-1}, we have $n_{t,i}=\lceil n/k \rceil$ for all $i=1,2,\dots,r$.
We count the number of vertices in the part $V_1$.
Note that $n_{t,1}=\lceil n/k \rceil$.
Since $n_{t,i}=\lceil n/k \rceil$ for all $i=2,3,\dots,r$ and every edge in $M$ has at most one vertex $v$ with $\degree_M(v) \geq 2$, we have $n_{1,1} \geq t (r-1) \lceil n/k \rceil$.
Thus,
\begin{equation*}
    n \geq n_0 + n_{1,1} + n_{t,1}
    \geq n_0 + t (r-1) \left\lceil \frac{n}{k} \right\rceil + \left\lceil \frac{n}{k} \right\rceil
    \geq n_0 + k \left\lceil \frac{n}{k} \right\rceil
    \geq n_0 + n~,
\end{equation*}
a contradiction since we assumed $n_0 \geq r-1 > 0$.
\end{proof}

We are now able to prove that every $r$-uniform $r$-partite hypergraph with parts of size $n$ and minimum degree $\delta'_{r-1}(H) \geq 1 + n/((r-1)t+1)$ has a $t$-shallow hitting edge set.
This condition is tight up to the additive constant $1$, as shown in Theorem~\ref{theorem:02-09-lower-bound}.
\begin{thm}
\label{theorem:02-09-upper-bound}
Let $r \geq 2$ and $t \geq 2$ be positive integers and $H=(V_1 \dot\cup \cdots \dot\cup V_r, E)$ be an $r$-uniform $r$-partite hypergraph with parts of size $n$ and minimum degree
\begin{equation*}
    \delta'_{r-1}(H) \geq \left\lceil \frac{n}{(r-1)t+1} \right\rceil + 1~.
\end{equation*}
Then, there exists a $t$-shallow hitting edge set in $H$.
\end{thm}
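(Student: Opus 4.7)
I would prove the theorem by contradiction, extending the contradiction argument that closes the proof of Lemma~\ref{lemma:02-09-lemma-3}. First apply Lemma~\ref{lemma:02-09-lemma-3} (whose hypothesis $\delta'_{r-1}(H) \geq \lceil n/k \rceil$ with $k = (r-1)t+1$ is weaker than ours) to obtain a $t$-shallow edge set $M$ satisfying its Properties~(1) and~(2) and maximizing the number of covered vertices among all such sets; the lemma forces the common value $n_0$ of the $n_{0,i}$'s to satisfy $n_0 \leq r-2$. If $n_0 = 0$, then $M$ is the desired hitting set and we are done, so assume $1 \leq n_0 \leq r-2$ for contradiction.

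The core of the argument is to re-establish the three results~(E),~(F),~(G) of Lemma~\ref{lemma:02-09-lemma-2} using only $n_0 \geq 1$ together with the sharpened bound $\delta'_{r-1}(H) \geq \lceil n/k \rceil + 1$, in place of the hypothesis $n_0 \geq r-1$ used there. Result~(E), $N_{r-1}(\hat e) \subseteq V(M)$ for every legal $(r-1)$-set $\hat e$ of uncovered vertices, already only uses $n_0 \geq 1$ in its proof. For Result~(F), if $|M| < \lceil n/k \rceil(t-1) + n - n_0$, Lemma~\ref{lemma:02-09-lemma-1}(C) gives $n_{t,i} \leq \lceil n/k \rceil - 1$, so the extra $+1$ supplies at least two vertices of degree $<t$ in every relevant neighborhood; I would build two new edges rather than the $r$ new edges used in the proof of Lemma~\ref{lemma:02-09-lemma-2}, one with a legal $(r-1)$-set of uncovered vertices from $V_2, \ldots, V_r$ and a covered low-degree vertex in $V_1$, and a second with an uncovered vertex in $V_1$, a covered low-degree vertex in $V_2$, and $r-2$ already-covered low-degree vertices in $V_3, \ldots, V_r$, chosen so that after both additions every $n_{0,i}$ decreases by exactly $1$, preserving Property~(1), while Property~(2) survives from the strict gap. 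This contradicts maximality. Result~(G) is handled by an analogous exchange that replaces a violating edge by two new edges.

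With results~(E)--(G) in place, Lemma~\ref{lemma:02-09-lemma-1}(D) yields $n_{t,i} = \lceil n/k \rceil$ for every $i$, and the counting argument that closes the proof of Lemma~\ref{lemma:02-09-lemma-3} then applies verbatim: in $V_1$ one obtains
\[
    n \geq n_0 + n_{1,1} + n_{t,1} \geq n_0 + t(r-1)\lceil n/k \rceil + \lceil n/k \rceil = n_0 + k\lceil n/k \rceil \geq n_0 + n,
\]
contradicting $n_0 \geq 1$.

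The main obstacle is the adaptation of results~(F) and~(G): when $n_0$ is small one cannot use the $r$ disjoint $(r-1)$-sets of uncovered vertices on which the original proof of Lemma~\ref{lemma:02-09-lemma-2} relies, and the construction of the two replacement edges has to combine a single uncovered vertex with $r-2$ already-covered low-degree vertices in each edge; the role of the $+1$ in $\delta'_{r-1}(H)$ is exactly to supply, in every such neighborhood, the extra usable covered neighbor that plays the part previously played by the missing uncovered vertex.
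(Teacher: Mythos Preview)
Your strategy differs from the paper's: you argue by contradiction that the maximizing $M$ must already have $n_0=0$, while the paper argues constructively, taking any maximizing $M$ (with $n_0\le r-2$ by Lemma~\ref{lemma:02-09-lemma-3}) and \emph{augmenting} it to a $t$-shallow hitting set by adding $n_0+1$ further edges, no longer caring whether Properties~(1)--(2) survive.

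There is a genuine gap in your adaptation of Result~(F). Your second edge is meant to consist of an uncovered vertex $u_1\in U_1$ together with covered low-degree vertices $w_2,\ldots,w_r$. To realise it as an edge of $H$ via the co-degree bound you must fix a legal $(r-1)$-set, say $\hat f=\{u_1,w_3,\ldots,w_r\}$, and then pick $w_2\in N_{r-1}(\hat f)$ that is both covered and of degree~$<t$. But $\hat f$ is \emph{not} a set of uncovered vertices, so Result~(E) does not apply and nothing rules out uncovered vertices of $V_2$ in $N_{r-1}(\hat f)$. The extra $+1$ together with Lemma~\ref{lemma:02-09-lemma-1}(C) guarantees at least $(\lceil n/k\rceil+1)-(\lceil n/k\rceil-1)=2$ neighbours of degree~$<t$, yet after your first edge there remain $n_0-1$ uncovered vertices in $V_2$, and once $n_0\ge 3$ these may swallow both. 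Hence for $3\le n_0\le r-2$ (so already for $r\ge 5$) your two-edge exchange need not preserve Property~(1), and the contradiction collapses. The sketch for~(G) has the same defect.

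The paper avoids this entirely: because $n_0\le r-2$, it can build $n_0+1$ legal $(r-1)$-sets $\hat e_1,\ldots,\hat e_{n_0+1}$, each consisting \emph{only} of uncovered vertices, arranged so that every uncovered vertex lies in at least one and at most two of them. Since each $\hat e_i$ is all-uncovered, maximality of $M$ forces $N_{r-1}(\hat e_i)\subseteq V_i\setminus U_i$; then the $+1$ together with $n_{t,i}\le\lceil n/k\rceil$ (Lemma~\ref{lemma:02-09-lemma-1}(A)) yields a covered $v_i$ of degree~$<t$, and $M'=M\cup\{\hat e_i\cup\{v_i\}:i\le n_0+1\}$ is directly the desired $t$-shallow hitting set.
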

\begin{proof}
Let $k=(r-1)t+1$.
By Lemma~\ref{lemma:02-09-lemma-3}, there exists a $t$-shallow edge set $M$ that satisfies Property~\ref{itm:02-09-lemma-3-property-M-1}, Property~\ref{itm:02-09-lemma-3-property-M-2} and Property~\ref{itm:02-09-lemma-3-property-M-3} of Lemma~\ref{lemma:02-09-lemma-3}.
Let $M$ be such a $t$-shallow edge set that maximizes the number of covered vertices with respect to all three properties.
Let $U_i = V_i \setminus V(M)$ be the set of uncovered vertices in $V_i$.
Then, $|U_1|=\cdots=|U_r|=n_0$ for some non-negative integer $n_0 \leq r-2$, by Property~\ref{itm:02-09-lemma-3-property-M-1} and Property~\ref{itm:02-09-lemma-3-property-M-3}.
Moreover, $|M| \leq \lceil n/k \rceil (t-1) + n-n_0$ by Property~\ref{itm:02-09-lemma-3-property-M-2}.
Let $n_{s,i}$ be the number of vertices $v \in V_i$ with $\degree_M(v)=s$.
By Lemma~\ref{lemma:02-09-lemma-1} Result~\ref{itm:02-09-lemma-1-res-1}, we have $n_{t,i} \leq \lceil n/k \rceil$ for all $i=1,2,\dots,r$.
For $n_0 = 0$, there is nothing to show.
Therefore, assume $n_0 > 0$.
We construct a $t$-shallow hitting edge set $M'$ by adding edges to $M$.

For each $i=1,2,\dots,n_0+1$, we build a legal $(r-1)$-set $\hat e_i$ of vertices of $U \setminus U_i$, such that each vertex in $U_1\cup U_2 \cup \cdots \cup U_{n_0 + 1}$ is contained in exactly one $\hat e_i$ and each vertex in $\hat U_{n_0 + 2} \cup \cdots \cup U_{r}$ is contained in at least one and at most two $\hat e_i$'s.
Note that $N_{r-1}(\hat e_i) \subseteq V_i \setminus U_i$, since otherwise we can build a $t$-shallow edge set $M'$ that covers more vertices and satisfies all three properties of Lemma~\ref{lemma:02-09-lemma-3}, a contradiction.
Since $\delta'_{r-1}(H) \geq \lceil n/k \rceil + 1$ and $n_{t,i} \leq \lceil n/k \rceil$ for all $i=1,2,\dots,r$, there exists a vertex $v_i \in N_{r-1}(\hat e_i)$ that is covered at least once but less than $t$ times by $M$.
We define $e_i = \hat e_i \cup \{v_i\}$.
Let $M' = M \cup \{e_i \mid i=1,2,\dots,n_0 + 1\}$.
Then, $M'$ is $t$-shallow and covers all vertices of $H$.
\end{proof}

\section{Conclusions}
\label{sec:conclusions}

Open problems include all cases in which our bounds are not tight yet.
The most intriguing is to determine the least integer $t = t(r)$ for which every regular $r$-uniform hypergraph admits a $t$-shallow hitting edge set.
Our results give that $t(r) \geq \lfloor \log_2 (r+1) \rfloor$ and $t(r) \leq \euler r(1+o(1))$, where we can improve the upper bound to $c\ln r (1+o(1))$ for $c \approx 4.31$ (i.e., nearly matching the lower bound) for the special case of regular $r$-uniform hypergraphs of girth at least four.

Let us also mention that in the first author's Master's thesis~\cite{Planken-msc} it is shown that it is NP-complete to determine whether an $r$-uniform $r$-partite regular hypergraph has a $t$-shallow hitting edge set, provided $t \leq \frac12 \log_2(r) - O(\log \log r)$, i.e., only slightly below the smallest value for $t$ for which such $t$-shallow hitting edge sets \emph{always} exist.

\bibliographystyle{plainurl}
\bibliography{literatur}

\end{document}